\documentclass{article}

\usepackage[T1]{fontenc}
\usepackage{lmodern}
\usepackage{vmargin}
\setmarginsrb{28mm}{25mm}{28mm}{25mm}{0pt}{0mm}{0pt}{0mm}
\usepackage{authblk}
\usepackage{amssymb}
\usepackage{amsmath}
\usepackage{amsthm}
\usepackage{graphicx}
\usepackage{tikz}
\usepackage{bbm}
\usepackage{subcaption}
\usepackage[ruled,linesnumbered]{algorithm2e}
\usepackage{mathtools}
\mathtoolsset{showonlyrefs}
\usepackage{lipsum}
\usepackage[title,titletoc]{appendix}
\usepackage{booktabs}
\usepackage{here}
\usepackage[colorlinks=true,linkcolor=blue,citecolor=red]{hyperref}
\usepackage[numbers]{natbib}
\usepackage{enumerate}
\usepackage{enumitem}
\usepackage{color}
\usepackage[normalem]{ulem}
\usepackage{dsfont}
\usepackage{bm}
\usepackage{nicefrac}
\usepackage{todonotes}

\newcommand{\N}{{\mathbb{N}}} %
\newcommand{\R}{{\mathbb{R}}} %

\DeclareSymbolFont{bbold}{U}{bbold}{m}{n}
\DeclareSymbolFontAlphabet{\mathbbold}{bbold}
\newcommand{\ind}{{\mathbbold{1}}}

\newcommand{\calE}{{\mathcal{E}}}

\newcommand{\calH}{{\mathcal{H}}}

\newcommand{\calO}{{\mathcal{O}}}

\newcommand{\calS}{{\mathcal{S}}}

\newcommand{\calV}{{\mathcal{V}}}
\newcommand{\calW}{{\mathcal{W}}}
\newcommand{\calX}{{\mathcal{X}}}

\newcommand{\dif}{d} %

\newcommand{\abs}[1]{\left\lvert#1\right\rvert}

\newcommand{\norm}[1]{\left\lVert#1\right\rVert}

\newcommand{\inp}[2]{\left \langle #1, #2 \right \rangle}

\newcommand{\Exp}[1]{\mathbb{E}\left[#1\right]}
\newcommand{\Var}[1]{\operatorname{Var}\left(#1\right)}
\newcommand{\Prob}[1]{\mathbb{P}\left(#1\right)}

\newcommand{\err}{\ensuremath{e}}

\newcommand{\yb}{\ensuremath{\boldsymbol{y}}}

\newcommand{\zb}{\ensuremath{\boldsymbol{z}}}

\newcommand{\tol}{\ensuremath{\mathrm{tol}}} %
\newcommand{\pde}{\ensuremath{\mathrm{PDE}}}

\newcommand{\adj}{\ensuremath{\ast}}

\newtheorem{theorem}{Theorem}[section]
\newtheorem{proposition}{Proposition}[section]
\newtheorem{lemma}{Lemma}[section]
\newtheorem{corollary}{Corollary}[section]
\newtheorem{remark}{Remark}[section]
\newtheorem{assumption}{Assumption}
\newtheorem{definition}{Definition}[section]

\SetKwInput{Input}{Input}
\SetKwInput{Output}{Output}

\definecolor{mygreen}{rgb}{0,0.4,0}

\title{On the sample complexity of the active subspace method}
\author[$\ast$]{F. Nobile}
\author[$\ast$]{M. Raviola}
\author[$\dag$]{R. Tempone}
\affil[$\ast$]{{\footnotesize CSQI Chair, \'{E}cole Polytechnique F\'{e}d\'{e}rale de Lausanne, Switzerland}}
\affil[$\dag$]{{\footnotesize KAUST, Saudi Arabia}}
\date{\today}

\begin{document}

\maketitle
\begin{abstract}
    Active subspaces identify low-dimensional linear structure in high-dimensional parameter-to-output maps by estimating the dominant eigenspace of a gradient covariance operator.
    In practice this covariance is replaced by a Monte Carlo estimator built from a limited number of gradient evaluations.
    Classical analyses based on controlling the covariance error in operator norm lead to sample-complexity estimates that can be substantially more pessimistic than the sampling rules commonly used in computations.
    This paper studies the empirical active subspace method directly in the projection-error metric relevant for ridge approximation.
    We derive non-asymptotic quasi-optimality bounds governed by a {regularized} inverse Christoffel function associated with the gradient field.
    Under a bounded-gradient assumption, the resulting estimates already improve the sample-complexity estimates obtained from operator-norm covariance bounds.
    We then show that additional smoothness of the gradient map, expressed through membership in a reproducing kernel Hilbert space, yields sharper coherence estimates and motivates tractable importance sampling from kernel diagonal measures.
    Furthermore, {the} same smoothness {assumption yields} a priori decay bounds for the population
    active subspace tail energy, which can be combined with {our} finite-sample {estimate} to
    prescribe rank, {regularization} scale, and sample size, allowing to fully characterize the a priori sample complexity.
    The abstract assumptions are verified for lognormal Gaussian and affine uniform parametric elliptic PDEs using weighted {summability of} Hermite and Legendre {series expansions}.
    \medskip

    \noindent{\textbf{Keywords:}  uncertainty quantification, active subspace method, elliptic PDEs with random data}

\end{abstract}

\clearpage
\thispagestyle{empty}
\tableofcontents

\clearpage
\pagenumbering{arabic}
\setcounter{page}{1}

{
\section{Introduction}
\label{sec:introduction}

Parametric and random partial differential equations (PDEs) arise naturally in uncertainty quantification when coefficients, loads, geometries, or boundary data are not known exactly.
For a scalar quantity of interest, such a model induces a parameter-to-output map $\yb\mapsto f(\yb)$, where $\yb$ ranges over a parameter domain $\Gamma$ endowed with a probability measure $\mu$.
The parameter dimension $d$ may be large, or even countably infinite when a distributed input is represented through a countable expansion.
The efficient approximation of such maps can therefore be hindered by the ambient dimension of the parameter space.
This has motivated methods that exploit low-dimensional or sparse structure in parametric models, including sparse polynomial approximation \cite{cohen2015approximation,bachmayr2017sparsei,bachmayr2017sparseii}, basis adaptation for polynomial chaos expansions \cite{tsilifis2017reduced,tsilifis2019compressive}, and active subspaces \cite{constantine2015active}.

This work studies the active subspace method, which seeks a low-dimensional linear projection of the input variables that captures the dominant variation of $f$.
We work on finite or countably infinite product parameter domains equipped with probability measures for which a suitable Sobolev calculus is available.
In its standard form, the method is based on the gradient covariance operator
\begin{equation*}
  C
  =
  \int_\Gamma
  \nabla f(\yb)\otimes\nabla f(\yb)
  \dif\mu(\yb).
\end{equation*}
The dominant eigenspaces of $C$ identify directions along which $f$ has large average squared directional derivative.
For a finite-rank orthogonal projector $\Pi$, the population gradient projection error is defined by
\begin{equation*}
  \err(\Pi)^2
  = \mathbb{E}\left[\norm{(I-\Pi) \nabla f(Y)}^2_{\ell^2}\right]
  =
  \operatorname{tr}\left((I-\Pi)C\right),
\end{equation*}
where $Y$ is a random variable with law $\mu$.
A minimizer of this error over all rank-$r$ projectors is the spectral projector $\Pi_r$ associated with the $r$ largest eigenvalues of $C$ and its range is the rank-$r$ active subspace of $f$.
Then $\err(\Pi_r)$ is the rank-$r$ population active subspace error, while $\err(\Pi_r)^2$ is equivalently the active subspace tail energy.
Under suitable conditional Poincar\'e inequalities, the active subspace error also controls the error of the ideal ridge approximation of $f$, that is the best approximation by a function of the projected variables \cite{constantine2017near,zahm2020gradient}.

In practice, the gradient covariance operator $C$ and its spectral decomposition cannot be computed exactly.
One therefore replaces $C$ by a weighted empirical covariance $\hat C$ constructed from $M$ independent gradient samples and computes the corresponding rank-$r$ empirical spectral projector $\hat\Pi_r$.
We allow the samples to be drawn from a probability measure $\rho$ with $\mu\ll\rho$, using the importance weight $w=\dif\mu/\dif\rho$.

Given a tolerance $\tol>0$, this formulation leads to two questions:
\begin{enumerate}
  \item the approximation question: whether the population active subspace error can be made smaller than $\tol$ using a moderate rank $r$;
  \item the sampling question: after choosing such a rank, how many gradient samples are sufficient for $\hat\Pi_r$ to have population error comparable to $\err(\Pi_r)$.
\end{enumerate}
We first address the sampling question for a fixed rank and sampling measure.
We then use smoothness assumptions for the gradient field $\nabla f$ to control both the rank required to reach $\tol$ and the corresponding sufficient sampling condition.

\subsection{Active subspace recovery}

Existing analyses of empirical active subspaces estimate the covariance operator or its dominant eigenspaces using concentration and perturbation arguments \cite{constantine_gleich_2014_computing,holodnak2018probabilistic,lam2020multifidelity}.
To obtain an active subspace error of order $\tol$, this route requires covariance accuracy of order $\tol^2/r$.
Under standard bounded-gradient assumptions, covariance concentration then gives a sample-size dependence of order $r^2\tol^{-4}$, in addition to covariance-scale and intrinsic-dimension factors.
This guarantee is considerably more conservative than the practical rule of thumb that approximately $r\log d$ gradient samples often suffice \cite{constantine2015active}.

We instead analyze the population error of the empirical active subspace directly.
Our main sampling result is a non-asymptotic quasi-optimality estimate of the form
\begin{equation*}
  \err(\hat\Pi_r)^2
  \leq
  c\,\err(\Pi_r)^2
  +
  R_{r,\lambda},
\end{equation*}
where $\lambda>0$ is a regularization scale.
  {To distinguish covariance regularization from ridge approximation, we use the adjective \emph{regularized} for quantities defined through the shift $C+\lambda I_{\mathcal R}$.}
  {The sufficient sampling condition is governed by a regularized inverse Christoffel coherence, which measures the largest regularized leverage of a sampled gradient.}

Suppose that $r$ is chosen so that $\err(\Pi_r)\leq\tol$ and set $\lambda=\tol^2/r$, making the remainder $R_{r,\lambda}$ of order $\tol^2$.
Sampling directly from $\mu$ under a uniform gradient bound gives the sufficient condition
\begin{equation*}
  M
  \gtrsim
  r\tol^{-2}\log(1+r),
\end{equation*}
up to gradient-dependent constants and probability factors.
Minimizing this coherence over all sampling measures gives an oracle importance measure with a sufficient sample size of order $r\log r$.
Its density, however, depends on the unknown gradient covariance.
To obtain implementable importance measures with improved tolerance dependence, we assume that scalar projections of $\nabla f$ belong to a reproducing kernel Hilbert space (RKHS).
If the eigenvalues of the corresponding kernel integral operator are $\ell^\theta$-summable for some $\theta\in(0,1]$, kernel diagonal sampling gives
\begin{equation*}
  M
  \gtrsim
  r^\theta\tol^{-2\theta}\log(1+r),
\end{equation*}
up to regularity-dependent constants and probability factors.
Thus, for a fixed target rank, smoothness improves the dependence on $\tol$ from $\tol^{-2}$ to $\tol^{-2\theta}$.

Related summability assumptions on the same RKHS regularity also address the approximation question by giving algebraic decay
\begin{equation*}
  \err(\Pi_r)^2
  \lesssim
  r^{-q},
\end{equation*}
where $q>0$ depends on the RKHS spectral summability and additional summability across parameter directions, resulting in an overall sample complexity of $M \gtrsim \tol^{-2\theta(1+1/q)}$, up to logarithmic factors.

We verify these assumptions for linear quantities of interest associated with lognormal Gaussian and affine uniform elliptic PDE models \cite{bachmayr2017sparseii,bachmayr2017sparsei}.
In the lognormal Gaussian case, our analysis leads to an a priori ridge-error bound with twice the decay exponent of the ridge approximation based on the best-ranked coordinates according to the coefficients of the input field expansion.
We hence prove that allowing for output-adapted arbitrary input linear combinations in the ridge approximation can improve the decay exponent by a factor of two for a class of functions arising from parametric elliptic PDEs.

\subsection{Other related work}
The active subspace method has been studied directly in infinite-dimensional settings; it has been formulated for scalar functionals on Hilbert spaces \cite{kundu2026active}, while a Sobolev framework for derivative-informed dimension reduction over infinite-dimensional Gaussian input measures has been developed in the context of operator learning \cite{luo2025dimension}.

Empirical active subspaces can be seen as an uncentered PCA problem for the Hilbert-valued gradient, and direct reconstruction-error bounds for empirical PCA in separable Hilbert spaces are well established \cite{rudi2013sample,reiss2020nonasymptotic,milbradt2020high}.
These results assume observations arise from a fixed distribution and do not provide relative error guarantees under designable ridge-leverage sampling.
Our sampling analysis is more closely related to ridge-leverage sampling results that preserve the reconstruction error of every rank-$r$ linear subspace \cite{musco2020pcp}.
For finite matrices, it has been shown in \cite{cohen2017ridge} that ridge-leverage sampling at a scale determined by the best rank-$r$ residual preserves the reconstruction error of every rank-$r$ linear subspace, however their theorem does not cover importance-weighted samples of a general Hilbert-valued field.
Related regularized leverage embeddings for integral and continuous operators appear in \cite{bach2017equivalence,avron2019universal}, but their conclusions concern random-feature approximation or regularized signal reconstruction rather than quasi-optimality of the top-$r$ eigenspace of the unregularized empirical covariance.
Analogous results for continuous operators in the random-feature setting preserve reconstruction errors under ridge-leverage sampling, but are specialized to that construction \cite{musco2017kernel}.

The PDE component in this paper builds on weighted Hermite and Legendre coefficient estimates for lognormal Gaussian and affine uniform elliptic equations, originally developed to establish sparse polynomial approximation rates \cite{cohen2015approximation,HoangSchwab2014,bachmayr2017sparsei,bachmayr2017sparseii,cohen2023near}.

\subsection{Contributions and outline}

The main contributions are as follows.
\begin{itemize}
  \item
        We prove a finite-sample quasi-optimality theorem controlling the population error of the empirical rank-$r$ active subspace by the rank-$r$ population active subspace error for importance-weighted samples of a measurable Hilbert-valued gradient field; see Theorem~\ref{thm:slas-ridge-quasi-opt}.
        The sufficient sampling condition is governed by a {regularized} inverse Christoffel coherence, leading to the bounded-gradient and oracle sample-complexity regimes described above.

  \item
        We show that RKHS smoothness replaces the coherence growth $\lambda^{-1}$ by $\lambda^{-\theta}$, and we construct {regularized} kernel diagonal and $\theta$-kernel diagonal importance measures which lead to better sampling conditions than those in the bounded-gradient regime; see Lemma~\ref{prop:smoothness-ridge-coherence}, Proposition~\ref{prop:smoothness-ridge-diagonal-sampling}, and Corollary~\ref{prop:smoothness-diagonal-sampling}.

  \item
        We use related smoothness assumptions to prove algebraic decay of the active subspace tail energy and obtain joint tolerance-dependent prescriptions for rank, {regularization} scale, and sample size; see Proposition~\ref{prop:smoothness-active-subspace-tail} and Theorem~\ref{thm:smoothness-tolerance-complexity}.

  \item
        We translate weighted Hermite and Legendre regularity estimates for lognormal Gaussian and affine uniform elliptic PDE models into the smoothness assumptions required by the active subspace analysis; see Theorems~\ref{thm:lognormal-pde-smoothness-consequences} and~\ref{thm:uniform-pde-smoothness-consequences}.
        In the lognormal Gaussian case, this yields an a priori ridge-error bound with twice the decay exponent of the ridge approximation based on the best-ranked coordinates according to the coefficients of the input field expansion.
\end{itemize}

The paper is organized as follows.
Section~\ref{sec:slas} introduces the active subspace formulation, its empirical importance sampling version, and its connection with ridge approximation.
Section~\ref{sec:slanal} proves the finite-sample quasi-optimality theorem and compares it with covariance-estimation approaches.
Section~\ref{sec:sample-complexity-smoothness} derives smoothness-based coherence estimates, importance sampling measures, and active subspace tail bounds.
Section~\ref{sec:parametric-pdes-smoothness} verifies the smoothness assumptions for lognormal Gaussian and affine uniform elliptic PDE models.
Section~\ref{sec:numerical-experiments} complements the analysis with numerical illustrations focused on the lognormal elliptic PDE benchmark.
}

\section{The active subspace method}
\label{sec:slas}

In this section, we review the active subspace method for possibly infinite-dimensional parameter spaces and introduce most of the notation used throughout the paper.
Let $d\in\N\cup\{+\infty\}$.
If $d<+\infty$, we define the parameter space as $\Gamma=\R^d$ with its Borel $\sigma$-algebra.
If $d=+\infty$, we set $\Gamma=\R^\N$ with the product topology and its Borel $\sigma$-algebra.
In both cases, we let $\mu$ be a Borel probability measure on $\Gamma$.
    {For} $m=1,\ldots,d$, {we} denote {by $\pi_m:\Gamma\to\R$} the {$m$th} coordinate {map, so that $\pi_m(\yb)=y_m$}.
When $d=+\infty$, coordinate-indexed sequences and sums are understood over $m\in\N$.
Throughout, $\yb\in\Gamma$ denotes a generic parameter value.
    {A $\Gamma$-valued random parameter is denoted by $Y$, with its law specified by context; independent} random samples are denoted by $Y_1,\ldots,Y_M$, and their realizations by $\yb_1,\ldots,\yb_M$.
We denote by $\ell^2(d)$ the space of admissible directions for parameter perturbations.
Note that, in general, a random parameter $Y\sim\mu$ need not itself belong to $\ell^2(d)$ when $d=+\infty$.
For the standard Gaussian product measure, $\ell^2(d)$ is the Cameron--Martin space.

Since the active subspace method relies on gradients, we first define a first-order Sobolev structure via the closure of the cylindrical gradient, that is the standard gradient operator for functions which are smooth and depend on a finite number of coordinates.
Let $\mathcal C^\infty_{\mathrm{cyl}}$ be the space of functions of the form
\begin{equation}
    h(\yb)
    =
    \phi\left({\pi_{i_1}}(\yb),\ldots,{\pi_{i_k}}(\yb)\right),
    \qquad
    \phi\in C_b^\infty(\R^k),
\end{equation}
where $k<+\infty$, $i_1,\ldots,i_k$ are coordinate indices, and $C_b^\infty(\R^k)$ denotes the space of bounded infinitely differentiable functions on $\R^k$ {with bounded partial derivatives of all orders}.
For such a function, set
\begin{equation}
    \label{eq:cylindrical-gradient}
    \nabla_0 h(\yb)
    :=
    \sum_{j=1}^k
    \partial_j\phi\left({\pi_{i_1}}(\yb),\ldots,{\pi_{i_k}}(\yb)\right)e_{i_j}
    \in\ell^2(d),
\end{equation}
where $\{e_m\}$ is the canonical orthonormal basis of $\ell^2(d)$ {and} ${\partial_j\phi}$ {denotes differentiation with respect to the} ${j}${th argument of} ${\phi}$.
We assume that this assignment defines a densely defined closable operator
\begin{equation}
    \nabla_0:
    \mathcal C^\infty_{\mathrm{cyl}}
    \subset L^2_\mu(\Gamma)
    \longrightarrow
    L^2_\mu(\Gamma;\ell^2(d)).
\end{equation}
We denote its closure by $\nabla$ and define $H^1_\mu(\Gamma)$ as the domain of $\nabla$ equipped with the norm
\begin{equation}
    \label{eq:H1-closed-gradient-definition}
    \norm{h}_{H^1_\mu(\Gamma)}^2
    :=
    \norm{h}_{L^2_\mu(\Gamma)}^2
    +
    \norm{\nabla h}_{L^2_\mu(\Gamma;\ell^2(d))}^2.
\end{equation}
Thus, the Sobolev space is defined once the cylindrical gradient is known to be closable.
When $d<+\infty$, a sufficient condition for closability is that $\mu$ admit a strictly positive $C^1$ density with respect to Lebesgue measure.
In this case, $H^1_\mu(\R^d)$ coincides with the standard first-order weighted Sobolev space; see \cite[Corollary~1.2]{tolle2012uniqueness}.
For the standard Gaussian infinite-product measure and the infinite product of centered uniform measures on $[-1,1]$, closability is proved in Appendix~\ref{app:product-sobolev-calculus}.
In these two cases, the resulting {Sobolev} space {and gradient operator admit} the coordinate characterization
\begin{equation}
    \label{eq:coordinate-weak-derivative-characterization}
    H^1_\mu(\Gamma)
    =
    \left\{
    h\in L^2_\mu(\Gamma)
    :
    (\partial_mh)_{m=1}^d
    \in L^2_\mu(\Gamma;\ell^2(d))
    \right\},
    \qquad
    \nabla h=(\partial_mh)_{m=1}^d,
\end{equation}
where $\partial_mh$ is the weak partial derivative along the $m$th coordinate.
    {The latter is defined as follows.
        For the Gaussian product measure, a function $g_m\in L^2_\mu(\Gamma)$ is called the $m$th weak derivative of $h\in L^2_\mu(\Gamma)$ if}
\begin{equation}
    {\label{eq:gaussian-weak-derivative}
        \int_\Gamma g_m\Phi\dif\mu
        =
        \int_\Gamma
        h\left(\pi_m\Phi-\partial_m\Phi\right)
        \dif\mu}
\end{equation}
{for every $\Phi\in\mathcal C^\infty_{\mathrm{cyl}}$.
For the uniform product measure, $g_m$ is called the $m$th weak derivative of $h$ if}
\begin{equation}
    {\label{eq:uniform-weak-derivative}
        \int_\Gamma g_m\Phi\dif\mu
        =
        -
        \int_\Gamma h\partial_m\Phi\dif\mu}
\end{equation}
{for every cylindrical test function $\Phi$ whose dependence on the $m$th coordinate is compactly supported in $(-1,1)$.}
Throughout the paper, {$f$ denotes the target parameter-to-output function to which} we {apply the active subspace method.
        We} assume that $f\in H^1_\mu(\Gamma)$ and that its gradient is nonzero in $L^2_\mu(\Gamma;\ell^2(d))$.
    {Since the gradient field is central to the subsequent analysis, we} write
\begin{equation}
    \label{eq:gradient-field-notation}
    g:=\nabla f
    \in L^2_\mu(\Gamma;\ell^2(d))\setminus\{0\}.
\end{equation}

We next define the linear observations used to form reduced variables.
Let $c_{00}(d)\subset\ell^2(d)$ be the space of finitely supported sequences, with $c_{00}(d)=\R^d$ when $d<+\infty$.
For $a\in c_{00}(d)$, set
\begin{equation}
    \label{eq:finite-linear-observable}
    {X_a}
    :=
    \sum_{m=1}^d {a_m\pi_m},
\end{equation}
where the sum is finite.
We assume that {the measure} ${\mu}$ {satisfies, for some constant} ${C_\mu}<+\infty$,
\begin{equation}
    \label{eq:linear-observable-bound}
    \norm{{X_a}}_{L^2_\mu(\Gamma)}^2
    \leq
    {C_\mu}\norm{a}_{\ell^2(d)}^2,
    \qquad
    a\in c_{00}(d).
\end{equation}
{Consequently, for} $v\in\ell^2(d)$, {the following limit exists in $L^2_\mu(\Gamma)$, and} we write
\begin{equation}
    \label{eq:linear-observation-functional}
    {X_v}
    :=
    L^2_\mu\text{-}\lim_{n\to+\infty}
    \sum_{m=1}^{\min\{n,d\}}{v_m\pi_m}.
\end{equation}
Here and below, $\min\{n,+\infty\}=n$.
For {the standard} Gaussian {infinite-product measure on $\Gamma$}, \eqref{eq:linear-observable-bound} holds with ${C_\mu}=1${, while} for {the infinite product of} centered uniform {probability measures} on $[-1,1]$ it holds with ${C_\mu}=1/3$.
    {More generally, under the standing closability assumption}, Lemma~\ref{lem:linear-observables-H1} shows that {every measure satisfying \eqref{eq:linear-observable-bound} also satisfies}, for {every} $v\in\ell^2(d)$,
\begin{equation}
    \label{eq:gradient-linear-observable}
    {X_v}\in H^1_\mu(\Gamma),
    \qquad
    \nabla{X_v}=v.
\end{equation}
{Thus, the linear observations used as reduced variables are compatible with the Sobolev structure.}

Fix now a target dimension $r\in\N$, with $r\leq d$ when $d<+\infty$.
Let $V:\R^r\to\ell^2(d)$ be an isometry {and set} ${v_j:=V(e_j)}$ {for} ${j=1},\ldots,{r}$, so that $V^\adj V=I_r$.
We {write}
\begin{equation}
    \label{eq:active-observation-map}
    {X_V}
    :=
    \left(
    {X_{v_1}},\ldots,{X_{v_r}}
    \right)
    \in L^2_\mu(\Gamma;\R^r),
\end{equation}
{and} call {its} components the reduced variables associated with $V${.
        We fix a measurable representative of $X_V$ and write $X_V(\yb)$ for its value at $\yb$.
        When $d<+\infty$, $V$ is a $d\times r$ matrix and $X_V(\yb)$ is the usual matrix product $V^\top\yb$.
        When $d=+\infty$, $X_V$ is interpreted componentwise through the $L^2_\mu$ limits in \eqref{eq:linear-observation-functional}.
        This does not require $\yb$ to belong to $\ell^2(d)$, and statements involving $X_V(\yb)$ are understood $\mu$-almost everywhere}.
The corresponding orthogonal projector on $\ell^2(d)$ is
\begin{equation}
    \Pi_V:=VV^\adj.
\end{equation}
We denote by
\begin{equation}
    \mathcal G_V
    :=
    \overline{\sigma({X_V})}^{\,\mu}
\end{equation}
the $\mu$-completion of the $\sigma$-algebra generated by ${X_V}$, and by
$\mu_V:=({X_V})_\#\mu$ its law on $\R^r$.
If $Q\in\R^{r\times r}$ is orthogonal, then
\begin{equation*}
    {X_{VQ}}=Q^\adj {X_V}
    \qquad
    \mu\text{-almost surely}.
\end{equation*}
Consequently, $\mathcal G_V$ and $\mu_V$ up to the corresponding orthogonal change of coordinates depend only on the range of $V$ or, equivalently, on $\Pi_V$.

Let us now describe the ridge-approximation problem which underlies the active subspace method.
Given an isometry $V$, the idea is to construct a surrogate of $f$ by {composing} ${X_V}$ {with} a regressor depending only on $r$ inputs, namely a ridge {function} of the form
\begin{equation}
    \label{eq:approx-model}
    \varphi({X_V}),
    \qquad
    \varphi\in L^2_{\mu_V}(\R^r).
\end{equation}
The corresponding ideal ridge-approximation problem then is
\begin{equation}
    \label{eq:general-AS-obj}
    \inf_{\substack{V:\R^r\to\ell^2(d)\\ V^\adj V=I_r}}
    \inf_{\varphi\in L^2_{\mu_V}(\R^r)}
    \norm{f-\varphi({X_V})}_{L^2_\mu(\Gamma)}^2.
\end{equation}
The active subspace method avoids solving this joint nonlinear problem and selects the subspace from first-order information about $f$.

The intuition stems from the fact that the most important directions in the input space are those along which the output varies the most.
For a unit vector $v\in\ell^2(d)$, output variability can be measured by the mean-square directional sensitivity of $f$ along $v$, that is the quantity
\begin{equation}
    \label{eq:directional-sensitivity}
    \int_\Gamma
    \abs{\inp{g(\yb)}{v}_{\ell^2(d)}}^2
    \dif\mu(\yb).
\end{equation}
These sensitivities can be represented by the operator
\begin{equation}
    \label{eq:cov}
    C
    :=
    \int_\Gamma
    g(\yb)\otimes g(\yb)
    \dif\mu(\yb),
\end{equation}
where $\left(g(\yb)\otimes g(\yb)\right)v := \inp{g(\yb)}{v}_{\ell^2(d)}g(\yb)$.
Note that $C$ is self-adjoint, nonnegative, and trace class, with
\begin{equation}
    \operatorname{tr}(C)
    =
    \int_\Gamma
    \norm{g(\yb)}_{\ell^2(d)}^2
    \dif\mu(\yb).
\end{equation}
Following the active subspace literature, we refer to \eqref{eq:cov} as the gradient covariance operator, even though it is the uncentered second moment of $g$ \cite{constantine2015active}.

For an orthogonal projector $\Pi$ on $\ell^2(d)$, define the population gradient projection error $\err(\Pi)$ by
\begin{equation}
    \label{eq:active-subspace-error}
    \err(\Pi)^2
    :=
    \int_\Gamma
    \norm{(I-\Pi)g(\yb)}_{\ell^2(d)}^2
    \dif\mu(\yb)
    =
    \operatorname{tr}\left((I-\Pi)C\right).
\end{equation}
The squared error $\err(\Pi)^2$ is the discarded gradient energy and measures the mean-square directional sensitivity of $f$ along directions orthogonal to $\operatorname{Ran}(\Pi)$.
Hence, one defines a rank-$r$ active subspace projector as any solution of
\begin{equation}
    \label{eq:population-active-subspace-projector}
    \Pi_r
    \in
    \operatorname*{argmin}_{\substack{
        \Pi=\Pi^\adj=\Pi^2,\\
        \operatorname{rank}(\Pi)=r
    }}
    \err(\Pi)^2,
\end{equation}
and the corresponding active subspace as $\operatorname{Ran}(\Pi_r)$.
Let $V_r:\R^r\to\ell^2(d)$ be an isometry satisfying $\Pi_{V_r}=\Pi_r$.
Following the active subspace literature, we call the components of ${X_{V_r}}$ the active variables \cite{constantine2015active}.
When $d<+\infty$, let $W_r:\R^{d-r}\to\R^d$ be an isometry with columns $w_{r+1},\ldots,w_d$ such that
\begin{equation*}
    \operatorname{Ran}(W_r)
    =
    \operatorname{Ran}(\Pi_r)^\perp.
\end{equation*}
The components of ${X_{W_r}}$ are called the inactive variables.
When $d=+\infty$, an orthonormal basis $(w_j)_{j>r}$ of $\operatorname{Ran}(\Pi_r)^\perp$ analogously determines a countable family $({X_{w_j}})_{j>r}$ of inactive variables.
In general, this family need not define an $\ell^2$-valued random variable.

Since $C$ is positive and trace class, it admits a spectral decomposition
\begin{equation}
    \label{eq:cov-spectral-decomposition}
    C
    =
    \sum_{j=1}^{d}
    \sigma_j^2u_j\otimes u_j,
    \qquad
    \sigma_1^2\geq\sigma_2^2\geq\cdots\geq0.
\end{equation}
When $d<+\infty$, we extend the eigenvalue sequence by setting $\sigma_j=0$ for $j>d$.
Ky Fan's variational principle for positive compact operators implies that a minimizer in \eqref{eq:population-active-subspace-projector} is
\begin{equation}
    \label{eq:population-active-subspace-spectral-projector}
    \Pi_r
    =
    \sum_{j=1}^r u_j\otimes u_j,
\end{equation}
and that
\begin{equation}
    \label{eq:population-active-subspace-tail}
    \err(\Pi_r)^2
    =
    \sum_{j>r}\sigma_j^2;
\end{equation}
see \cite[Sections 1.6 and 1.8]{simon2005trace}.
Thus, $\err(\Pi_r)$ is the rank-$r$ population active subspace error, while the active subspace tail energy $\err(\Pi_r)^2$ is the residual sensitivity outside the selected subspace.

For fixed $V$, the best ridge approximation {of $f$} is the conditional expectation
$\mathbb E[f\mid\mathcal G_V]$.
By the Doob--Dynkin lemma, there exists
$\varphi_V^\star\in L^2_{\mu_V}(\R^r)$, unique up to
$\mu_V$-almost everywhere equality, such that
\begin{equation}
    \label{eq:best-g}
    \varphi_V^\star({X_V})
    =
    \mathbb E\left[f\,\middle\vert\,\mathcal G_V\right]
    \qquad
    \mu\text{-almost surely}.
\end{equation}
Thus, $\varphi_V^\star({X_V})$ is the orthogonal projection of $f$ onto $L^2_\mu(\Gamma,\mathcal G_V)$; see \cite[Chapter 9]{williams_1991}.

Let us now introduce a functional inequality that allows us to convert the squared population gradient projection error \eqref{eq:active-subspace-error} into a ridge-reconstruction guarantee.
\begin{definition}[Subspace conditional Poincar\'e inequality]
    \label{asmp:poincare}
    Let ${V:\R^r\to}\ell^2(d)$ be {an} isometry.
    We say that $\mu$ satisfies the subspace conditional Poincar\'e inequality for $\Pi{_V}$ with constant $C_P(\Pi{_V})<+\infty$ if
    \begin{equation}
        \label{eq:conditional-poincare}
        \norm{
            h-\mathbb E\left[h\,\middle\vert\,\mathcal G_V\right]
        }_{L^2_\mu(\Gamma)}^2
        \leq
        C_P(\Pi{_V})
        \norm{
            (I-\Pi{_V})\nabla h
        }_{L^2_\mu(\Gamma;\ell^2(d))}^2
    \end{equation}
    for every $h\in H^1_\mu(\Gamma)$.
\end{definition}
\noindent
{The} definition {depends only on} $\Pi{_V}$ because any two {isometries with the same range} differ by an orthogonal change of coordinates and generate the same completed $\sigma$-algebra.
The above inequality allows us to bound the mean-square error of the best ridge approximation for a given isometry in terms of the squared population gradient projection error.

\begin{theorem}[Ridge reconstruction bound]
    \label{thm:err-bound}
    Let $V:\R^r\to\ell^2(d)$ be an isometry and suppose that $\mu$ satisfies the subspace conditional Poincar\'e inequality for $\Pi_V$.
    Then
    \begin{equation}
        \label{eq:err-bound}
        \norm{
            f-\varphi_V^\star({X_V})
        }_{L^2_\mu(\Gamma)}^2
        \leq
        C_P(\Pi_V)\err(\Pi_V)^2.
    \end{equation}
    Moreover, for every $\varphi\in L^2_{\mu_V}(\R^r)$,
    \begin{align}
        \label{eq:err-approx-f}
        \norm{
            f-\varphi({X_V})
        }_{L^2_\mu(\Gamma)}^2
         & =
        \norm{
            f-\varphi_V^\star({X_V})
        }_{L^2_\mu(\Gamma)}^2
        +
        \norm{
            \varphi_V^\star-\varphi
        }_{L^2_{\mu_V}(\R^r)}^2
        \nonumber \\
         & \leq
        C_P(\Pi_V)\err(\Pi_V)^2
        +
        \norm{
            \varphi_V^\star-\varphi
        }_{L^2_{\mu_V}(\R^r)}^2.
    \end{align}
    In particular, if $V_r:\R^r\to\ell^2(d)$ satisfies $\Pi_{V_r}=\Pi_r$ and the subspace conditional Poincar\'e inequality holds for $\Pi_r$, then
    \begin{equation}
        \label{eq:ridge-bound-spectral-tail}
        \norm{
            f-\varphi_{V_r}^\star({X_{V_r}})
        }_{L^2_\mu(\Gamma)}^2
        \leq
        C_P(\Pi_r)
        \sum_{j>r}\sigma_j^2.
    \end{equation}
\end{theorem}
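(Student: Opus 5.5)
The proof uses only the definition of the subspace conditional Poincar\'e inequality and the fact that conditional expectation is an orthogonal projection in $L^2_\mu(\Gamma)$. I would proceed in three short steps.

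First, the bound \eqref{eq:err-bound}. Since $f\in H^1_\mu(\Gamma)$ and $g=\nabla f$, I apply \eqref{eq:conditional-poincare} with $h=f$. By \eqref{eq:best-g}, the left-hand side is exactly $\norm{f-\varphi_V^\star(X_V)}_{L^2_\mu(\Gamma)}^2$. The right-hand side is
\begin{equation*}
C_P(\Pi_V)\int_\Gamma\norm{(I-\Pi_V)g(\yb)}_{\ell^2(d)}^2\dif\mu(\yb),
\end{equation*}
which equals $C_P(\Pi_V)\err(\Pi_V)^2$ by the definition \eqref{eq:active-subspace-error}.

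Second, the Pythagorean identity in \eqref{eq:err-approx-f}. Fix $\varphi\in L^2_{\mu_V}(\R^r)$. The change-of-variables formula for the pushforward $\mu_V=(X_V)_\#\mu$ gives $\norm{\psi(X_V)}_{L^2_\mu(\Gamma)}=\norm{\psi}_{L^2_{\mu_V}(\R^r)}$ for every measurable $\psi$. Hence $\varphi(X_V)$ lies in $L^2_\mu(\Gamma,\mathcal G_V)$, since it is $\sigma(X_V)$-measurable and thus $\mathcal G_V$-measurable. I then split
\begin{equation*}
f-\varphi(X_V)=\bigl(f-\varphi_V^\star(X_V)\bigr)+\bigl(\varphi_V^\star-\varphi\bigr)(X_V).
\end{equation*}
The first term is orthogonal to $L^2_\mu(\Gamma,\mathcal G_V)$, because $\varphi_V^\star(X_V)$ is the orthogonal projection of $f$ onto that subspace. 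The second term lies in that subspace. Expanding the square therefore kills the cross term, and applying the change-of-variables formula to the second term gives the equality. Combining the equality with the first step gives the inequality in \eqref{eq:err-approx-f}.

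Third, \eqref{eq:ridge-bound-spectral-tail}. This is the first step with $V=V_r$ and $\Pi_{V_r}=\Pi_r$, combined with the spectral identity \eqref{eq:population-active-subspace-tail}, namely $\err(\Pi_r)^2=\sum_{j>r}\sigma_j^2$.

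The only point needing care is measure-theoretic bookkeeping. I must check that $\mathcal G_V$-measurability is preserved under the $\mu$-completion, and that the identity $\varphi_V^\star(X_V)=\mathbb E[f\mid\mathcal G_V]$ holds up to $\mu$-null sets, so that the orthogonality argument is legitimate. Both facts follow from \eqref{eq:best-g} and the Doob--Dynkin lemma as already invoked in the paper, so I do not expect a genuine obstacle.
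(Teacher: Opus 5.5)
Your proposal is correct and follows the paper's proof essentially step by step. You apply the conditional Poincar\'e inequality to $h=f$, get the Pythagorean identity from the orthogonality of $f-\varphi_V^\star(X_V)$ to $\mathcal G_V$-measurable square-integrable variables together with the change of variables under $\mu_V$, and finish with the spectral tail identity for $\err(\Pi_r)^2$.
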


\begin{proof}
    Applying \eqref{eq:conditional-poincare} to $h=f$ and using \eqref{eq:best-g} gives
    \begin{equation}
        \norm{
            f-\varphi_V^\star({X_V})
        }_{L^2_\mu(\Gamma)}^2
        \leq
        C_P(\Pi_V)
        \norm{
            (I-\Pi_V)\nabla f
        }_{L^2_\mu(\Gamma;\ell^2(d))}^2,
    \end{equation}
    which is \eqref{eq:err-bound}.
    Since
    $f-\varphi_V^\star({X_V})$ is orthogonal in $L^2_\mu(\Gamma)$
    to every $\mathcal G_V$-measurable square-integrable random variable,
    it is orthogonal to $\varphi_V^\star({X_V})-\varphi({X_V})$.
    The Pythagorean theorem and the change-of-variables identity under $\mu_V=({X_V})_\#\mu$ therefore give \eqref{eq:err-approx-f}.
    Finally, \eqref{eq:ridge-bound-spectral-tail} follows from \eqref{eq:err-bound} and \eqref{eq:population-active-subspace-tail}.
\end{proof}

The following proposition shows that the subspace conditional Poincar\'e inequality holds for Gaussian product measures with a constant independent of the dimension and the projector.
\begin{proposition}[Gaussian product measures]
    \label{prop:gaussian-conditional-poincare}
    Let $d\in\N\cup\{+\infty\}$ and let $\mu=\gamma^{\otimes d}$ be the standard Gaussian product measure on $\Gamma$.
    Then, every finite-rank orthogonal projector $\Pi$ on $\ell^2(d)$ satisfies the subspace conditional Poincar\'e inequality with
    \begin{equation}
        C_P(\Pi)=1.
    \end{equation}
\end{proposition}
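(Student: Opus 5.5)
We reduce the statement to the classical Gaussian Poincaré inequality by an orthogonal change of coordinates adapted to $\operatorname{Ran}(\Pi)$. Let $V:\R^r\to\ell^2(d)$ be an isometry with $\Pi_V=\Pi$, and complete $v_1,\ldots,v_r$ to an orthonormal basis $(v_j)_{j}$ of $\ell^2(d)$. By \eqref{eq:linear-observable-bound} with $C_\mu=1$, the observations $X_{v_j}$ are well defined. They are jointly Gaussian, centered, and satisfy $\mathbb E[X_{v_i}X_{v_j}]=\inp{v_i}{v_j}=\delta_{ij}$ (isometry of $v\mapsto X_v$ into $L^2_\mu$). Hence $Z=(X_{v_j})_j$ is an i.i.d.\ standard Gaussian sequence, and $Z_A:=X_V=(Z_1,\ldots,Z_r)$ is independent of $Z_B:=(Z_j)_{j>r}$. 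Moreover, $\yb\mapsto Z$ is a measure-preserving isomorphism of $(\Gamma,\mu)$ onto $(\R^{d},\gamma^{\otimes d})$ modulo null sets. Its inverse is $\yb=\sum_j Z_j v_j$ understood coordinatewise in $L^2_\mu$, since $\pi_m=X_{e_m}=\sum_j \inp{e_m}{v_j}Z_j$.

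First we prove \eqref{eq:conditional-poincare} for $h\in\mathcal C^\infty_{\mathrm{cyl}}$ and then extend by density. For cylindrical $h$ depending on finitely many coordinates $y_{i_1},\ldots,y_{i_k}$, one can (possibly after enlarging) choose the completed basis so that $\operatorname{span}\{e_{i_1},\ldots,e_{i_k}\}\subset\operatorname{span}\{v_1,\ldots,v_N\}$ for some finite $N$. The coordinates orthogonal to this finite-dimensional span play no role, so in the new variables $h=\psi(Z_1,\ldots,Z_N)$ with $\psi$ smooth and bounded with bounded derivatives. The chain rule gives $\inp{\nabla h}{v_j}=\partial_j\psi(Z)$, so $\norm{(I-\Pi)\nabla h}^2=\sum_{j=r+1}^N\abs{\partial_j\psi}^2$.

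By independence, $\mathbb E[h\mid\mathcal G_V]=\int\psi(Z_A,z_B)\,d\gamma^{\otimes(N-r)}(z_B)$. Conditionally on $Z_A$, $h-\mathbb E[h\mid\mathcal G_V]$ is the fluctuation of $z_B\mapsto\psi(Z_A,z_B)$ under the standard Gaussian on $\R^{N-r}$. The finite-dimensional Gaussian Poincaré inequality with constant $1$ then gives
\[
\int\abs{\psi(Z_A,z_B)-\mathbb E[h\mid\mathcal G_V]}^2d\gamma(z_B)\le\int\sum_{j>r}\abs{\partial_j\psi(Z_A,z_B)}^2d\gamma(z_B).
\]
Integrating over $Z_A$ (Fubini) yields \eqref{eq:conditional-poincare} with $C_P=1$ for cylindrical $h$.

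For general $h\in H^1_\mu(\Gamma)$, take cylindrical $h_n\to h$ in the graph norm; this is possible because $\nabla$ is the closure of $\nabla_0$. Conditional expectation is an $L^2$ contraction, and $(I-\Pi)$ is bounded, so both sides of \eqref{eq:conditional-poincare} pass to the limit.

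The main obstacle is the first step when $d=+\infty$: we must make rigorous that the rotated coordinates $Z$ are i.i.d.\ standard Gaussian and that $\mathcal G_V$ together with $\sigma(Z_B)$ generates the full $\sigma$-algebra up to null sets. For the latter, we use that each $\pi_m$ is an $L^2$-limit of finite combinations of the $Z_j$. Independence of $Z_A$ and $Z_B$ follows from Gaussianity and zero covariance. The chain-rule identification of $\inp{\nabla h}{v_j}$ with $\partial_j\psi$ is immediate in the finite-dimensional reduction, because cylindrical functions only involve finitely many coordinates and the rotation there is a genuine linear change of variables on $\R^N$.
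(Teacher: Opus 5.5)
Your proof is correct and follows the same strategy as the paper. It establishes Gaussianity of the observations $X_a$, proves the inequality for cylindrical $h$ via independence of the retained and discarded Gaussian parts together with the finite-dimensional Gaussian Poincar\'e inequality, and extends to $H^1_\mu(\Gamma)$ by closure of $\nabla_0$.

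The only difference is the choice of coordinates. You rotate to an orthonormal basis of $\operatorname{Ran}(V)+\operatorname{span}\{e_{i_1},\dots,e_{i_k}\}$ extending $v_1,\dots,v_r$, so conditioning just integrates out independent standard Gaussians and no singular covariance arises. The paper instead keeps the original coordinates and uses the residual $X_E-BX_V$, with conditional covariance $I_m-BB^\adj$.

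The global measure isomorphism you flag as the main obstacle is not needed: for each cylindrical $h$ only finitely many rotated coordinates enter, and the identity $\pi_{i_l}=\sum_{j\leq N}\inp{e_{i_l}}{v_j}_{\ell^2(d)}X_{v_j}$ is an exact finite relation $\mu$-almost surely.
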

\begin{proof}
    See Appendix~\ref{app:gaussian-conditional-poincare}.
\end{proof}

\begin{remark}[Uniform product measures]
    \label{rem:uniform-product-poincare}
    Let $\lambda=\frac12\mathcal L^1\vert_{[-1,1]}$ and $\mu=\lambda^{\otimes d}$.
    The closed-gradient Sobolev structure and the identity \eqref{eq:gradient-linear-observable} remain valid{.
            If $V$ selects coordinate directions}, {the one-dimensional Poincar\'e inequality and tensorization give the} subspace conditional Poincar\'e inequality {with $C_P(\Pi_V)=4/\pi^2$.
            For a general $V$, however, conditioning on $\mathcal G_V$ couples the coordinates: the conditional measures are uniform measures on affine sections of the cube and are no longer product measures.
            Therefore,} the one-dimensional {inequality cannot be applied by tensorization.
    When $d<+\infty$, the Poincar\'e} inequality {for convex domains yields the dimension-dependent bound $C_P(\Pi_V)\leq 4d/\pi^2$; see \cite{bebe2003}}.
    {When $d=+\infty$, a suitable uniform Poincar\'e bound for these conditional measures must instead be established separately, which} is {beyond} the {scope of this work}.
\end{remark}

The analysis in the remainder of the paper uses only the square-integrable gradient field in \eqref{eq:gradient-field-notation} and the objective \eqref{eq:active-subspace-error}.
The subspace conditional Poincar\'e inequality is needed only to convert squared population gradient projection error into a ridge-reconstruction guarantee.
We therefore focus on the covariance operator \eqref{eq:cov} and its empirical counterpart introduced next, which is the main object of study in the next section.

In practice, \eqref{eq:cov} -- which {we} refer to as population covariance -- is replaced by a sample average -- the empirical covariance -- since the true integral is typically unknown.
Let $\rho$ be a probability measure on $\Gamma$ such that $\mu\ll\rho$, and set
\begin{equation}
    \label{eq:is-weight}
    w
    :=
    \frac{\dif\mu}{\dif\rho}.
\end{equation}
Choose a Borel version of $g$ and set it equal to zero on $\{w=0\}$.
This convention does not affect weighted quantities: if two Borel versions agree $\mu$-almost everywhere, then they differ only on a set on which $w=0$ $\rho$-almost everywhere.
Then,
\begin{equation}
    C
    =
    \int_\Gamma
    w(\yb)g(\yb)\otimes g(\yb)
    \dif\rho(\yb).
\end{equation}
Given independent random samples
$Y_1,\ldots,Y_M\stackrel{\mathrm{iid}}{\sim}\rho$, define the empirical covariance operator by the importance sampling estimator
\begin{equation}
    \label{eq:emp-cov-is}
    \hat C
    :=
    \frac1M
    \sum_{i=1}^M
    w(Y_i)g(Y_i)\otimes g(Y_i).
\end{equation}
The estimator is unbiased as a trace-class-valued random operator since $\mathbb E_\rho[\hat C]=C$.
When $\rho=\mu$, one has $w=1$ almost everywhere and \eqref{eq:emp-cov-is} is the standard Monte Carlo covariance estimator.
We define the empirical active subspace projector $\hat\Pi_r$ as any leading rank-$r$ spectral projector of $\hat C$.
Equivalently, it minimizes $\operatorname{tr}((I-\Pi)\hat C)$ over rank-$r$ orthogonal projectors.
The construction is summarized in Algorithm~\ref{algo:ASM}.

\begin{algorithm}[h]
    \caption{Active subspace method}
    \label{algo:ASM}
    \Input{Gradient $g=\nabla f$, sampling measure $\rho$, importance weight $w=\dif\mu/\dif\rho$, target rank $r$, and sample size $M$.}

    Draw independent sample values
    $\{\yb_i\}_{i=1}^M$ from $\rho$.

    Evaluate $g(\yb_i)$ for $i=1,\ldots,M$.

    Form
    \begin{equation}
        \hat C
        =
        \frac1M
        \sum_{i=1}^M
        w(\yb_i)g(\yb_i)\otimes g(\yb_i).
    \end{equation}

    Compute a leading rank-$r$ spectral projector $\hat\Pi_r$ of $\hat C$.

    \Output{The empirical active subspace projector $\hat\Pi_r$.}
\end{algorithm}

If $\hat\Pi_r=\Pi_{\hat V_r}$, then the components of ${X_{\hat V_r}}$
are the empirical active variables, or reduced coordinates, for visualization, sensitivity analysis,
regression, optimization, and surrogate construction
\cite{constantine2015active}.
The next section studies how many gradient samples are sufficient for $\hat\Pi_r$ to be quasi-optimal for the population objective, namely for
\begin{equation}
    \label{eq:active-subspace-quasi-optimality-goal}
    \err(\hat\Pi_r)^2
    \lesssim
    \err(\Pi_r)^2
\end{equation}
to hold with high probability.

\section{Quasi-optimality of the empirical active subspace}
\label{sec:slanal}

We keep the notation introduced in Section~\ref{sec:slas}.
In particular, $g=\nabla f$, $C$ denotes the gradient covariance
\eqref{eq:cov}, $\hat C$ denotes the empirical covariance
\eqref{eq:emp-cov-is}, and $\err(\Pi)$ denotes the population gradient
projection error \eqref{eq:active-subspace-error}.
The reconstruction estimate in Theorem~\ref{thm:err-bound} shows that, under the
subspace conditional Poincar\'e inequality, this error controls the ideal
ridge-reconstruction error.
In this section we focus on the statistical part of the problem: the replacement
of the population active subspace projector $\Pi_r$ by the empirical projector
$\hat\Pi_r$ computed from sampled gradients.

Throughout this section, the random samples $\{Y_i\}_{i=1}^M$ are drawn independently
from the sampling measure $\rho$, and $w=\dif\mu/\dif\rho$ denotes the
importance weight.
All probability statements are with respect to this sampling procedure.
For a measurable vector field $h:\Gamma\to\ell^2(d)$, we define the empirical
weighted seminorm
\begin{equation}
    \label{eq:empirical-weighted-seminorm}
    \norm{h}_M^2
    :=
    \frac{1}{M}
    \sum_{i=1}^M
    w(Y_i)\norm{h(Y_i)}_{\ell^2(d)}^2.
\end{equation}
The empirical counterpart of the population gradient projection error $\err(\Pi)$ is then
\begin{equation}
    \label{eq:empirical-active-subspace-error}
    \hat{\err}(\Pi)^2
    :=
    \norm{(I-\Pi)g}_M^2
    =
    \operatorname{tr}\left((I-\Pi)\hat C\right).
\end{equation}
Thus, $\Pi_r$ and $\hat\Pi_r$ are characterized by the two variational problems
\begin{equation}
    \label{eq:population-empirical-projectors}
    \Pi_r
    \in
    \operatorname*{argmin}_{
    \substack{
        \Pi=\Pi^\adj=\Pi^2,\\
        \operatorname{rank}(\Pi)=r
    }
    }
    \err(\Pi)^2,
    \qquad
    \hat\Pi_r
    \in
    \operatorname*{argmin}_{
    \substack{
        \Pi=\Pi^\adj=\Pi^2,\\
        \operatorname{rank}(\Pi)=r
    }
    }
    \hat{\err}(\Pi)^2.
\end{equation}
The goal is to obtain conditions
on the number of samples $M$
under which the empirical
minimizer is quasi-optimal for the population criterion on a high-probability event, namely
\begin{equation}
    \label{eq:quasi-optimality-target}
    \err(\hat\Pi_r)^2
    \leq
    c\,\err(\Pi_r)^2
\end{equation}
with a moderate constant $c>1$.
The results below give such conditions in terms of a regularized stability
quantity associated with the sampled gradients.

Using the spectral decomposition \eqref{eq:cov-spectral-decomposition},
the corresponding rank-$r$ projector
\eqref{eq:population-active-subspace-spectral-projector},
and the tail identity \eqref{eq:population-active-subspace-tail},
we shall compare the empirical minimizer $\hat\Pi_r$ with $\Pi_r$.
We denote by
\begin{equation}
    \label{eq:covariance-range}
    \mathcal R
    :=
    \overline{\operatorname{Ran}(C)}
    =
    (\ker C)^\perp
    =
    \overline{\operatorname{span}}\{u_j:\sigma_j>0\}
\end{equation}
the covariance support space.
Then $g(\yb)\in\mathcal R$ for $\mu$-almost every $\yb$.
Indeed, if $v\in\ker C=\mathcal R^\perp$, then
\begin{equation*}
    0
    =
    \inp{Cv}{v}_{\ell^2(d)}
    =
    \int_\Gamma
    \abs{\inp{g(\yb)}{v}_{\ell^2(d)}}^2
    \dif\mu(\yb).
\end{equation*}
Since $\mathcal R^\perp$ is separable, applying this identity on a countable
dense subset of $\mathcal R^\perp$ gives
$g(\yb)\perp\mathcal R^\perp$,
and hence $g(\yb)\in\mathcal R$,
for $\mu$-almost every $\yb$.

The argument below follows the same general mechanism as randomized low-rank
approximation via ridge-leverage{-score} sampling {\cite{cohen2017ridge}}:
one first proves a uniform concentration estimate for a regularized covariance
and then converts {it} into a uniform statement for rank-$r$ projection errors.
{The regularized inverse Christoffel function is the continuous analogue of a regularized leverage score, commonly called a ridge leverage score in randomized linear algebra \cite{pauwels2018relating}.}
Let us further remark that the use of intrinsic-dimension matrix concentration in active subspace
estimation {has been used before to} estimate $C$ uniformly in operator norm; {see, e.g., \cite{lam2020multifidelity}}.

\subsection{\texorpdfstring{The {regularized} inverse Christoffel function}{The regularized inverse Christoffel function}.}

Fix $\lambda>0$ and define the regularized covariance operator on $\mathcal R$ by
\begin{equation}
    \label{eq:regularized-covariance}
    C_\lambda := C+\lambda I_{\mathcal R}.
\end{equation}
On the eigenbasis {from \eqref{eq:cov-spectral-decomposition} we have}
$C_\lambda^{-1/2}u_j=(\sigma_j^2+\lambda)^{-1/2}u_j$ for $\sigma_j>0$.
When convenient, we extend $C_\lambda^{-1/2}$ by zero on $\ker C$.
The weighted {regularized} inverse Christoffel function associated with the gradient $g$
and sampling measure $\rho$ is
\begin{equation}
    \label{eq:ridge-christoffel}
    k_{\rho,\lambda}(\yb)
    :=
    w(\yb)\,
    \|C_\lambda^{-1/2}g(\yb)\|_{\ell^2(d)}^2.
\end{equation}
Using the Karhunen--Loeve expansion {of $g$}
\begin{equation}
    \label{eq:gradient-kl}
    g(\yb)
    =
    \sum_{j\geq 1}\sigma_j\,\eta_j(\yb)\,u_j,
    \qquad
    \int_\Gamma \eta_i(\yb)\eta_j(\yb)\,\dif\mu(\yb)
    =
    \delta_{ij},
\end{equation}
we may also write
\begin{equation}
    \label{eq:ridge-christoffel-expansion}
    k_{\rho,\lambda}(\yb)
    =
    w(\yb)
    \sum_{j\geq 1}
    \frac{\sigma_j^2}{\sigma_j^2+\lambda}
    \eta_j(\yb)^2.
\end{equation}
For later use, if $\xi=(\xi_j)_{j\geq 1}$ is a nonnegative sequence and
$t>0$, we write
\begin{equation}
    \label{eq:generic-effective-dimension}
    d_{\mathrm{eff}}(t,\xi)
    :=
    \sum_{j\geq 1}
    \frac{\xi_j}{\xi_j+t}.
\end{equation}
We define the corresponding coherence constant $K_{\rho,\lambda}(g)$ of $g$
and the covariance effective dimension $d_{\mathrm{eff}}(\lambda)$ by
\begin{equation}
    \label{eq:ridge-coherence-effective-dimension}
    K_{\rho,\lambda}(g)
    :=
    \rho\text{-}\operatorname*{ess\,sup}_{\yb\in\Gamma}
    k_{\rho,\lambda}(\yb),
    \qquad
    d_{\mathrm{eff}}(\lambda)
    :=
    \int_\Gamma \sum_{j\geq 1}
    \frac{\sigma_j^2}{\sigma_j^2+\lambda}
    \eta_j(\yb)^2 \,\dif\mu(\yb).
\end{equation}
Notice that
\begin{equation}
    \label{eq:effective-dimension-expansion}
    d_{\mathrm{eff}}(\lambda)
    =
    d_{\mathrm{eff}}\left(\lambda,(\sigma_j^2)_{j\geq 1}\right)
    = \operatorname{tr}\left(C(C+\lambda I_{\mathcal R})^{-1}\right)
    =
    \sum_{j\geq 1}
    \frac{\sigma_j^2}{\sigma_j^2+\lambda},
\end{equation}
hence the number $d_{\mathrm{eff}}(\lambda)$ measures the dimension of the covariance
above the {regularization} scale $\lambda$.
When $\err(\Pi_r)^2>0$, we define the rank-$r$ canonical {regularization} scale by
\begin{equation}
    \label{eq:canonical-ridge-scale}
    \lambda_r
    :=
    \frac{\err(\Pi_r)^2}{r}
    =
    \frac{1}{r}\sum_{j>r}\sigma_j^2,
\end{equation}
Thus, $\lambda_r$ is the active subspace tail energy normalized by the target rank.
For this scale,
\begin{equation}
    \label{eq:effective-dimension-bound}
    d_{\mathrm{eff}}(\lambda_r)
    \leq
    \sum_{j\leq r}1
    +
    \frac{1}{\lambda_r}\sum_{j>r}\sigma_j^2
    =
    2r.
\end{equation}
We shall use the convention
\begin{equation}
    \label{eq:K-lambda-plus}
    K_{\rho,\lambda}^+(g):=\max\{K_{\rho,\lambda}(g),1\}.
\end{equation}

{Regularized leverage-score embeddings closely related to the event below have
previously been established for integral and continuous operators
\cite{bach2017equivalence,avron2019universal}.}

\begin{lemma}[The regularized covariance event]
    \label{lem:ridge-good-event}
    Assume $K_{\rho,\lambda}(g)<\infty$ for some $\lambda>0$.
    There exist universal constants $c_0,c_1>0$ such that, for every
    $\delta\in(0,1)$ and every $\eta\in(0,1)$, the condition
    \begin{equation}
        \label{eq:good-event-sample-condition}
        M
        \geq
        \frac{K_{\rho,\lambda}^+(g)}{c_1 \delta^2}
        \log\left(\frac{c_0(1+d_{\mathrm{eff}}(\lambda))}{\eta}\right)
    \end{equation}
    implies
    \begin{equation}
        \label{eq:good-event-probability}
        \Prob{\calE_\delta}
        \geq
        1-\eta,
    \end{equation}
    where
    \begin{equation}
        \label{eq:good-event}
        \calE_\delta
        :=
        \left\{
        \left\|
        C_\lambda^{-1/2}
        (\hat C-C)
        C_\lambda^{-1/2}
        \right\|_{\mathcal L(\mathcal R)}
        \leq
        \delta
        \right\}.
    \end{equation}
\end{lemma}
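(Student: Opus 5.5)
We will apply a matrix Bernstein inequality with intrinsic dimension (Tropp's version, which extends to self-adjoint Hilbert--Schmidt operators on separable Hilbert spaces; see also Minsker) to the sum of the centered random operators
\begin{equation*}
    X_i
    :=
    \frac1M\left(
    w(Y_i)\,C_\lambda^{-1/2}g(Y_i)\otimes C_\lambda^{-1/2}g(Y_i)
    - C_\lambda^{-1/2}CC_\lambda^{-1/2}
    \right),
    \qquad i=1,\ldots,M.
\end{equation*}
These act on $\mathcal R$. Here we use that $g(Y_i)\in\mathcal R$ for $\rho$-almost every draw on $\{w>0\}$ and that $g=0$ on $\{w=0\}$ by convention. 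By construction $\sum_i X_i=C_\lambda^{-1/2}(\hat C-C)C_\lambda^{-1/2}$ and $\mathbb E X_i=0$, because $\mathbb E_\rho[w\,g\otimes g]=C$.

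First, the uniform bound. The rank-one term $w(\yb)\,C_\lambda^{-1/2}g\otimes C_\lambda^{-1/2}g$ is positive with operator norm $k_{\rho,\lambda}(\yb)\le K_{\rho,\lambda}(g)$ $\rho$-almost surely. The mean $D:=C_\lambda^{-1/2}CC_\lambda^{-1/2}=\sum_j \frac{\sigma_j^2}{\sigma_j^2+\lambda}u_j\otimes u_j$ satisfies $0\preceq D\preceq I$. Since both pieces of $X_i$ are positive, $\|X_i\|\le \max\{K_{\rho,\lambda},1\}/M = K^+_{\rho,\lambda}(g)/M$. This is where the convention $K^+$ enters.

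Second, the variance. Writing $Z=w\,C_\lambda^{-1/2}g\otimes C_\lambda^{-1/2}g$, we have $Z^2=k_{\rho,\lambda}Z\preceq K_{\rho,\lambda}Z$. Hence
\begin{equation*}
    \mathbb E X_i^2
    = M^{-2}\left(\mathbb E Z^2 - D^2\right)
    \preceq M^{-2}K_{\rho,\lambda}D.
\end{equation*}
So $\sum_i\mathbb E X_i^2\preceq V:=\frac{K^+_{\rho,\lambda}}{M}D$. The intrinsic dimension is then
\begin{equation*}
    \operatorname{tr}(V)/\|V\|
    = \operatorname{tr}(D)/\|D\|
    = d_{\mathrm{eff}}(\lambda)\,\frac{\sigma_1^2+\lambda}{\sigma_1^2}.
\end{equation*}
This ratio can exceed $d_{\mathrm{eff}}(\lambda)$ when $\lambda\gg\sigma_1^2$, which is the main technical nuisance. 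I would handle it by enlarging the variance proxy to $V':=\frac{K^+_{\rho,\lambda}}{M}(D+\tau I_{\mathcal F})$ on a suitable finite-dimensional piece, or more simply by replacing $D$ with $\tilde D:=D+ (1-\|D\|)u_1\otimes u_1$. The latter has norm $1$, dominates $D$, and has trace at most $1+d_{\mathrm{eff}}(\lambda)$. This is exactly the $1+d_{\mathrm{eff}}$ appearing in the statement.

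Third, apply the intrinsic-dimension Bernstein inequality (Tropp, Theorem 7.7.1, two-sided version) with $v=K^+/M$, $L=K^+/M$ and intrinsic dimension at most $1+d_{\mathrm{eff}}(\lambda)$. For $\delta\ge \sqrt v+L/3$ this gives
\begin{equation*}
    \Prob{\|\textstyle\sum_i X_i\|\ge\delta}
    \le 8(1+d_{\mathrm{eff}}(\lambda))
    \exp\left(-\frac{\delta^2/2}{v+L\delta/3}\right)
    \le 8(1+d_{\mathrm{eff}}(\lambda))
    \exp\left(-\frac{3M\delta^2}{8K^+}\right),
\end{equation*}
using $\delta<1$. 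Requiring the right-hand side to be at most $\eta$ yields \eqref{eq:good-event-sample-condition} with $c_0=8$ and $c_1=3/8$. The side condition $\delta\ge\sqrt v+L/3$ is automatically implied after shrinking $c_1$, because $\log(c_0(1+d_{\mathrm{eff}})/\eta)\ge\log 8>1$ forces $v\le c_1\delta^2$.

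The main obstacles are twofold. One is justifying the infinite-dimensional version of the inequality: I would either cite Minsker's extension, or first project onto $\operatorname{span}\{u_1,\ldots,u_n\}$, apply the finite-dimensional bound uniformly in $n$, and let $n\to\infty$, using that $\hat C$ and $C$ are trace class so the compressed norms converge. The other is the intrinsic-dimension adjustment handled in the second step. Measurability of the event follows from the separability of $\mathcal R$.
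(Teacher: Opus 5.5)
Your proposal is correct and follows essentially the same route as the paper. The common steps are the whitened summands, the uniform bound $K^+_{\rho,\lambda}(g)$ from the difference of two positive operators, the variance bound $K_{\rho,\lambda}(g)B_\lambda$, the rank-one inflation $B_\lambda+(1-\|B_\lambda\|_2)P_1$ that caps the intrinsic dimension at $1+d_{\mathrm{eff}}(\lambda)$, and then an intrinsic-dimension Bernstein inequality. The paper cites Minsker's Hilbert--Schmidt version directly (constant $14$). Your alternative, Tropp's finite-dimensional bound (constant $8$) plus compression onto $\operatorname{span}\{u_1,\ldots,u_n\}$ and a limit, is also valid, because $C_\lambda^{-1/2}(\hat C-C)C_\lambda^{-1/2}$ is compact.
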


\begin{proof}
    Set
    \begin{equation}
        \label{eq:whitened-gradient}
        z(\yb):=C_\lambda^{-1/2}g(\yb)\in\mathcal R,
    \end{equation}
    and
    \begin{equation}
        \label{eq:whitened-covariance}
        B_\lambda
        :=
        \int_\Gamma
        w(\yb)z(\yb)\otimes z(\yb)
        \dif\rho(\yb)
        =
        C_\lambda^{-1/2}CC_\lambda^{-1/2}
        =
        C(C+\lambda I_{\mathcal R})^{-1}.
    \end{equation}
    Then,
    \begin{equation}
        \label{eq:whitened-covariance-trace}
        \operatorname{tr}(B_\lambda)
        =
        d_{\mathrm{eff}}(\lambda),
        \qquad
        \|B_\lambda\|_2\leq 1,
    \end{equation}
    and notice that the event $\calE_\delta$ is precisely the event $\|Z\|_2 \leq \delta$, where
    \begin{equation}
        Z :=
        \frac{1}{M}\sum_{i=1}^M
        w(Y_i)z(Y_i)\otimes z(Y_i)
        -
        B_\lambda.
    \end{equation}
    Define the centered random operators
    \begin{equation}
        \label{eq:bernstein-summands}
        S_i
        :=
        w(Y_i)z(Y_i)\otimes z(Y_i)-B_\lambda,
    \end{equation}
    which are independent, self-adjoint, and zero mean.
    Since
    \begin{equation}
        \label{eq:ridge-christoffel-bound-summands}
        \|w(Y_i)z(Y_i)\otimes z(Y_i)\|_2
        =
        w(Y_i)\|z(Y_i)\|_{\ell^2(d)}^2
        =
        k_{\rho,\lambda}(Y_i)
        \leq
        K_{\rho,\lambda}(g),
    \end{equation}
    {and $-B_\lambda \preceq S_i \preceq w(Y_i)z(Y_i)\otimes z(Y_i)$,}
    we have the uniform bound
    \begin{equation}
        \|S_i\|_2
        \leq
        {\max\{}
        \|w(Y_i)z(Y_i)\otimes z(Y_i)\|_2,
        \|B_\lambda\|_2
        \}
        \leq
        K_{\rho,\lambda}^+(g).
    \end{equation}
    Furthermore,
    \begin{equation}
        \label{eq:variance-bound-ridge}
        \Exp{S_i^2}
        =
        \Exp{\left(w(Y_i)z(Y_i)\otimes z(Y_i)\right)^2} - B_\lambda^2
        \preceq
        \Exp{\left(w(Y_i)z(Y_i)\otimes z(Y_i)\right)^2}
        \preceq
        K_{\rho,\lambda}(g)\,B_\lambda,
    \end{equation}
    therefore
    \begin{equation}
        \Var{Z}
        := \Exp{(Z - \Exp{Z})^2}
        = \frac{1}{M^2} \sum_{i=1}^M \Exp{S_i^2}
        \preceq
        \frac{ K_{\rho,\lambda}(g)}{M} B_\lambda.
    \end{equation}
    We now use the intrinsic-dimension form of the Bernstein inequality for self-adjoint Hilbert--Schmidt operators in the formulation given in
    \cite[Section~3.2]{minsker2017bernstein}.
    We apply the theorem with the inflated variance bound
    \begin{equation}
        \Var{Z}
        \preceq
        \frac{ K_{\rho,\lambda}^+(g)}{M} (B_\lambda + (1-\|B_\lambda\|_2) P_1),
    \end{equation}
    where $P_1$ is {a rank-one orthogonal} projector {onto an eigenvector} of $B_\lambda$ associated with its largest eigenvalue, so that the effective rank defined in \cite{minsker2017bernstein} can be bounded by $1+d_{\mathrm{eff}}(\lambda)$.
    This gives
    \begin{equation}
        \label{eq:intrinsic-bernstein-tail}
        \Prob{\calE_\delta^c}
        \leq
        14 \left(1+d_{\mathrm{eff}}(\lambda)\right)
        \exp\left(
        -
        \frac{M\delta^2}{2{(1 + \delta/3)} K_{\rho,\lambda}^+(g)}
        \right),
    \end{equation}
    provided $M \delta^2 \geq K_{\rho,\lambda}^+(g) + \delta K_{\rho,\lambda}^+(g) /3$.
    It is easy to check that the sampling condition \eqref{eq:good-event-sample-condition} implies this requirement for appropriately chosen universal constants $c_0,c_1>0$.
    The sampling condition \eqref{eq:good-event-sample-condition} follows from imposing the
    right-hand side of the above to be at most $\eta$,
    which proves \eqref{eq:good-event-probability}.
\end{proof}

\subsection{From regularized concentration to projection-error control}

We next describe how the event $\calE_\delta$ controls empirical projection errors.
Fix $\lambda>0$ and split the spectrum into the head and tail index sets
\begin{equation}
    \label{eq:head-tail-index-sets}
    \mathcal I_H
    :=
    \{j:\sigma_j^2>\lambda\},
    \qquad
    \mathcal I_T
    :=
    \{j:\sigma_j^2\leq\lambda\}.
\end{equation}
Let
\begin{equation}
    \label{eq:head-tail-projectors}
    P_H
    :=
    \sum_{j\in\mathcal I_H}u_j\otimes u_j,
    \qquad
    P_T
    :=
    I_{\mathcal R}-P_H.
\end{equation}
Since $C$ is trace class and $\lambda>0$, the head rank
\begin{equation}
    \label{eq:head-rank}
    s_\lambda:=\operatorname{rank}(P_H)
\end{equation}
is finite.
Then, let us define the head and tail components of the gradient
\begin{equation}
    \label{eq:head-tail-gradients}
    g_H:=P_Hg,
    \qquad
    g_T:=P_Tg,
\end{equation}
the head and tail covariance operators
\begin{equation}
    \label{eq:head-tail-covariances}
    C_H:=P_HCP_H,
    \qquad
    C_T:=P_TCP_T,
\end{equation}
and their empirical analogues
\begin{equation}
    \label{eq:empirical-head-tail-covariances}
    \hat C_H:=P_H\hat C P_H,
    \qquad
    \hat C_T:=P_T\hat C P_T,
    \qquad
    \hat C_{HT}:=P_H\hat C P_T.
\end{equation}
The continuous cross covariance vanishes because $P_H$ and $P_T$ are spectral
projectors of $C$, namely $P_HCP_T=0$.

\begin{lemma}[Properties of the set $\calE_\delta$]
    \label{lem:ridge-event-consequences}
    Fix $\lambda>0$ and $\delta\in(0,1)$.
    On the event $\calE_\delta$, the following estimates hold.
    \begin{enumerate}[label=(\roman*)]
        \item Equivalence of the empirical and continuous norms on the head component:
              for every bounded linear operator $A$ on $\ell^2(d)$,
              \begin{equation}
                  \label{eq:head-norm-equivalence}
                  \left|
                  \|Ag_H\|_M^2
                  -
                  \|Ag_H\|_{L^2_\mu(\Gamma)}^2
                  \right|
                  \leq
                  2\delta\,
                  \|Ag_H\|_{L^2_\mu(\Gamma)}^2.
              \end{equation}
        \item Bound on the spectral tail
              \begin{equation}
                  \label{eq:tail-spectral-control}
                  \|\hat C_T-C_T\|_2
                  \leq
                  2\delta\lambda.
              \end{equation}
        \item Bound on the head-tail cross covariance
              \begin{equation}
                  \label{eq:head-tail-cross-control}
                  \|C_H^{\dagger/2}\hat C_{HT}\|_F
                  \leq
                  2\delta\sqrt{s_\lambda\lambda},
              \end{equation}
              where $C_H^\dagger$ denotes the Moore--Penrose inverse on $\operatorname{Ran}(P_H)$ and $C_H^{\dagger/2}$ denotes its square root.
    \end{enumerate}
    Consequently, for every rank-$r$ orthogonal projector $\Pi$, denoting $c_T:=\operatorname{tr}(C_T)-\operatorname{tr}(\hat C_T)$,
    one has
    \begin{equation}
        \label{eq:shifted-error-arbitrary-lambda}
        \left|
        \hat{\err}(\Pi)^2
        +
        c_T
        -
        \err(\Pi)^2
        \right|
        \leq
        4\delta\,\err_H(\Pi)^2
        +
        2\delta\,r\lambda
        +
        2\delta\,s_\lambda\lambda,
    \end{equation}
    where
    \begin{equation}
        \label{eq:head-error-definition}
        \err_H(\Pi)^2
        :=
        \operatorname{tr}\left((I-\Pi)C_H\right).
    \end{equation}
\end{lemma}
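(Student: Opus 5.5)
The whole argument runs through the whitened deviation operator
\[
E:=C_\lambda^{-1/2}(\hat C-C)C_\lambda^{-1/2}\in\mathcal L(\mathcal R),
\]
which has $\|E\|_2\le\delta$ on $\calE_\delta$. Each of (i)--(iii) will be obtained by sandwiching $E$ between suitable operators. The key observation is that on the head $\operatorname{Ran}(P_H)$ we have $\sigma_j^2>\lambda$, so $C_\lambda\preceq 2C_H$ there. On the tail we have $C_\lambda\preceq 2\lambda I$.

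\emph{Step (i).} Write
\[
\|Ag_H\|_M^2-\|Ag_H\|_{L^2_\mu}^2=\operatorname{tr}\bigl(A P_H(\hat C-C)P_H A^\adj\bigr).
\]
Set $D:=AP_HC_\lambda^{1/2}$, so that this equals $\operatorname{tr}(DED^\adj)$, bounded in absolute value by $\delta\operatorname{tr}(DD^\adj)$. Then
\[
\operatorname{tr}(DD^\adj)=\operatorname{tr}(AP_HC_\lambda P_HA^\adj)\le 2\operatorname{tr}(AC_HA^\adj)=2\|Ag_H\|_{L^2_\mu}^2.
\]
The identity $\|Ag_H\|^2_{L^2_\mu}=\operatorname{tr}(AC_HA^\adj)$ uses $P_HCP_H=C_H$.

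\emph{Step (ii).} Since $P_TCP_T=C_T$, we have
\[
\hat C_T-C_T=(P_TC_\lambda^{1/2})\,E\,(C_\lambda^{1/2}P_T),
\]
so $\|\hat C_T-C_T\|_2\le\delta\|P_TC_\lambda P_T\|_2\le\delta(\lambda+\lambda)=2\delta\lambda$.

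\emph{Step (iii).} Because $P_HCP_T=0$, we get
\[
C_H^{\dagger/2}\hat C_{HT}=C_H^{\dagger/2}P_HC_\lambda^{1/2}\,E\,C_\lambda^{1/2}P_T.
\]
Using $\|XEY\|_F\le\|X\|_F\|E\|_2\|Y\|_2$:
\begin{itemize}
\item $\|C_H^{\dagger/2}P_HC_\lambda^{1/2}\|_F^2=\sum_{j\in\mathcal I_H}(\sigma_j^2+\lambda)/\sigma_j^2\le 2s_\lambda$;
\item $\|C_\lambda^{1/2}P_T\|_2\le\sqrt{2\lambda}$.
\end{itemize}
This gives $\|C_H^{\dagger/2}\hat C_{HT}\|_F\le 2\delta\sqrt{s_\lambda\lambda}$.

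\emph{Consequence.} Let $Q=I-\Pi$. Decompose $\hat C=\hat C_H+\hat C_T+\hat C_{HT}+\hat C_{HT}^\adj$ (all supported in $\mathcal R$, as is $C$) and $C=C_H+C_T$. Then
\[
\hat\err(\Pi)^2+c_T-\err(\Pi)^2=\operatorname{tr}\bigl(Q(\hat C_H-C_H)\bigr)+\bigl[\operatorname{tr}(Q\hat C_T)-\operatorname{tr}\hat C_T\bigr]-\bigl[\operatorname{tr}(QC_T)-\operatorname{tr}C_T\bigr]+2\operatorname{tr}(Q\hat C_{HT}).
\]
The four contributions are handled as follows.
\begin{itemize}
\item \emph{Head term.} Apply (i) with $A=Q$; it is at most $2\delta\,\err_H(\Pi)^2$.
\item \emph{Tail terms.} Their combination equals $-\operatorname{tr}(\Pi(\hat C_T-C_T))$. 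Since $\Pi$ has rank $r$, (ii) bounds it by $2\delta r\lambda$.
\item \emph{Cross term.} Use $\operatorname{tr}(Q\hat C_{HT})=-\operatorname{tr}(\Pi P_H\hat C_{HT})$ and $P_H=C_H^{1/2}C_H^{\dagger/2}$. Write it as $\operatorname{tr}\bigl((P_T\Pi C_H^{1/2})(C_H^{\dagger/2}\hat C_{HT})\bigr)$; cyclicity is justified since $\hat C_{HT}=P_H\hat C_{HT}P_T$.
\end{itemize}

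The cross term is the main obstacle, because a crude bound loses the structure. By Cauchy--Schwarz in Frobenius norm it is at most $\|P_T\Pi C_H^{1/2}\|_F\cdot 2\delta\sqrt{s_\lambda\lambda}$. Next, $P_T\Pi P_H=-P_TQP_H$, so
\[
\|P_T\Pi C_H^{1/2}\|_F^2\le\operatorname{tr}(C_H^{1/2}QC_H^{1/2})=\err_H(\Pi)^2,
\]
using $\|P_TQ x\|\le\|Qx\|$. Hence
\[
2|\operatorname{tr}(Q\hat C_{HT})|\le 4\delta\,\err_H(\Pi)\sqrt{s_\lambda\lambda}\le 2\delta\,\err_H(\Pi)^2+2\delta\,s_\lambda\lambda
\]
by AM--GM. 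Adding the head term ($2\delta\err_H^2$), the tail terms ($2\delta r\lambda$) and the cross term gives exactly \eqref{eq:shifted-error-arbitrary-lambda}. The constant $4\delta\,\err_H^2$ is consistent with this split, which supports that this is the intended route.
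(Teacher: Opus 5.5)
Your proof is correct and follows the paper's argument. The paper phrases the event as the Loewner sandwich $-\delta C_\lambda\preceq\hat C-C\preceq\delta C_\lambda$ instead of using the whitened operator $E$, and in (iii) it bounds the operator norm and then uses $\operatorname{rank}\le s_\lambda$ rather than putting the Frobenius norm on the left factor, but these are equivalent; your detour through $\Pi$ and $P_T$ in the cross term (where a harmless minus sign is dropped) can be skipped, since $\hat C_{HT}=C_H^{1/2}C_H^{\dagger/2}\hat C_{HT}$ gives $\operatorname{tr}(Q\hat C_{HT})=\operatorname{tr}(QC_H^{1/2}\,C_H^{\dagger/2}\hat C_{HT})$ directly.
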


\begin{proof}
    Let $E:=\hat C-C$.
    On $\calE_\delta$ we have the operator inequality
    \begin{equation}
        \label{eq:ridge-loewner-event}
        -\delta C_\lambda
        \preceq
        E
        \preceq
        \delta C_\lambda
        \qquad
        \text{on }\mathcal R.
    \end{equation}

    We first prove the head estimate (i).
    Since $\sigma_j^2>\lambda$ on $\mathcal I_H$, we have
    \begin{equation}
        \label{eq:head-ridge-controlled-by-covariance}
        P_H C_\lambda P_H
        =
        C_H+\lambda P_H
        \preceq
        2C_H.
    \end{equation}
    Hence, by \eqref{eq:ridge-loewner-event},
    \begin{equation}
        \label{eq:head-operator-control}
        -2\delta C_H
        \preceq
        P_HEP_H
        \preceq
        2\delta C_H.
    \end{equation}
    Testing this inequality against $A^\adj A$ gives
    \begin{equation}
        \left|
        \operatorname{tr}\left(A(P_HEP_H)A^\adj\right)
        \right|
        \leq
        2\delta\operatorname{tr}(AC_HA^\adj),
    \end{equation}
    which is exactly \eqref{eq:head-norm-equivalence}.

    We next prove the tail estimate (ii).
    Let $x\in\operatorname{Ran}(P_T)$ with $\|x\|_{\ell^2(d)}=1$.
    Since $\sigma_j^2\leq\lambda$ on $\mathcal I_T$,
    \begin{equation}
        x^\adj Cx\leq \lambda.
    \end{equation}
    Therefore, using \eqref{eq:ridge-loewner-event},
    \begin{equation}
        \left|
        x^\adj Ex
        \right|
        \leq
        \delta\,x^\adj(C+\lambda I_{\mathcal R})x
        \leq
        2\delta\lambda.
    \end{equation}
    Taking the supremum over unit vectors in $\operatorname{Ran}(P_T)$ gives
    \eqref{eq:tail-spectral-control}.

    For the cross term, observe that $P_HCP_T=0$, so
    \begin{equation}
        \label{eq:cross-block-is-error}
        \hat C_{HT}=P_HEP_T.
    \end{equation}
    The event $\calE_\delta$ implies
    \begin{equation}
        \label{eq:whitened-cross-block}
        \left\|
        (C_H+\lambda P_H)^{\dagger/2}
        \hat C_{HT}
        (C_T+\lambda P_T)^{\dagger/2}
        \right\|_2
        \leq
        \delta.
    \end{equation}
    Indeed, $P_H$ and $P_T$ commute with
    $C_\lambda$ and $C_\lambda^{-1/2}$ since they are spectral projectors of $C$, hence the left-hand side in
    \eqref{eq:whitened-cross-block} is
    $\|P_HC_\lambda^{-1/2}EC_\lambda^{-1/2}P_T\|_2$, which is bounded by
    $\delta$ on $\calE_\delta$.
    Since $C_H\succeq \lambda P_H$ and $C_T\preceq \lambda P_T$,
    \begin{equation}
        \label{eq:cross-factor-bounds}
        \left\|
        C_H^{\dagger/2}(C_H+\lambda P_H)^{1/2}
        \right\|_2
        \leq
        \sqrt{2},
        \qquad
        \left\|
        (C_T+\lambda P_T)^{1/2}
        \right\|_2
        \leq
        \sqrt{2\lambda}.
    \end{equation}
    Therefore,
    \begin{equation}
        \label{eq:cross-operator-bound}
        \begin{split}
            \|C_H^{\dagger/2}\hat C_{HT}\|_2
             & =
            \left\|
            C_H^{\dagger/2}(C_H+\lambda P_H)^{1/2}
            (C_H+\lambda P_H)^{\dagger/2}
            \hat C_{HT}
            (C_T+\lambda P_T)^{\dagger/2}
            (C_T+\lambda P_T)^{1/2}
            \right\|_2 \\
             & \leq
            2\delta\sqrt{\lambda}.
        \end{split}
    \end{equation}
    Finally, note that the range of $C_H^{\dagger/2}\hat C_{HT}$ lies in
    $\operatorname{Ran}(P_H)$, hence its rank is at most $s_\lambda$.
    This yields the bound
    \begin{equation}
        \|C_H^{\dagger/2}\hat C_{HT}\|_F
        \leq
        \sqrt{s_\lambda}\,
        \|C_H^{\dagger/2}\hat C_{HT}\|_2
        \leq
        2\delta\sqrt{s_\lambda\lambda},
    \end{equation}
    which proves \eqref{eq:head-tail-cross-control}.

    It remains to derive \eqref{eq:shifted-error-arbitrary-lambda}.
    Fix a rank-$r$ orthogonal projector $\Pi$ and write $Q:=I-\Pi$.
    Since $C=C_H+C_T$ and the continuous cross term vanishes,
    \begin{equation}
        \label{eq:continuous-error-head-tail}
        \err(\Pi)^2
        =
        \operatorname{tr}(QC_H)
        +
        \operatorname{tr}(QC_T).
    \end{equation}
    On the other hand,
    \begin{equation}
        \label{eq:empirical-error-head-tail}
        \hat{\err}(\Pi)^2
        =
        \operatorname{tr}(Q\hat C_H)
        +
        \operatorname{tr}(Q\hat C_T)
        +
        2 \operatorname{tr}(Q\hat C_{HT}),
    \end{equation}
    where we used the fact that $\operatorname{tr}(Q\hat C_{HT})= \operatorname{tr}(Q\hat C_{TH})$ by the cyclic property of the trace and the self-adjointness of $Q$.
    The head estimate \eqref{eq:head-norm-equivalence}, applied with $A=Q$,
    gives
    \begin{equation}
        \label{eq:head-error-bound}
        \left|
        \operatorname{tr}(Q\hat C_H)-\operatorname{tr}(QC_H)
        \right|
        \leq
        2\delta\,\err_H(\Pi)^2.
    \end{equation}
    For the tail, the scalar shift $c_T$ removes the fluctuation of the total tail
    trace:
    \begin{equation}
        \label{eq:tail-shift-identity}
        \operatorname{tr}(Q\hat C_T)
        +
        c_T
        -
        \operatorname{tr}(QC_T)
        =
        -\operatorname{tr}\left(\Pi(\hat C_T-C_T)\right).
    \end{equation}
    Therefore, by \eqref{eq:tail-spectral-control},
    \begin{equation}
        \label{eq:tail-error-bound}
        \left|
        \operatorname{tr}(Q\hat C_T)
        +
        c_T
        -
        \operatorname{tr}(QC_T)
        \right|
        \leq
        r\,\|\hat C_T-C_T\|_2
        \leq
        2\delta r\lambda.
    \end{equation}
    Finally, for the cross term we write
    \begin{equation}
        \operatorname{tr}(Q\hat C_{HT})
        =
        \operatorname{tr}\left(
        Q C_H^{1/2}
        \left(C_H^{\dagger/2}\hat C_{HT}\right)
        \right).
    \end{equation}
    By Cauchy--Schwarz in the Frobenius inner product and
    \eqref{eq:head-tail-cross-control},
    \begin{equation}
        \label{eq:cross-error-bound-first}
        \left|
        \operatorname{tr}(Q\hat C_{HT})
        \right|
        \leq
        \|QC_H^{1/2}\|_F
        \|C_H^{\dagger/2}\hat C_{HT}\|_F
        \leq
        2\delta
        \sqrt{\err_H(\Pi)^2s_\lambda\lambda}.
    \end{equation}
    Hence
    \begin{equation}
        \label{eq:cross-error-bound}
        \left|
        2\operatorname{tr}(Q\hat C_{HT})
        \right|
        \leq
        4\delta
        \sqrt{\err_H(\Pi)^2s_\lambda\lambda}
        \leq
        2\delta\,\err_H(\Pi)^2
        +
        2\delta\,s_\lambda\lambda.
    \end{equation}
    Combining \eqref{eq:head-error-bound}, \eqref{eq:tail-error-bound}, and
    \eqref{eq:cross-error-bound} proves \eqref{eq:shifted-error-arbitrary-lambda}.
\end{proof}

The scalar shift $c_T$ in Lemma~\ref{lem:ridge-event-consequences} is independent
of $\Pi$, and therefore it does not affect the empirical minimizer.
This allows the shifted comparison to be converted into an error bound for the
empirical active subspace.
The terms involving $r\lambda$ and $s_\lambda\lambda$ quantify the residual
effect of working at {regularization} scale $\lambda${.
The head--tail mechanism parallels ridge-leverage analyses for finite matrices
and continuous random features \cite{cohen2017ridge,musco2017kernel}.
Projector-independent shifts of this kind are standard in low-rank sketching \cite{musco2020pcp}}.

\begin{theorem}[Quasi-optimality of the empirical active subspace]
    \label{thm:slas-ridge-quasi-opt}
    Let $r\geq 1$, let $\lambda>0$, and assume
    $K_{\rho,\lambda}(g)<\infty$.
    Let $\hat\Pi_r$ be the rank-$r$ empirical active subspace projector computed
    from \eqref{eq:emp-cov-is}.
    There exist universal constants $c_0,c_1>0$ such that, for every $\varepsilon\in(0,1)$ and $\eta\in(0,1)$, if
    \begin{equation}
        \label{eq:samp-ineq-eta}
        M
        \geq
        \frac{K_{\rho,\lambda}^+(g)}{c_0 \varepsilon^2}
        \log\left(
        \frac{c_1(1+d_{\mathrm{eff}}(\lambda))}{\eta}
        \right),
    \end{equation}
    then, with probability at least $1-\eta$,
    \begin{equation}
        \label{eq:quasi-opt-general-lambda}
        \err(\hat\Pi_r)^2
        \leq
        \frac{4+\varepsilon}{4-\varepsilon} \err(\Pi_r)^2
        +
        \frac{\varepsilon}{4 - \varepsilon} (r+s_\lambda)\lambda.
    \end{equation}
    In particular, if $\err(\Pi_r)^2>0$ and
    $\lambda=\lambda_r$, with $\lambda_r$ as in
    \eqref{eq:canonical-ridge-scale}, then the same sampling condition
    implies
    \begin{equation}
        \label{eq:quasi-opt-high-probability}
        \err(\hat\Pi_r)^2
        \leq
        \frac{1+\varepsilon}{1-\varepsilon}
        \err(\Pi_r)^2.
    \end{equation}
    If instead $\err(\Pi_r)=0$, let $k:=\operatorname{rank}(C)$, so that
    $1\leq k\leq r$.
    If $0<\lambda\leq\sigma_k^2$, then the same sampling condition implies,
    with probability at least $1-\eta$,
    \begin{equation*}
        \err(\hat\Pi_r)
        =
        0.
    \end{equation*}
\end{theorem}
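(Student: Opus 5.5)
We will prove the theorem by combining Lemma~\ref{lem:ridge-good-event} with the uniform shifted comparison~\eqref{eq:shifted-error-arbitrary-lambda} of Lemma~\ref{lem:ridge-event-consequences}, using the empirical optimality of $\hat\Pi_r$. Set $\delta:=\varepsilon/16\in(0,1)$. Lemma~\ref{lem:ridge-good-event} applied with this $\delta$ gives a sampling condition of the form~\eqref{eq:samp-ineq-eta}; the factor $256$ coming from $\delta^{-2}$ is absorbed into the universal constant $c_0$. Hence $\Prob{\calE_\delta}\geq 1-\eta$. Everything below is a deterministic argument on $\calE_\delta$.

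\emph{General bound.} On $\calE_\delta$ we use~\eqref{eq:shifted-error-arbitrary-lambda} twice, together with $\err_H(\Pi)^2=\operatorname{tr}((I-\Pi)C_H)\leq\err(\Pi)^2$, which holds since $C_T\succeq 0$. The first use, with $\Pi=\hat\Pi_r$, gives
\[
(1-4\delta)\,\err(\hat\Pi_r)^2\leq \hat\err(\hat\Pi_r)^2+c_T+2\delta(r+s_\lambda)\lambda .
\]
Because $c_T$ does not depend on $\Pi$ and $\hat\Pi_r$ minimizes $\hat\err$, we have $\hat\err(\hat\Pi_r)^2\leq\hat\err(\Pi_r)^2$. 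The second use, with $\Pi=\Pi_r$, gives
\[
\hat\err(\Pi_r)^2+c_T\leq(1+4\delta)\,\err(\Pi_r)^2+2\delta(r+s_\lambda)\lambda .
\]
Combining the two, $(1-\varepsilon/4)\,\err(\hat\Pi_r)^2\leq(1+\varepsilon/4)\,\err(\Pi_r)^2+(\varepsilon/4)(r+s_\lambda)\lambda$. Multiplying by $4$ and dividing by $4-\varepsilon$ yields exactly~\eqref{eq:quasi-opt-general-lambda}.

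\emph{Canonical scale.} Take $\lambda=\lambda_r$. We first bound the head rank $s_\lambda$. If $s_\lambda>r$, then
\[
r\lambda_r=\sum_{j>r}\sigma_j^2\geq\sum_{r<j\leq s_\lambda}\sigma_j^2>(s_\lambda-r)\lambda_r ,
\]
so $s_\lambda<2r$ in every case. Therefore $(r+s_\lambda)\lambda_r\leq 3r\lambda_r=3\,\err(\Pi_r)^2$. Substituting into~\eqref{eq:quasi-opt-general-lambda} gives
\[
\err(\hat\Pi_r)^2\leq\frac{4+4\varepsilon}{4-\varepsilon}\,\err(\Pi_r)^2 .
\]
Since $4(1-\varepsilon)\leq 4-\varepsilon$, the factor satisfies $\frac{4+4\varepsilon}{4-\varepsilon}\leq\frac{1+\varepsilon}{1-\varepsilon}$, which proves~\eqref{eq:quasi-opt-high-probability}.

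\emph{Exact case $\err(\Pi_r)=0$.} Here the estimate~\eqref{eq:quasi-opt-general-lambda} is not sharp enough: when $\lambda=\sigma_k^2$ the index $k$ lies in the tail, so the remainder $(r+s_\lambda)\lambda$ need not vanish. I expect this case to be the main delicate point, and I would argue directly from the Loewner form~\eqref{eq:ridge-loewner-event} of $\calE_\delta$ instead.

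First, $\hat C$ is supported in $\mathcal R$ almost surely. Indeed, $g\in\mathcal R$ holds $\mu$-a.e., hence $\rho$-a.e. on $\{w>0\}$, and $wg=0$ on $\{w=0\}$ by the chosen convention. Next, $\mathcal R$ has dimension $k$ and $C\succeq\sigma_k^2I_{\mathcal R}$ on it. Using $\lambda\leq\sigma_k^2$, on $\mathcal R$ we obtain
\[
\hat C\succeq C-\delta(C+\lambda I_{\mathcal R})\succeq\bigl((1-\delta)\sigma_k^2-\delta\lambda\bigr)I_{\mathcal R}\succeq(1-2\delta)\sigma_k^2\,I_{\mathcal R}\succ 0 .
\]
So $\hat C$ has exactly $k\leq r$ positive eigenvalues, its eigenvectors span $\mathcal R$, and all other eigenvalues vanish. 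Any leading rank-$r$ spectral projector $\hat\Pi_r$ must therefore contain $\mathcal R$ in its range. Since $g\in\mathcal R$ $\mu$-a.e., this gives $(I-\hat\Pi_r)g=0$ $\mu$-a.e., that is, $\err(\hat\Pi_r)=0$.
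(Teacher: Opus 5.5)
Your proposal is correct and matches the paper's proof step for step: you set $\delta=\varepsilon/16$ in Lemma~\ref{lem:ridge-good-event}, apply \eqref{eq:shifted-error-arbitrary-lambda} on both sides using the projector-independent shift $c_T$, bound the head rank by $s_{\lambda_r}\leq 2r$, and in the exact case argue directly via the Loewner bound $\hat C\succeq(1-2\delta)\sigma_k^2 I_{\mathcal R}$, exactly as the paper does. One minor point: your stated reason for not using \eqref{eq:quasi-opt-general-lambda} in the exact case is slightly off, since the remainder contains $\varepsilon r\lambda/(4-\varepsilon)>0$ for every $\lambda>0$, but this is only motivation and does not affect the argument.
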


\begin{proof}
    By Lemma~\ref{lem:ridge-good-event}, with $\delta=\varepsilon/16$, the sampling
    condition \eqref{eq:samp-ineq-eta} implies that
    $\calE_\delta$ holds with probability at least $1-\eta$, after modifying the
    universal constant $c_0$.

    On $\calE_\delta$, \eqref{eq:shifted-error-arbitrary-lambda} and
    $\err_H(\Pi)^2\leq\err(\Pi)^2$ imply that, for every rank-$r$ orthogonal
    projector $\Pi$,
    \begin{equation}
        \label{eq:shifted-error-epsilon-lambda}
        \left|
        \hat{\err}(\Pi)^2
        +
        c_T
        -
        \err(\Pi)^2
        \right|
        \leq
        \frac{\varepsilon}{4}\err(\Pi)^2
        +
        \frac{\varepsilon}{8}(r+s_\lambda)\lambda.
    \end{equation}
    Since $\hat\Pi_r$ minimizes $\hat{\err}(\Pi)^2$ over rank-$r$ projectors,
    and since $c_T$ is independent of $\Pi$, we have
    \begin{equation}
        \label{eq:empirical-optimality-with-shift}
        \hat{\err}(\hat\Pi_r)^2+c_T
        \leq
        \hat{\err}(\Pi_r)^2+c_T.
    \end{equation}
    Applying \eqref{eq:shifted-error-epsilon-lambda} first to $\hat\Pi_r$ and
    then to $\Pi_r$ gives
    \begin{equation}
        \begin{split}
            \left(1-\frac{\varepsilon}{4}\right)\err(\hat\Pi_r)^2
            -
            \frac{\varepsilon}{8}(r+s_\lambda)\lambda
             & \leq
            \hat{\err}(\hat\Pi_r)^2+c_T \\
             & \leq
            \hat{\err}(\Pi_r)^2+c_T     \\
             & \leq
            \left(1+\frac{\varepsilon}{4}\right)\err(\Pi_r)^2
            +
            \frac{\varepsilon}{8}(r+s_\lambda)\lambda.
        \end{split}
    \end{equation}
    Rearranging proves \eqref{eq:quasi-opt-general-lambda}.

    Suppose first that $\err(\Pi_r)>0$ and
    $\lambda=\lambda_r$, with $\lambda_r$ as in
    \eqref{eq:canonical-ridge-scale}.
    In this case, at most $r$ head eigenvalues occur
    among the first $r$ indices.
    For indices $j>r$, each head eigenvalue contributes more than
    $\lambda_r$ to $\err(\Pi_r)^2$, hence there can be at most
    $\err(\Pi_r)^2/\lambda_r=r$ such indices.
    Thus $s_{\lambda_r}\leq 2r$, and consequently
    \begin{equation}
        (r+s_{\lambda_r})\lambda_r
        \leq
        3\err(\Pi_r)^2.
    \end{equation}
    Substituting this into \eqref{eq:quasi-opt-general-lambda} gives
    \begin{equation}
        \err(\hat\Pi_r)^2
        \leq
        \frac{1+\varepsilon}{1-\frac{\varepsilon}{4}}\err(\Pi_r)^2
        \leq
        \frac{1+\varepsilon}{1-\varepsilon}\err(\Pi_r)^2,
    \end{equation}
    which proves \eqref{eq:quasi-opt-high-probability}.

    Finally, suppose that $\err(\Pi_r)=0$ and let
    $k=\operatorname{rank}(C)$.
    The tail identity \eqref{eq:population-active-subspace-tail} gives
    $1\leq k\leq r$.
    Since $g(\yb)\in\mathcal R$ for $\mu$-almost every $\yb$ and
    $w=\dif\mu/\dif\rho$, each weighted covariance summand in
    \eqref{eq:emp-cov-is} has range in $\mathcal R$ almost surely.
    Hence,
    \begin{equation*}
        \operatorname{Ran}(\hat C)
        \subseteq
        \mathcal R.
    \end{equation*}
    If $0<\lambda\leq\sigma_k^2$, then, on $\calE_\delta$,
    \begin{equation*}
        \begin{split}
            \hat C
             & \succeq
            C-\delta\left(C+\lambda I_{\mathcal R}\right) \\
             & \succeq
            (1-2\delta)\sigma_k^2 I_{\mathcal R}.
        \end{split}
    \end{equation*}
    Thus, $\hat C$ is positive definite on $\mathcal R$.
    Together with the range inclusion above, this yields
    $\operatorname{Ran}(\hat C)=\mathcal R$.
    Since $k\leq r$, the range of every leading rank-$r$ spectral projector of $\hat C$
    contains $\mathcal R$, and therefore
    \begin{equation*}
        \err(\hat\Pi_r)^2
        =
        \operatorname{tr}\left((I-\hat\Pi_r)C\right)
        =
        0.
    \end{equation*}
\end{proof}

The function $k_{\rho,\lambda}$ in \eqref{eq:ridge-christoffel} is the continuous
analogue of a ridge leverage score.
In the eigenbasis of the gradient covariance, it assigns the point $\yb$ the
weight
\begin{equation}
    k_{\rho,\lambda}(\yb)
    =
    w(\yb)
    \sum_{j\geq 1}
    \frac{\sigma_j^2}{\sigma_j^2+\lambda}\eta_j(\yb)^2.
\end{equation}
Thus, directions with $\sigma_j^2\gg\lambda$ are treated almost as in the
unregularized inverse Christoffel function, while directions with
$\sigma_j^2\ll\lambda$ are downweighted proportionally to
$\sigma_j^2/\lambda$.
Moreover, the definition \eqref{eq:ridge-coherence-effective-dimension} implies the lower bound on the coherence $K_{\rho,\lambda}(g) \geq d_{\mathrm{eff}}(\lambda)$.
The {regularized} Christoffel measure is obtained by normalizing the unweighted {regularized} inverse Christoffel function $k_{\mu,\lambda}$:
\begin{equation}
    \label{eq:optimal-sampling-measure}
    \dif \rho_\lambda^\star(\yb)
    =
    \frac{\|C_\lambda^{-1/2}g(\yb)\|_{\ell^2(d)}^2}{d_{\mathrm{eff}}(\lambda)}
    =
    \frac{1}{d_{\mathrm{eff}}(\lambda)} \sum_{j\geq 1} \frac{\sigma_j^2}{\sigma_j^2+\lambda}\eta_j(\yb)^2
    \dif \mu(\yb).
\end{equation}
Then $K_{\rho_\lambda^\star,\lambda}(g)=d_{\mathrm{eff}}(\lambda)$.
Thus, for a fixed $\lambda$, the {regularized} Christoffel measure $\rho_\lambda^\star$ minimizes the
coherence factor in the sampling condition \eqref{eq:samp-ineq-eta}.
When $\err(\Pi_r)>0$, choosing the canonical {regularization} scale
\eqref{eq:canonical-ridge-scale} and using
\eqref{eq:effective-dimension-bound} gives
$d_{\mathrm{eff}}(\lambda_r)\leq 2r$, and the sampling condition becomes
$M=\calO(r\log(r))$ up to constants and probability factors.
However, the {regularized} Christoffel measure depends on the unknown covariance eigenfunctions, and therefore it is not directly useful for sampling.
Moreover, if the eigenfunctions were known, then sampling would involve evaluations of $g$, which poses further problems in settings where $g$ is computationally expensive to query.

Let us further discuss the role of $\lambda$, which is not only technical.
Indeed, it prevents the sampling condition from being controlled by the smallest
retained covariance eigenvalue.
To see this, suppose for simplicity that one only assumes the weighted
gradient bound
\begin{equation}
    \label{eq:weighted-gradient-bound}
    B_\rho(\nabla f)^2
    :=
    \rho\text{-}\operatorname*{ess\,sup}_{\yb\in\Gamma}
    w(\yb)\|\nabla f(\yb)\|_{\ell^2(d)}^2
    <
    \infty.
\end{equation}
Then the {regularized} inverse Christoffel function satisfies the elementary estimate
\begin{equation}
    \label{eq:ridge-coherence-bounded-gradient}
    k_{\rho,\lambda}(\yb)
    =
    w(\yb)\|C_\lambda^{-1/2}\nabla f(\yb)\|_{\ell^2(d)}^2
    \leq
    \frac{w(\yb)\|\nabla f(\yb)\|_{\ell^2(d)}^2}{\lambda},
\end{equation}
and hence
\begin{equation}
    \label{eq:Klambda-bounded-gradient}
    K_{\rho,\lambda}(g)
    \leq
    \frac{B_\rho(g)^2}{\lambda}.
\end{equation}
In contrast, an unregularized rank-$r$ analysis based only on the top active
directions typically contains denominators of size $\sigma_r^2$.
Under the same boundedness assumption, this leads to coherence bounds of the
form $B_\rho(\nabla f)^2/\sigma_r^2$.
In the positive-tail regime, this is unfavorable when the spectrum is flat or
slowly decaying near the truncation index, since $\sigma_r^2$ can be much
smaller than the canonical {regularization} scale
$\lambda_r=\err(\Pi_r)^2/r$.
The canonical {regularization} scale \eqref{eq:canonical-ridge-scale} replaces the sensitivity to
$\sigma_r^2$ by sensitivity to the active subspace tail energy normalized by the target rank.
The resulting estimate is therefore stable under small covariance eigenvalues
below the accuracy scale.

This should be distinguished from an unregularized Monte Carlo covariance estimate.
A direct bound on $\|\hat C-C\|_2$ gives an additive covariance error that
decays at the usual rate $M^{-1/2}$ under boundedness assumptions.
Such a bound does not distinguish between energetic and negligible spectral
directions.
To use it for active subspaces, one must make the additive covariance error
small compared with the target projection error, or impose additional
spectral separation assumptions.
The regularized estimate in Lemma~\ref{lem:ridge-good-event} is instead relative
to $C+\lambda I_{\mathcal R}$: directions above the {regularization} scale are preserved, while directions below the {regularization} scale are controlled only up to
the additive scale $\lambda$.
This is precisely the scale needed in Lemma~\ref{lem:ridge-event-consequences} to
control the tail and the empirical head--tail interaction.

Finally, let us further comment on how our result compares to known estimates in the AS literature.
{In particular, we} consider the intrinsic-dimension
analysis of the single-fidelity active subspace estimator in
\cite{lam2020multifidelity}.
In that work, the AS error analysis is based on the additive bound
\begin{equation} \label{eq:add-err-bound-marzouk}
    \err(\hat\Pi_r)^2
    \leq
    \err(\Pi_r)^2
    +
    2r\|\hat C-C\|_2,
\end{equation}
where $\hat C$ is the standard (unweighted and unregularized) covariance estimator.
The above follows from the empirical optimality of $\hat \Pi_r$ and the identity
\begin{equation}
    \hat{\err}(\Pi)^2
    +
    \operatorname{tr}(C)-\operatorname{tr}(\hat C)
    -
    \err(\Pi)^2
    =
    -\operatorname{tr}(\Pi (\hat C-C)),
\end{equation}
which implies
\begin{equation}
    \left|
    \hat{\err}(\Pi)^2
    +
    \operatorname{tr}(C)-\operatorname{tr}(\hat C)
    -
    \err(\Pi)^2
    \right|
    \leq
    r\|\hat C-C\|_2.
\end{equation}
The difference with our analysis is hence that the bound \eqref{eq:add-err-bound-marzouk} requires the control of the operator norm of the error on the whole covariance operator
before it is converted into a squared active subspace error.
Indeed, under a boundedness assumption of the form
\begin{equation}
    \mu\text{-}\operatorname*{ess\,sup}_{\yb\in\Gamma}
    \frac{\|\nabla f(\yb)\|_{\ell^2(d)}^2}{\operatorname{tr}(C)}
    \leq
    \beta^2
\end{equation}
their single-fidelity result gives
\begin{equation}
    \|\hat C-C\|_2
    \lesssim
    \varepsilon\|C\|_2
\end{equation}
with a sample complexity
\begin{equation}
    M \gtrsim\frac{\beta^2 \delta_C \log(\delta_C)}{\varepsilon^{2}},
\end{equation}
where $\delta_C:=\operatorname{trace}(C)/\|C\|_2$ denotes the intrinsic dimension of the covariance.
Consequently, to guarantee a projection-error tolerance $\tol$, let
\begin{equation}
    r_\tol
    :=
    \min\left\{
    r\geq 1:\err(\Pi_r)^2\leq \tol^2
    \right\}.
\end{equation}
The additive covariance route must then enforce
\begin{equation}
    \|\hat C-C\|_2\lesssim \frac{\tol^2}{r_\tol},
\end{equation}
which, for the relative covariance estimate above, corresponds to
\begin{equation}
    \varepsilon_{\mathrm{cov}}
    \lesssim
    \frac{\tol^2}{r_\tol\|C\|_2}.
\end{equation}
Substituting this tolerance into the intrinsic-dimension covariance bound gives
the sample complexity
\begin{equation}
    M
    \gtrsim
    \beta^2 \delta_C \log(\delta_C)
    \left(
    \frac{r_\tol\|C\|_2}{\tol^2}
    \right)^2.
\end{equation}
Instead, our regularized analysis can be applied directly at the
tolerance-dependent {regularization} scale
\begin{equation}
    \lambda_\tol:=\frac{\tol^2}{r_\tol}.
\end{equation}
Since $\err(\Pi_{r_\tol})^2\leq \tol^2$, the elementary spectral estimates
\begin{equation}
    d_{\mathrm{eff}}(\lambda_\tol)\leq 2r_\tol,
    \qquad
    s_{\lambda_\tol}\leq 2r_\tol,
\end{equation}
which are used again in the proof of
Theorem~\ref{thm:smoothness-tolerance-complexity}, show that the additive
{regularization-scale} term in \eqref{eq:quasi-opt-general-lambda} remains of order
$\tol^2$.
Thus, this choice of $\lambda_\tol$ makes
Theorem~\ref{thm:slas-ridge-quasi-opt} yield a squared population error
$\err(\hat\Pi_{r_\tol})^2\lesssim \tol^2$ for the empirical active subspace, up to the fixed
quasi-optimality constant.
Under the bounded-gradient estimate \eqref{eq:Klambda-bounded-gradient}, the
corresponding sampling condition scales as
\begin{equation}
    \label{eq:samp-complexity-bounded-gradient}
    M
    \gtrsim
    \frac{B_\rho(\nabla f)^2}{\tol^2} r_\tol \log r_\tol
\end{equation}
up to constants and probability factors.

However, under the sole boundedness assumption our analysis still gives a Monte Carlo-type
dependence on the tolerance.
The advantage of the formulation through the {regularized} inverse Christoffel function is that
it can be sharpened when additional structure of $\nabla f$ is available.
In the next section we therefore impose stronger assumptions on the gradient,
with the goal of improving the bound on $K_{\rho,\lambda}(\nabla f)$ beyond the crude estimate
$K_{\rho,\lambda}(\nabla f)\le B_\rho(\nabla f)^2/\lambda$.

\section{Sample complexity from smoothness}
\label{sec:sample-complexity-smoothness}

We keep the notation of the previous sections.
In particular, $g=\nabla f$ denotes the gradient field,
$C$ denotes its covariance operator,
and $C_\lambda$ denotes the regularized covariance operator in
\eqref{eq:regularized-covariance}.
The quasi-optimality estimate in Theorem~\ref{thm:slas-ridge-quasi-opt} separates the
sampling condition, controlled by the {regularized} coherence constant
$K_{\rho,\lambda}(g)$ in \eqref{eq:ridge-coherence-effective-dimension}, from
the {regularization-scale} error term.
The purpose of this section is to bound this coherence quantity, and the
corresponding active subspace tail energy $\err(\Pi_r)^2$ from smoothness
assumptions on the gradient field.

The elementary estimate \eqref{eq:Klambda-bounded-gradient} gives the
worst-case behavior $K_{\rho,\lambda}(g)\lesssim \lambda^{-1}$.
This estimate is robust with respect to small covariance eigenvalues,
but it does not exploit regularity of the map $\yb\mapsto g(\yb)$.
Here we use a reproducing kernel Hilbert space
and separate two related effects: {i) boundedness} of the {RKHS covariance} and spectral summability of the RKHS
embedding control the {regularized} coherence; {ii) the} same spectral summability, combined with Schatten summability of the
    {RKHS covariance} across the coordinate index, controls the decay of the
active subspace tail energy.
The combination gives an a priori sample-complexity estimate as a function of
the tolerance.

Let $(\calH,\inp{\cdot}{\cdot}_{\calH})$ be a real RKHS on $\Gamma$ with
measurable reproducing kernel $\kappa$.
Assume that {its kernel diagonal is positive $\mu$-almost everywhere and
        integrable:}
\begin{equation*}
    {\kappa(\yb,\yb)>0
        \quad
        \text{for $\mu$-a.e. } \yb\in\Gamma,
        \qquad
        \int_\Gamma \kappa(\yb,\yb)\,\dif\mu(\yb)<\infty.}
\end{equation*}
{The diagonal integrability condition makes} the canonical embedding
$\imath:\calH\to L^2_\mu(\Gamma)$ {Hilbert--Schmidt, and hence} compact.
    {Assume in addition that $\imath$ is} injective, and identify $\calH$ with its
image under $\imath$.
The associated integral operator $T:=\imath\imath^*$ on
$L^2_\mu(\Gamma)$ is given by
\begin{equation}
    (T h)(\yb)
    :=
    \int_\Gamma
    \kappa(\yb,\zb)h(\zb)\,\dif\mu(\zb).
\end{equation}
{Let} $\{(\gamma_n,\phi_n)\}_{n\geq 1}$ {denote its nonzero eigenpairs}, where
$\gamma_n>0${, $\gamma_n\to0$ as $n\to\infty$,} and
$\{\phi_n\}_{n\geq 1}$ is an orthonormal basis of {the closure of
        $\imath(\calH)$ in} $L^2_\mu(\Gamma)$.
In these spectral coordinates,
\begin{equation}
    \label{eq:smoothness-rkhs-spectral-characterization}
    \calH
    =
    \left\{
    p=\sum_{n\geq 1}p_n\phi_n\in L^2_\mu(\Gamma)
    :
    \norm{p}_{\calH}^2
    :=
    \sum_{n\geq 1}\gamma_n^{-1}\abs{p_n}^2
    <
    \infty
    \right\},
    \qquad
    p_n
    :=
    \inp{p}{\phi_n}_{L^2_\mu(\Gamma)}.
\end{equation}
The functions $\{\gamma_n^{1/2}\phi_n\}_{n\geq1}$ form an orthonormal basis
of $\calH$, so the singular values of $\imath$ are
$\{\gamma_n^{1/2}\}_{n\geq1}$.
With the corresponding pointwise representatives of the functions $\phi_n$,
\begin{equation}
    \label{eq:smoothness-mercer-kernel}
    \kappa(\yb,\zb)
    =
    \sum_{n\geq 1}
    \gamma_n\phi_n(\yb)\phi_n(\zb)
\end{equation}
pointwise on $\Gamma\times\Gamma$.
For $\theta\in(0,1]$, define the $\theta$-kernel diagonal and its trace by
\begin{equation}
    \label{eq:smoothness-theta-diagonal}
    \kappa^\theta(\yb)
    :=
    \sum_{n\geq 1}
    \gamma_n^\theta\phi_n(\yb)^2,
    \qquad
    Z_\theta
    :=
    \sum_{n\geq 1}
    \gamma_n^\theta.
\end{equation}
We call $\theta$ an admissible exponent if
\begin{equation}
    \label{eq:smoothness-admissible-scale}
    0<\kappa^\theta(\yb)<\infty
    \quad
    \text{for $\mu$-a.e. } \yb\in\Gamma,
    \qquad
    Z_\theta<\infty.
\end{equation}
{In particular,} ${\theta=1}$ {is always admissible}.
{Indeed}, {\eqref{eq:smoothness-mercer-kernel} and Tonelli's theorem give}
\begin{equation*}
    {\kappa^1(\yb)
        =
        \kappa(\yb,\yb),
        \qquad
        Z_1}
    =
    \int_\Gamma {\kappa(\yb,\yb)\,\dif\mu}(\yb)
    {<
        \infty.}
\end{equation*}
{Notice that, up to restricting $\Gamma$ to a set of full $\mu$-measure, the
conditions in \eqref{eq:smoothness-admissible-scale} are equivalent to requiring
that}
\begin{equation*}
    {\calH^\theta
        :=
        \left\{
        p=\sum_{n\geq1}p_n\phi_n\in L^2_\mu(\Gamma)
        :
        \norm{p}_{\calH^\theta}^2
        :=
        \sum_{n\geq1}\gamma_n^{-\theta}\abs{p_n}^2
        <
        \infty
        \right\}}
\end{equation*}
{is an RKHS whose reproducing kernel has strictly positive, integrable diagonal
$\kappa^\theta$.
Indeed, Tonelli's theorem gives}
\begin{equation*}
    {\int_\Gamma \kappa^\theta}(\yb)\,\dif\mu(\yb)
        =
        {Z_\theta}.
\end{equation*}

\subsection{\texorpdfstring{{Regularized}}{Regularized} coherence from a bounded
    \texorpdfstring{{RKHS covariance}}{RKHS covariance}}

{For} ${v\in \ell^2(d)}$, {define the} scalar projection
\begin{equation}
    \label{eq:smoothness-scalar-projection}
    p_v(\yb)
    :=
    \inp{v}{g(\yb)}_{\ell^2(d)}.
\end{equation}
{We make the following smoothness assumption on $g$.}

\begin{assumption}{(Bounded RKHS covariance)
        \label{ass:smoothness-covariance-bounded}
        Every scalar projection $p_v$} belongs to $\calH$, and {there exists a bounded
            positive operator $H\in\mathcal L(\ell^2(d))$ such that}
    \begin{equation}
        {\label{eq:smoothness-covariance-operator}}
        \inp{v}{{Hu}}_{\ell^2(d)}
        {=
            \inp{p_v}{p_u}_{\calH},
            \qquad
            u,v\in\ell^2(d)}.
    \end{equation}
    {We call $H$ the RKHS covariance of $g$ associated with $\calH$.}
\end{assumption}

{\noindent
The above assumption is equivalent to requiring} that the scalar-projection operator
\begin{equation}
    G:\ell^2(d)\to\calH,
    \qquad
    Gv
    :=
    p_v,
\end{equation}
is bounded and satisfies
\begin{equation}
    H
    =
    G^*G,
    \qquad
    C
    =
    (\imath G)^*(\imath G).
\end{equation}
{Moreover, we have}
\begin{equation*}
    {\sup_{\norm{v}_{\ell^2(d)}=1} \norm{p_v}_{\mathcal H}^2 \leq \norm{H}_{\mathcal L(\ell^2(d))}.}
\end{equation*}
Whenever point values are used below, we take the compatible representative
$g(\yb):=G^*\kappa(\cdot,\yb)$.
It agrees with the original gradient field $\mu$-a.e. and ensures that
$(Gv)(\yb)=\inp{v}{g(\yb)}_{\ell^2(d)}$ simultaneously for every $v$.
The standing assumption $g\neq0$ and the identities above imply that
$H\neq0$.

For $\alpha>0$, define the {regularized} RKHS norm
\begin{equation}
    \label{eq:smoothness-ridge-rkhs-norm}
    \norm{p}_{\alpha}^2
    :=
    \norm{p}_{L^2_\mu(\Gamma)}^2
    +
    \alpha\norm{p}_{\calH}^2
    =
    \sum_{n\geq 1}
    \frac{\gamma_n+\alpha}{\gamma_n}
    \abs{\inp{p}{\phi_n}_{L^2_\mu(\Gamma)}}^2.
\end{equation}
The reproducing kernel of this {regularized} RKHS, which we call the {regularized} kernel, is
\begin{equation}
    \label{eq:smoothness-ridge-diagonal-kernel}
    \kappa_\alpha(\yb,\zb)
    :=
    \sum_{n\geq 1}
    \frac{\gamma_n}{\gamma_n+\alpha}
    \phi_n(\yb)\phi_n(\zb).
\end{equation}
For its {regularized} kernel diagonal, we use the shorthand
\begin{equation}
    \label{eq:smoothness-diagonal-shorthand}
    \kappa_\alpha(\yb)
    :=
    \kappa_\alpha(\yb,\yb).
\end{equation}
{The pointwise kernel expansion and the standing nondegeneracy assumption give,
for every $\alpha>0$,}
\begin{equation*}
    {0
        <
        \kappa_\alpha(\yb)
        \leq
        \alpha^{-1}\kappa(\yb,\yb)
        <
        \infty
        \quad
        \text{for $\mu$-a.e. } \yb\in\Gamma.}
\end{equation*}

\begin{lemma}[{Regularized} kernel diagonal bound for the {regularized} coherence]
    \label{prop:smoothness-ridge-coherence}
    Suppose that Assumption~\ref{ass:smoothness-covariance-bounded} holds.
    Let $\lambda>0$ and set
    \begin{equation}
        \label{eq:smoothness-ridge-alpha-lambda}
        \alpha_\lambda
        :=
        \frac{\lambda}{\norm{H}_{\mathcal L(\ell^2(d))}}.
    \end{equation}
    Then, for every sampling measure $\rho$ such that $\mu\ll\rho$,
    \begin{equation}
        \label{eq:smoothness-pointwise-ridge-bound}
        k_{\rho,\lambda}(\yb)
        \leq
        w(\yb)\kappa_{\alpha_\lambda}(\yb)
        \quad
        \text{for $\rho$-a.e. } \yb\in\Gamma.
    \end{equation}
    Consequently,
    \begin{equation}
        \label{eq:smoothness-generic-coherence-bound}
        K_{\rho,\lambda}(g)
        \leq
        \rho\text{-}\operatorname*{ess\,sup}_{\yb\in\Gamma}
        w(\yb)\kappa_{\alpha_\lambda}(\yb).
    \end{equation}
\end{lemma}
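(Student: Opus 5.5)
The plan is to write the regularized inverse Christoffel function variationally and compare two quadratic forms on $\ell^2(d)$. Since $C_\lambda^{-1/2}$ is self-adjoint (extended by zero on $\ker C$), for $\mu$-a.e. $\yb$ we have
\begin{equation*}
  \|C_\lambda^{-1/2}g(\yb)\|_{\ell^2(d)}^2
  =
  \sup_{u\neq 0}
  \frac{\abs{\inp{u}{g(\yb)}_{\ell^2(d)}}^2}{\inp{u}{(C+\lambda I)u}_{\ell^2(d)}},
\end{equation*}
where the supremum runs over $u\in\mathcal R$, or over all of $\ell^2(d)$ with $C+\lambda I$ on $\ell^2(d)$, which only enlarges the quotient set harmlessly since $g(\yb)\in\mathcal R$. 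Setting $u=C_\lambda^{1/2}v$ reduces this to the dual norm characterization. Similarly, the regularized kernel diagonal is the squared norm of point evaluation in the regularized RKHS with norm $\norm{\cdot}_\alpha$:
\begin{equation*}
  \kappa_\alpha(\yb)
  =
  \sup_{p\neq 0}\frac{\abs{p(\yb)}^2}{\norm{p}_\alpha^2},
\end{equation*}
which follows from the reproducing property and Cauchy--Schwarz, because $\kappa_\alpha(\cdot,\yb)$ is the representer of evaluation at $\yb$.

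The key step is to test the second supremum with $p=p_u$. Using the compatible representative $g(\yb)=G^*\kappa(\cdot,\yb)$, we get $p_u(\yb)=\inp{u}{g(\yb)}$ pointwise. Moreover
\begin{equation*}
  \norm{p_u}_{L^2_\mu}^2=\inp{u}{Cu},
  \qquad
  \norm{p_u}_\calH^2=\inp{u}{Hu}\le\norm{H}\,\norm{u}^2 .
\end{equation*}
With $\alpha=\alpha_\lambda=\lambda/\norm{H}$, this gives
\begin{equation*}
  \norm{p_u}_{\alpha_\lambda}^2\le \inp{u}{Cu}+\lambda\norm{u}^2=\inp{u}{C_\lambda u}.
\end{equation*}
Hence, for every $u$ with $\inp{u}{C_\lambda u}>0$,
\begin{equation*}
  \frac{\abs{\inp{u}{g(\yb)}}^2}{\inp{u}{C_\lambda u}}
  \le
  \frac{\abs{p_u(\yb)}^2}{\norm{p_u}_{\alpha_\lambda}^2}
  \le
  \kappa_{\alpha_\lambda}(\yb).
\end{equation*}
If $p_u=0$ the numerator vanishes and there is nothing to prove. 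Taking the supremum over $u$ and multiplying by $w(\yb)$ gives \eqref{eq:smoothness-pointwise-ridge-bound}.

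The essential supremum bound \eqref{eq:smoothness-generic-coherence-bound} then follows immediately. The only care needed is the null-set bookkeeping. The pointwise identity $p_u(\yb)=\inp{u}{g(\yb)}$ holds for all $u$ simultaneously with the compatible representative. That representative agrees with the Borel version $\mu$-a.e., hence $\rho$-a.e. on $\{w>0\}$, and both sides vanish on $\{w=0\}$ by convention.

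The main obstacle is making sure the evaluation-functional formula $\kappa_\alpha(\yb)=\sup_p \abs{p(\yb)}^2/\norm{p}_\alpha^2$ holds pointwise for $\mu$-a.e. $\yb$. This requires the pointwise Mercer expansion \eqref{eq:smoothness-mercer-kernel} and the fact that $\calH$ with norm $\norm{\cdot}_\alpha$ is an RKHS with kernel \eqref{eq:smoothness-ridge-diagonal-kernel}. I would verify this via the spectral coordinates. Writing $p(\yb)=\sum_n p_n\phi_n(\yb)$ pointwise, which is valid for $p\in\calH$ since $\sum_n\gamma_n\phi_n(\yb)^2=\kappa(\yb,\yb)<\infty$, the Cauchy--Schwarz inequality with weights $(\gamma_n+\alpha)/\gamma_n$ yields
\begin{equation*}
  \abs{p(\yb)}^2\le\norm{p}_\alpha^2\,\kappa_\alpha(\yb).
\end{equation*}
Only this inequality direction is needed.
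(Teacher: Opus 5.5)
Your proposal is correct and follows essentially the same route as the paper. Both arguments combine three ingredients: the Rayleigh-quotient characterization of $\norm{C_\lambda^{-1/2}g(\yb)}_{\ell^2(d)}^2$, the domination $\norm{p_v}_{\alpha_\lambda}^2=\inp{v}{Cv}_{\ell^2(d)}+\alpha_\lambda\inp{v}{Hv}_{\ell^2(d)}\leq\inp{v}{C_\lambda v}_{\ell^2(d)}$, and the reproducing-kernel bound $\abs{p_v(\yb)}^2\leq\kappa_{\alpha_\lambda}(\yb)\norm{p_v}_{\alpha_\lambda}^2$. Your spectral-coordinate proof of the evaluation bound and your null-set bookkeeping between the compatible and Borel representatives of $g$ are details the paper leaves implicit.
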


\begin{proof}
    Since $g(\yb)\in\mathcal R$ for $\mu$-a.e. $\yb\in\Gamma$, the Hilbert-space
    Rayleigh quotient for $C_\lambda$ gives
    \begin{equation}
        \label{eq:smoothness-ridge-rayleigh}
        \norm{C_\lambda^{-1/2}g(\yb)}_{\ell^2(d)}^2
        =
        \sup_{v\in\mathcal R\setminus\{0\}}
        \frac{
            \abs{\inp{v}{g(\yb)}_{\ell^2(d)}}^2
        }{
            \inp{v}{C_\lambda v}_{\ell^2(d)}
        }.
    \end{equation}
    For fixed $v\in\mathcal R$, the {definition of the regularized norm} and {the
            factorizations $H=G^*G$ and $C=(\imath G)^*(\imath G)$} imply
    \begin{equation}
        \label{eq:smoothness-ridge-norm-identity}
        \norm{p_v}_{\alpha_\lambda}^2
        =
        \inp{v}{Cv}_{\ell^2(d)}
        +
        \alpha_\lambda\inp{v}{Hv}_{\ell^2(d)}.
    \end{equation}
    By the definition of $\alpha_\lambda$,
    \begin{equation}
        \label{eq:smoothness-ridge-norm-domination}
        \norm{p_v}_{\alpha_\lambda}^2
        \leq
        \inp{v}{Cv}_{\ell^2(d)}
        +
        \lambda\norm{v}_{\ell^2(d)}^2
        =
        \inp{v}{C_\lambda v}_{\ell^2(d)}.
    \end{equation}
    The reproducing property in the {regularized} RKHS gives
    \begin{equation}
        \label{eq:smoothness-ridge-point-evaluation}
        \abs{\inp{v}{g(\yb)}_{\ell^2(d)}}^2
        =
        \abs{p_v(\yb)}^2
        \leq
        \kappa_{\alpha_\lambda}(\yb)
        \norm{p_v}_{\alpha_\lambda}^2.
    \end{equation}
    Combining \eqref{eq:smoothness-ridge-rayleigh},
    \eqref{eq:smoothness-ridge-norm-domination}, and
    \eqref{eq:smoothness-ridge-point-evaluation}, and multiplying by
    $w(\yb)$, proves \eqref{eq:smoothness-pointwise-ridge-bound}.
    Taking the essential supremum with respect to $\rho$ gives
    \eqref{eq:smoothness-generic-coherence-bound}.
\end{proof}

Recall that $d_{\mathrm{eff}}(t,\xi)$ denotes the effective dimension of the
supplied sequence $\xi$, whereas the one-argument notation
$d_{\mathrm{eff}}(\lambda)$ denotes the covariance effective dimension by
convention.
For $\alpha>0$, the corresponding kernel effective dimension is
\begin{equation}
    \label{eq:smoothness-ridge-effective-dimension}
    d_{\mathrm{eff}}(\alpha,\gamma)
    :=
    \int_\Gamma
    \kappa_\alpha(\yb)\,\dif\mu(\yb)
    =
    \sum_{n\geq 1}
    \frac{\gamma_n}{\gamma_n+\alpha}.
\end{equation}
{The above satisfies the bound}
\begin{equation*}
    {d_{\mathrm{eff}}(\alpha,\gamma)
    \leq
    \alpha^{-1}Z_1
    <
    \infty.}
\end{equation*}
{We can hence define a sampling measure whose density is proportional to the regularized kernel
diagonal}
\begin{equation}
    {\label{eq:smoothness-ridge-diagonal-proposal}
        \dif\rho_\alpha(\yb)
        :=
        \frac{\kappa_\alpha(\yb)}
        {d_{\mathrm{eff}}(\alpha,\gamma)}
        \,\dif\mu(\yb).}
\end{equation}
{The standing assumptions imply that $\rho_\alpha\sim\mu$.
More generally, let $\rho\sim\mu$ be a sampling measure with reciprocal
density $w=\dif\mu/\dif\rho$.
We say that $\rho$ is quasi-optimal} for the {regularized} kernel diagonal {at scale
        $\alpha$ with constant $c_2\geq1$ if}
\begin{equation}
    {\label{eq:smoothness-quasi-optimal-proposal}
    \rho\text{-}\operatorname*{ess\,sup}_{\yb\in\Gamma}
    w(\yb)\kappa_\alpha(\yb)
    \leq
    c_2d_{\mathrm{eff}}(\alpha,\gamma).}
\end{equation}

\begin{proposition}{(Coherence bound for regularized-kernel quasi-optimal sampling)}
    \label{prop:smoothness-ridge-diagonal-sampling}
    Suppose that Assumption~\ref{ass:smoothness-covariance-bounded} holds.
    Let $\lambda>0$ and let
    $\alpha_\lambda=\lambda/\norm{H}_{\mathcal L(\ell^2(d))}$.
        {Let} ${\rho}$ {be quasi-optimal} for the {regularized} kernel diagonal {at scale}
    $\alpha_\lambda$ {with constant $c_2$}.
    Then
    \begin{equation}
        \label{eq:smoothness-ridge-diagonal-coherence}
        K_{\rho,\lambda}(g)
        \leq
        {c_2d_{\mathrm{eff}}}(\alpha_\lambda,\gamma).
    \end{equation}
\end{proposition}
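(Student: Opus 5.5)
This is essentially a direct combination of Lemma~\ref{prop:smoothness-ridge-coherence} with the definition of quasi-optimality. The plan is as follows.

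First, I check that Lemma~\ref{prop:smoothness-ridge-coherence} applies. Assumption~\ref{ass:smoothness-covariance-bounded} holds by hypothesis. Since $\rho$ is quasi-optimal, by definition $\rho\sim\mu$, so in particular $\mu\ll\rho$. The weight $w=\dif\mu/\dif\rho$ is then well defined, and the Borel version of $g$ can be fixed as in Section~\ref{sec:slas}. Invoking the lemma at scale $\lambda$, with $\alpha_\lambda=\lambda/\norm{H}_{\mathcal L(\ell^2(d))}$, gives the pointwise bound
\begin{equation*}
k_{\rho,\lambda}(\yb)\leq w(\yb)\kappa_{\alpha_\lambda}(\yb)
\end{equation*}
for $\rho$-a.e.\ $\yb$. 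Taking the $\rho$-essential supremum then yields \eqref{eq:smoothness-generic-coherence-bound}.

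Second, I apply the quasi-optimality condition \eqref{eq:smoothness-quasi-optimal-proposal} at the scale $\alpha=\alpha_\lambda$. It bounds the right-hand side of \eqref{eq:smoothness-generic-coherence-bound} by $c_2\,d_{\mathrm{eff}}(\alpha_\lambda,\gamma)$, which is exactly \eqref{eq:smoothness-ridge-diagonal-coherence}.

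The only point needing care, and hence the main obstacle, is that the two essential suprema are taken with respect to the same measure $\rho$, and that the compatible representative $g(\yb)=G^*\kappa(\cdot,\yb)$ used in the lemma may differ from the chosen Borel version of $g$. I would handle this as follows. The two versions agree $\mu$-a.e. Since $\rho\sim\mu$, they also agree $\rho$-a.e., so $k_{\rho,\lambda}$ is unchanged up to a $\rho$-null set. Therefore $K_{\rho,\lambda}(g)$ does not depend on which version is used, and the chain of inequalities is valid.

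As a remark, no finiteness assumption on $K_{\rho,\lambda}$ is needed: the bound itself shows finiteness, because $d_{\mathrm{eff}}(\alpha_\lambda,\gamma)\leq \alpha_\lambda^{-1}Z_1<\infty$. The canonical choice $\rho=\rho_{\alpha_\lambda}$ from \eqref{eq:smoothness-ridge-diagonal-proposal} satisfies the quasi-optimality condition with $c_2=1$, since in that case $w\kappa_{\alpha_\lambda}\equiv d_{\mathrm{eff}}(\alpha_\lambda,\gamma)$.
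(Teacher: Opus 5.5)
Your proposal is correct and matches the paper's proof, which likewise combines the pointwise estimate of Lemma~\ref{prop:smoothness-ridge-coherence} with the quasi-optimality condition \eqref{eq:smoothness-quasi-optimal-proposal} at scale $\alpha_\lambda$. Your extra remarks are also correct but not needed for the result: the choice of representative of $g$ is harmless because $\rho\sim\mu$, and $\rho_{\alpha_\lambda}$ attains $c_2=1$.
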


\begin{proof}
    {Combine Lemma~\ref{prop:smoothness-ridge-coherence} with
        \eqref{eq:smoothness-quasi-optimal-proposal}.}
\end{proof}

{Notice that for every $\gamma_n>0$, $\alpha>0$, and $0<\theta\leq1$}, we have
\begin{equation}
    {\label{eq:smoothness-ridge-theta-scalar-comparison}
        \frac{\gamma_n}{\gamma_n+\alpha}}
    {\leq
        \alpha^{-\theta}\gamma_n^\theta.}
\end{equation}
{Consequently,
\begin{equation}
    \label{eq:smoothness-ridge-theta-dimension-comparison}
    \kappa_{\alpha}(\yb)
    \leq
    \alpha^{-\theta}\kappa^\theta(\yb),
    \quad
    d_{\mathrm{eff}}(\alpha,\gamma)
    \leq
    \alpha^{-\theta}Z_\theta.
\end{equation}
}
{Thus,} ${\kappa^\theta}$ {provides an upper bound for} the {regularized} kernel diagonal
    {whose profile} is {independent of the regularization scale}.
For an admissible exponent $\theta$, define the $\theta$-kernel diagonal measure
\begin{equation}
    \label{eq:smoothness-diagonal-proposal}
    \dif\rho^\theta(\yb)
    =
    \frac{\kappa^\theta(\yb)}{Z_\theta}\,\dif\mu(\yb).
\end{equation}
Admissibility of $\theta$ implies that $\rho^\theta\sim\mu$.

\begin{corollary}{($\theta$-kernel diagonal {sampling})}
    \label{prop:smoothness-diagonal-sampling}
    Suppose that Assumption~\ref{ass:smoothness-covariance-bounded} holds.
    Let $\lambda>0$, let $\theta\in(0,1]$ be an admissible exponent, and set
    $\alpha_\lambda=\lambda/\norm{H}_{\mathcal L(\ell^2(d))}$.
    Then
    \begin{equation}
        \label{eq:smoothness-ridge-to-theta-effective-dimension}
        d_{\mathrm{eff}}(\alpha_\lambda,\gamma)
        \leq
        Z_\theta
        \left(
        \frac{\norm{H}_{\mathcal L(\ell^2(d))}}{\lambda}
        \right)^\theta.
    \end{equation}
    {Moreover}, $\rho{^\theta}$ {is quasi-optimal for the regularized kernel diagonal at
            scale} ${\alpha}_{\lambda}$ {with constant}
    \begin{equation} \label{eq:smoothness-lambda-damped-coherence-constant}
        {c_2
            =
            \frac{\alpha_\lambda^{-\theta}Z_\theta}
            {d_{\mathrm{eff}}(\alpha_\lambda,\gamma)},}
    \end{equation}
    and
    \begin{equation}
        {\label{eq:smoothness-lambda-damped-coherence}}
        K_{\rho{^\theta},\lambda}(g)
        \leq
        Z_\theta
        \left(
        \frac{\norm{H}_{\mathcal L(\ell^2(d))}}{\lambda}
        \right)^\theta.
    \end{equation}
\end{corollary}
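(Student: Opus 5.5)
The three claims follow from the scalar comparison \eqref{eq:smoothness-ridge-theta-scalar-comparison}, used pointwise and after integration, together with Proposition~\ref{prop:smoothness-ridge-diagonal-sampling}.

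First, \eqref{eq:smoothness-ridge-to-theta-effective-dimension} is the second inequality in \eqref{eq:smoothness-ridge-theta-dimension-comparison} with $\alpha=\alpha_\lambda$. Since $\alpha_\lambda^{-\theta}=(\norm{H}_{\mathcal L(\ell^2(d))}/\lambda)^\theta$, this gives the bound directly. For completeness I would recall where \eqref{eq:smoothness-ridge-theta-scalar-comparison} comes from: $\gamma_n/(\gamma_n+\alpha)\leq\min\{1,\gamma_n/\alpha\}\leq(\gamma_n/\alpha)^\theta$ for $\theta\in(0,1]$. Summing over $n$, with $Z_\theta<\infty$ by admissibility, gives the effective-dimension bound.

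Second, for quasi-optimality of $\rho^\theta$ I compute the weight. Admissibility gives $0<\kappa^\theta<\infty$ $\mu$-a.e., so $\rho^\theta\sim\mu$ and $w=\dif\mu/\dif\rho^\theta=Z_\theta/\kappa^\theta$ $\rho^\theta$-a.e. The pointwise comparison $\kappa_{\alpha_\lambda}(\yb)\leq\alpha_\lambda^{-\theta}\kappa^\theta(\yb)$ then yields
\[
w(\yb)\kappa_{\alpha_\lambda}(\yb)\leq\frac{Z_\theta}{\kappa^\theta(\yb)}\,\alpha_\lambda^{-\theta}\kappa^\theta(\yb)=\alpha_\lambda^{-\theta}Z_\theta
\]
for $\rho^\theta$-a.e.\ $\yb$. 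This is \eqref{eq:smoothness-quasi-optimal-proposal} with $c_2=\alpha_\lambda^{-\theta}Z_\theta/d_{\mathrm{eff}}(\alpha_\lambda,\gamma)$.

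The definition requires $c_2\geq1$, and this is exactly the first claim \eqref{eq:smoothness-ridge-to-theta-effective-dimension}. The ratio is well defined because $d_{\mathrm{eff}}(\alpha_\lambda,\gamma)>0$: $\kappa_{\alpha}>0$ $\mu$-a.e. and $\norm{H}>0$ since $H\neq0$.

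Finally, Proposition~\ref{prop:smoothness-ridge-diagonal-sampling} gives $K_{\rho^\theta,\lambda}(g)\leq c_2\,d_{\mathrm{eff}}(\alpha_\lambda,\gamma)=\alpha_\lambda^{-\theta}Z_\theta$, which is \eqref{eq:smoothness-lambda-damped-coherence}.

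There is no real obstacle here. The only points needing care are measure-theoretic: the pointwise series representatives must be used consistently, so that the diagonal comparison holds wherever the Mercer expansion \eqref{eq:smoothness-mercer-kernel} is valid. The resulting null sets are $\mu$-null, hence also $\rho^\theta$-null because $\rho^\theta\sim\mu$, and so they do not affect the essential supremum.
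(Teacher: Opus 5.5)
Your proposal is correct and follows essentially the same route as the paper: the effective-dimension bound from \eqref{eq:smoothness-ridge-theta-dimension-comparison} at $\alpha=\alpha_\lambda$, then the pointwise estimate $w(\yb)\kappa_{\alpha_\lambda}(\yb)\leq\alpha_\lambda^{-\theta}Z_\theta$ normalized by $d_{\mathrm{eff}}(\alpha_\lambda,\gamma)$, and finally Proposition~\ref{prop:smoothness-ridge-diagonal-sampling}. Your remark that the requirement $c_2\geq1$ is equivalent to the first claim is a small but useful point that the paper leaves implicit.
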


\begin{proof}
    The kernel effective-dimension bound
    \eqref{eq:smoothness-ridge-to-theta-effective-dimension} follows from
    \eqref{eq:smoothness-ridge-theta-dimension-comparison} with
    $\alpha=\alpha_\lambda$.
        {If $w^\theta=\dif\mu/\dif\rho^\theta$, then}
    \begin{equation*}
        {w^\theta(\yb)\kappa_{\alpha_\lambda}(\yb)
            \leq
            \alpha_\lambda^{-\theta}Z_\theta
            \quad
            \text{for $\rho^\theta$-a.e. }\yb\in\Gamma,}
    \end{equation*}
    {and normalizing by $d_{\mathrm{eff}}(\alpha_\lambda,\gamma)$ gives \eqref{eq:smoothness-lambda-damped-coherence-constant}.}
    Proposition~\ref{prop:smoothness-ridge-diagonal-sampling} { then gives the coherence estimate}.
\end{proof}

{

\begin{remark}[Choosing between the kernel diagonal measures]
    Sampling from the exact regularized kernel diagonal measure $\rho_{\alpha_\lambda}$ gives the sharper coherence bound based on the kernel effective dimension at scale $\alpha_\lambda$, which satisfies
    \begin{equation*}
        d_{\mathrm{eff}}(\alpha_\lambda,\gamma) \leq \min_{\theta \, \mathrm{admissible}} Z_\theta \left( \frac{\norm{H}_{\mathcal L(\ell^2(d))}}{\lambda} \right)^\theta.
    \end{equation*}
    The price is that $\alpha_\lambda$ must be fixed before sampling and, when $\lambda$ is chosen from a target rank $r$, this requires advance
    estimates of both $r$ and $\norm{H}_{\mathcal L(\ell^2(d))}$.
    Conservative upper estimates are sufficient since they produce a smaller value
    of $\alpha_\lambda$, which preserves the regularized-norm domination, at the cost
    of a larger kernel effective dimension and potentially more samples.
    By contrast, $\rho^\theta$ depends only on the RKHS { embedding singular values} and the
    selected exponent $\theta$, so it can be constructed without estimating
    $r$ or $\norm{H}$ and reused across regularization levels.
    This convenience requires committing to a specific $\theta$ before sampling and a larger coherence upper bound
    $Z_\theta(\norm{H}_{\mathcal L(\ell^2(d))}/\lambda)^\theta$.
    In terms of asymptotic complexity as $\lambda\to0$, one has interest in taking $\theta$ as small as possible while still being admissible. However, this could lead to a large constant $Z_\theta$ in the coherence bound and a poor pre-asymptotic sample complexity.
\end{remark}}

{To turn the preceding coherence bounds into an a priori sampling prescription
for a tolerance $\tol>0$, we need an a priori sufficient rank $r_\tol$
satisfying $\err(\Pi_{r_\tol})^2\leq\tol^2$.
As discussed at the end of Section~\ref{sec:slanal}, controlling the regularization-scale
contribution then requires $\lambda\lesssim\tol^2/r_\tol$, with
$\lambda_\tol=\tol^2/r_\tol$ as the natural choice.
The next subsection supplies the missing rank estimate through an a priori
decay bound for $\err(\Pi_r)^2$.}

\subsection{Active subspace tail energy decay from smoothness}

The preceding subsection used boundedness of $H$ to compare the {regularized} coherence
with the {regularized} kernel diagonal, and spectral summability of the RKHS embedding
to control the resulting bound.
We now combine the same spectral summability with a stronger summability
assumption on the eigenvalues of $H$.
This yields an a priori decay estimate for $\err(\Pi_r)^2$ and therefore
determines how large the target rank must be to meet a prescribed tolerance.

For a Hilbert space $X$ and $0<\beta<\infty$, let $\calS_\beta(X)$ denote the
Schatten-$\beta$ class on $X$, namely the set of compact operators on $X$ whose singular values are $\ell^\beta$-summable.
For $\beta=\infty$, we use the convention
\begin{equation}
    \label{eq:smoothness-schatten-infty}
    \calS_\infty(X)
    :=
    \mathcal L(X),
    \qquad
    \norm{\cdot}_{\calS_\infty(X)}
    :=
    \norm{\cdot}_{\mathcal L(X)}.
\end{equation}
For $0<\beta<1$, $\norm{\cdot}_{\calS_\beta}$ is understood as the usual
Schatten quasi-norm.

\begin{assumption}[Schatten summability of the {RKHS covariance}]
    \label{ass:smoothness-covariance-summability}
    {Assumption~\ref{ass:smoothness-covariance-bounded} holds, and there} exist an
    admissible exponent $\theta\in(0,1]$ and
    $\beta\in(0,\infty]$ such that
    \begin{equation}
        \label{eq:smoothness-tail-assumption}
        H\in\calS_\beta(\ell^2(d)).
    \end{equation}
\end{assumption}

\noindent
{
    The exponent $\theta$ quantifies the parametric regularity encoded by the RKHS
    embedding: the smaller $\theta$ can be chosen while remaining admissible, the
    stronger the parametric regularity.
    The exponent $\beta$ quantifies the additional Schatten summability of
    $H=G^*G$ across the coordinate index.
    Assumption~\ref{ass:smoothness-covariance-summability} therefore combines
    parametric regularity with summability across the coordinate directions, and
    both enter the tail estimate below.}
We use the convention $1/\infty=0$ and define
\begin{equation}
    \label{eq:smoothness-tail-exponent}
    q_{\theta,\beta}
    :=
    \frac{1}{\theta}
    +
    \frac{1}{\beta}
    -
    1.
\end{equation}

\begin{proposition}[Smoothness and active subspace tail energy decay]
    \label{prop:smoothness-active-subspace-tail}
    Suppose that Assumption~\ref{ass:smoothness-covariance-summability} holds {for
            exponents $\theta$ and $\beta$}.
    If $q_{\theta,\beta}>0$, then, for every $r\geq 1$,
    \begin{equation}
        \label{eq:smoothness-active-subspace-tail}
        \err(\Pi_r)^2
        \leq
        Z_\theta^{1/\theta}
        \norm{H}_{\calS_\beta(\ell^2(d))}
        r^{-q_{\theta,\beta}}.
    \end{equation}
\end{proposition}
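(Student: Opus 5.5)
The plan is to bound the Schatten quasi-norm of $C$ in a class $\calS_p(\ell^2(d))$ with $p<1$, and then turn this into a tail estimate with a Stechkin-type inequality. The starting point is the factorization $C=(\imath G)^*(\imath G)$ from Assumption~\ref{ass:smoothness-covariance-bounded}. Since $C\succeq 0$, its eigenvalues $\sigma_j^2$ are the squares of the singular values of $\imath G:\ell^2(d)\to L^2_\mu(\Gamma)$. Hence $\norm{C}_{\calS_p}=\norm{\imath G}_{\calS_{2p}}^2$.

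\textbf{Step 1 (Schatten norms of the two factors).} The singular values of $\imath$ are $\gamma_n^{1/2}$, so
\[
\norm{\imath}_{\calS_{2\theta}}^{2\theta}=\sum_{n\geq1}\gamma_n^\theta=Z_\theta,
\qquad\text{that is,}\qquad
\norm{\imath}_{\calS_{2\theta}}^2=Z_\theta^{1/\theta}.
\]
The singular values of $G$ are the square roots of the eigenvalues of $H=G^*G$, so $\norm{G}_{\calS_{2\beta}}^2=\norm{H}_{\calS_\beta}$. For $\beta=\infty$ this reduces to $\norm{G}_{\mathcal L}^2=\norm{H}_{\mathcal L}$.

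\textbf{Step 2 (Schatten product estimate).} I will use the H\"older inequality for Schatten classes,
\[
\norm{AB}_{\calS_s}\leq\norm{A}_{\calS_a}\norm{B}_{\calS_b},
\qquad \frac1s=\frac1a+\frac1b,
\]
with $a=2\theta$, $b=2\beta$ and $s=2p$, where $1/p:=1/\theta+1/\beta$. This gives
\[
\norm{C}_{\calS_p}=\norm{\imath G}_{\calS_{2p}}^2\leq Z_\theta^{1/\theta}\norm{H}_{\calS_\beta}.
\]
The main obstacle is that $p$ may be smaller than $1$ (indeed, $q_{\theta,\beta}>0$ forces $p<1$), so this is a quasi-norm inequality. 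It also involves operators between different Hilbert spaces, and possibly infinitely many nonzero singular values when $d=+\infty$.

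I would justify it from Horn's multiplicative majorization, $\prod_{j\leq k}s_j(AB)\leq\prod_{j\leq k}s_j(A)s_j(B)$ for every $k$. Weak log-majorization implies weak majorization of $\phi(\log s_j)$ for every increasing convex $\phi$, and $\phi(t)=e^{st}$ is such a function. This gives $\sum_j s_j(AB)^s\leq\sum_j s_j(A)^s s_j(B)^s$, and the scalar H\"older inequality with exponents $a/s$ and $b/s$ then finishes. The case $\beta=\infty$ is simpler: there $s_j(\imath G)\leq s_j(\imath)\norm{G}$ directly.

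For the infinite-rank case I would first apply the argument to finite-rank truncations $\imath P_N$ and then pass to the limit. An alternative that avoids this is the cruder Horn bound $s_{2k-1}(\imath G)\leq s_k(\imath)s_k(G)$, but it costs a factor depending on $p$. So I would insist on the majorization route, which gives the constant $1$ in the statement.

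\textbf{Step 3 (tail bound).} Let $a_j:=\sigma_j^2$, which is nonincreasing with $\sum_j a_j^p=\norm{C}_{\calS_p}^p$. Monotonicity gives $r\,a_r^p\leq\sum_{j\leq r}a_j^p\leq\norm{C}_{\calS_p}^p$. Using $a_j\leq a_r$ for $j>r$,
\[
\sum_{j>r}a_j\leq a_r^{1-p}\sum_{j>r}a_j^p\leq\left(\frac{\norm{C}_{\calS_p}^p}{r}\right)^{\frac{1-p}{p}}\norm{C}_{\calS_p}^p=\norm{C}_{\calS_p}\,r^{-(1/p-1)}.
\]
Since $1/p-1=1/\theta+1/\beta-1=q_{\theta,\beta}$, combining this with \eqref{eq:population-active-subspace-tail} and Step 2 yields \eqref{eq:smoothness-active-subspace-tail}.
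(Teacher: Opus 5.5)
Your proposal is correct and takes the same route as the paper. Both use the factorization $C=(\imath G)^*(\imath G)$, the identities $\norm{\imath}_{\calS_{2\theta}}^2=Z_\theta^{1/\theta}$ and $\norm{G}_{\calS_{2\beta}}^2=\norm{H}_{\calS_\beta}$, the Schatten--H\"older inequality with $1/p=1/\theta+1/\beta$, and Stechkin's inequality; the only difference is that you prove the quasi-norm H\"older inequality (via Horn's inequality and Weyl's majorant theorem) and the Stechkin bound yourself, where the paper cites them.
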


\begin{proof}
    Since the singular values of $\imath$ are $\gamma_n^{1/2}$,
    we have
    \begin{equation}
        \label{eq:smoothness-embedding-schatten}
        \norm{\imath}_{\calS_{2\theta}(\calH,L^2_\mu(\Gamma))}^2
        =
        \left(
        \sum_{n\geq 1}
        \gamma_n^\theta
        \right)^{1/\theta}
        =
        Z_\theta^{1/\theta}.
    \end{equation}
    The factorization $H=G^*G$ shows that the singular values of $H$ are the
    squared singular values of $G$, and hence
    \begin{equation}
        \label{eq:smoothness-G-H-schatten}
        \norm{G}_{\calS_{2\beta}(\ell^2(d),\calH)}^2
        =
        \norm{H}_{\calS_\beta(\ell^2(d))}.
    \end{equation}
    Define $p\in(0,1)$ by
    \begin{equation}
        \label{eq:smoothness-holder-p}
        \frac{1}{p}
        :=
        \frac{1}{\theta}
        +
        \frac{1}{\beta}
    \end{equation}
    and note that the condition $q_{\theta,\beta}>0$ is equivalent to $p<1$.
    Then, by the Schatten--H\"older inequality
    \cite[Theorem~6.3 and Notes and Remarks, p.~151]{Diestel_Jarchow_Tonge_1995},
    \begin{equation}
        \label{eq:smoothness-holder-bound}
        \norm{\imath G}_{\calS_{2p}(\ell^2(d),L^2_\mu(\Gamma))}
        \leq
        \norm{\imath}_{\calS_{2\theta}(\calH,L^2_\mu(\Gamma))}
        \norm{G}_{\calS_{2\beta}(\ell^2(d),\calH)}.
    \end{equation}
    Combining \eqref{eq:smoothness-embedding-schatten},
    \eqref{eq:smoothness-G-H-schatten}, and
    \eqref{eq:smoothness-holder-bound}, we obtain
    \begin{equation}
        \label{eq:smoothness-G0-schatten-bound}
        \norm{\imath G}_{\calS_{2p}(\ell^2(d),L^2_\mu(\Gamma))}^2
        \leq
        Z_\theta^{1/\theta}
        \norm{H}_{\calS_\beta(\ell^2(d))}.
    \end{equation}

    Finally, $C=(\imath G)^*(\imath G)$, so the singular values of
    $\imath G$ are the numbers $\{\sigma_j\}_{j\geq 1}$ in
    \eqref{eq:cov-spectral-decomposition}.
    Since $p<1$, Stechkin's inequality applied to the nonincreasing sequence
    $\{\sigma_j^2\}_{j\geq 1}$ gives
    \begin{equation}
        \label{eq:smoothness-stechkin-tail}
        \err(\Pi_r)^2
        =
        \sum_{j>r}
        \sigma_j^2
        \leq
        \left(
        \sum_{j\geq 1}
        \sigma_j^{2p}
        \right)^{1/p}
        r^{1-1/p}
        =
        \norm{\imath G}_{\calS_{2p}(\ell^2(d),L^2_\mu(\Gamma))}^2
        r^{1-1/p}.
    \end{equation}
    Since
    \begin{equation}
        1-\frac{1}{p}
        =
        -q_{\theta,\beta},
    \end{equation}
    the estimate \eqref{eq:smoothness-active-subspace-tail} follows from
    \eqref{eq:smoothness-G0-schatten-bound} and
    \eqref{eq:smoothness-stechkin-tail}.
\end{proof}

\subsection{Tolerance-based sample complexity}

We finally combine the {regularized} coherence estimate with the active subspace tail energy decay estimate.
When $\err(\Pi_r)>0$, the canonical {regularization} scale
\eqref{eq:canonical-ridge-scale} is useful for interpreting the
quasi-optimality theorem, but it is not directly an a priori choice because
$\err(\Pi_r)^2$ is unknown.

\begin{theorem}[Tolerance-level sample complexity from smoothness]
    \label{thm:smoothness-tolerance-complexity}
    Suppose that Assumption~\ref{ass:smoothness-covariance-summability} holds {for
            exponents $\theta$ and $\beta$ satisfying} $q_{\theta,\beta}>0$.
    Define, for $\zeta\in(0,\infty]$,
    \begin{equation}
        \label{eq:smoothness-tail-rate-constant}
        A_{\theta,\zeta}
        :=
        Z_\theta
        \norm{H}_{\calS_\zeta(\ell^2(d))}^\theta.
    \end{equation}
    For a prescribed tolerance $\tol>0$, choose
    \begin{equation}
        \label{eq:smoothness-rank-from-tolerance}
        r_\tol
        :=
        \left\lceil
        \left(
        \frac{A_{\theta,\beta}^{1/\theta}}{\tol^2}
        \right)^{1/q_{\theta,\beta}}
        \right\rceil.
    \end{equation}
    Set
    \begin{equation}
        \label{eq:smoothness-tolerance-ridge-scale}
        \lambda_\tol
        :=
        \frac{\tol^2}{r_\tol}.
    \end{equation}
    Set
    \begin{equation}
        \label{eq:smoothness-tolerance-ridge-diagonal-scale}
        \alpha_\tol
        :=
        \frac{\lambda_\tol}{\norm{H}_{\mathcal L(\ell^2(d))}}.
    \end{equation}
    {Let $\rho$ be either the exact regularized kernel diagonal measure
    $\rho_{\alpha_\tol}$ or the $\theta$-kernel diagonal measure $\rho^\theta$.}
    There exist universal constants $c_0,c_1>0$ such that, for every
    $\varepsilon,\eta\in(0,1)$, if
    \begin{equation}
        {\label{eq:smoothness-tolerance-sample-condition}
            M
            \geq
            \frac{
                \max\left\{
                A_{\theta,\infty}
                \tol^{-2\theta}
                \left(
                1
                +
                A_{\theta,\beta}^{1/q_{\theta,\beta}}
                \tol^{-2\theta/q_{\theta,\beta}}
                \right),
                1
                \right\}
            }{
                c_0\varepsilon^2
            }
            \log\left(
            \frac{
                c_1\left(
                3
                +
                2
                \left(
                \frac{A_{\theta,\beta}^{1/\theta}}{\tol^2}
                \right)^{1/q_{\theta,\beta}}
                \right)
            }{
                \eta
            }
            \right)}.
    \end{equation}
    then the empirical active subspace projector $\hat\Pi_{r_\tol}$ computed from
    \eqref{eq:emp-cov-is} satisfies, with probability at least $1-\eta$,
    \begin{equation}
        \label{eq:smoothness-tolerance-error}
        \err(\hat\Pi_{r_\tol})^2
        \leq
        \frac{1+\varepsilon}{1-\varepsilon}
        \tol^2{.}
    \end{equation}
\end{theorem}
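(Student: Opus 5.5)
The plan is to apply the general bound \eqref{eq:quasi-opt-general-lambda} of Theorem~\ref{thm:slas-ridge-quasi-opt} with $r=r_\tol$ and $\lambda=\lambda_\tol$, after checking that the sampling condition \eqref{eq:smoothness-tolerance-sample-condition} implies \eqref{eq:samp-ineq-eta}.

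\textbf{Step 1: the rank is large enough.} Proposition~\ref{prop:smoothness-active-subspace-tail} gives $\err(\Pi_r)^2\le Z_\theta^{1/\theta}\norm{H}_{\calS_\beta}r^{-q_{\theta,\beta}}$. Since $Z_\theta^{1/\theta}\norm{H}_{\calS_\beta}=A_{\theta,\beta}^{1/\theta}$, the choice \eqref{eq:smoothness-rank-from-tolerance} yields $\err(\Pi_{r_\tol})^2\le\tol^2$. We also record the upper bound
\[
r_\tol\le 1+\bigl(A_{\theta,\beta}^{1/\theta}\tol^{-2}\bigr)^{1/q_{\theta,\beta}}.
\]

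\textbf{Step 2: spectral counting at scale $\lambda_\tol$.} As at the end of Section~\ref{sec:slanal}, we show $d_{\mathrm{eff}}(\lambda_\tol)\le 2r_\tol$ and $s_{\lambda_\tol}\le 2r_\tol$. The first follows by splitting the sum at index $r_\tol$, since the tail contributes at most $\tol^2/\lambda_\tol=r_\tol$. The second follows because at most $r_\tol$ indices with $j>r_\tol$ can have $\sigma_j^2>\lambda_\tol$, as their total is bounded by $\tol^2$. Hence $(r_\tol+s_{\lambda_\tol})\lambda_\tol\le 3\tol^2$. The log argument is then controlled by $1+d_{\mathrm{eff}}\le 1+2r_\tol\le 3+2(A_{\theta,\beta}^{1/\theta}\tol^{-2})^{1/q_{\theta,\beta}}$, which matches the logarithm in \eqref{eq:smoothness-tolerance-sample-condition}.

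\textbf{Step 3: the coherence.} For both admissible choices of $\rho$, Proposition~\ref{prop:smoothness-ridge-diagonal-sampling} and Corollary~\ref{prop:smoothness-diagonal-sampling} give
\[
K_{\rho,\lambda_\tol}(g)\le Z_\theta\norm{H}_{\mathcal L}^\theta\lambda_\tol^{-\theta}=A_{\theta,\infty}\tol^{-2\theta}r_\tol^\theta.
\]
For $\rho_{\alpha_\tol}$ this uses $c_2=1$ together with $d_{\mathrm{eff}}(\alpha_\tol,\gamma)\le\alpha_\tol^{-\theta}Z_\theta$. Since $\theta\le1$, subadditivity gives $r_\tol^\theta\le 1+A_{\theta,\beta}^{1/q_{\theta,\beta}}\tol^{-2\theta/q_{\theta,\beta}}$. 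Therefore $K^+_{\rho,\lambda_\tol}(g)$ is bounded by the max in the numerator of \eqref{eq:smoothness-tolerance-sample-condition}, and the condition \eqref{eq:samp-ineq-eta} holds.

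\textbf{Step 4: conclude.} Theorem~\ref{thm:slas-ridge-quasi-opt} gives, with probability at least $1-\eta$,
\[
\err(\hat\Pi_{r_\tol})^2\le\frac{4+\varepsilon}{4-\varepsilon}\tol^2+\frac{3\varepsilon}{4-\varepsilon}\tol^2=\frac{4(1+\varepsilon)}{4-\varepsilon}\tol^2\le\frac{1+\varepsilon}{1-\varepsilon}\tol^2,
\]
where the last inequality uses $4(1-\varepsilon)\le 4-\varepsilon$.

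\textbf{Main obstacle.} The delicate points are bookkeeping rather than new ideas. One is matching the exact constants of the condition, especially the subadditivity step $r_\tol^\theta\le 1+(\cdot)^\theta$, where the exponent reads $A^{1/q}$ because $(A^{1/\theta})^{\theta/q}=A^{1/q}$. The other is the degenerate case $\err(\Pi_{r_\tol})=0$. That case is still covered, because the bound \eqref{eq:quasi-opt-general-lambda} holds for any $\lambda>0$, and the Step~2 counting only used $\err(\Pi_{r_\tol})^2\le\tol^2$.
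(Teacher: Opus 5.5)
Your proposal is correct and follows the paper's proof essentially step for step. You certify $\err(\Pi_{r_\tol})^2\le\tol^2$ via Proposition~\ref{prop:smoothness-active-subspace-tail}, use the spectral counting $d_{\mathrm{eff}}(\lambda_\tol),\,s_{\lambda_\tol}\le 2r_\tol$, bound the coherence by $A_{\theta,\infty}(r_\tol/\tol^2)^\theta$ with the ceiling and subadditivity estimates, and conclude from \eqref{eq:quasi-opt-general-lambda}; your observation that the degenerate case $\err(\Pi_{r_\tol})=0$ needs no separate treatment, because that bound holds for every $\lambda>0$, is a correct point the paper leaves implicit.
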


\begin{proof}
    By Proposition~\ref{prop:smoothness-active-subspace-tail} and
    \eqref{eq:smoothness-tail-rate-constant},
    \begin{equation}
        \label{eq:smoothness-tail-rate}
        \err(\Pi_r)^2
        \leq
        A_{\theta,\beta}^{1/\theta}r^{-q_{\theta,\beta}},
        \qquad
        r\geq 1.
    \end{equation}
    Applying \eqref{eq:smoothness-tail-rate} with
    $r=r_\tol$ and using \eqref{eq:smoothness-rank-from-tolerance} gives
    \begin{equation}
        \label{eq:smoothness-tolerance-rank-condition}
        \err(\Pi_{r_\tol})^2
        \leq
        A_{\theta,\beta}^{1/\theta}r_\tol^{-q_{\theta,\beta}}
        \leq
        \tol^2.
    \end{equation}
    For $\lambda_\tol=\tol^2/r_\tol$, this implies the covariance
    effective-dimension bound
    \begin{equation}
        \label{eq:smoothness-tolerance-effective-dimension}
        d_{\mathrm{eff}}(\lambda_\tol)
        \leq
        r_\tol
        +
        \frac{1}{\lambda_\tol}
        \sum_{j>r_\tol}
        \sigma_j^2
        =
        r_\tol
        +
        \frac{\err(\Pi_{r_\tol})^2}{\tol^2}r_\tol
        \leq
        2r_\tol.
    \end{equation}
    Moreover, the head rank \eqref{eq:head-rank} satisfies
    \begin{equation}
        \label{eq:smoothness-tolerance-head-rank}
        s_{\lambda_\tol}
        \leq
        2r_\tol.
    \end{equation}
    Indeed, at most $r_\tol$ head eigenvalues can occur among the first
    $r_\tol$ indices, and each head eigenvalue with index larger than
    $r_\tol$ contributes more than $\lambda_\tol$ to the active subspace
    tail energy $\err(\Pi_{r_\tol})^2$.
        {For $\rho=\rho_{\alpha_\tol}$,
            Proposition~\ref{prop:smoothness-ridge-diagonal-sampling} with $c_2=1$
            and \eqref{eq:smoothness-ridge-theta-dimension-comparison} give the bound below.
            For $\rho=\rho^\theta$, the same bound follows from
            Corollary~\ref{prop:smoothness-diagonal-sampling}.}
    \begin{equation}
        {\label{eq:smoothness-tolerance-special-K-bound}
            K_{\rho,\lambda_\tol}(g)
            \leq
            A_{\theta,\infty}}
        \left(
        \frac{r_\tol}{\tol^2}
        \right)^\theta.
    \end{equation}
    The ceiling bound
    \begin{equation}
        r_\tol
        \leq
        1
        +
        \left(
        \frac{A_{\theta,\beta}^{1/\theta}}{\tol^2}
        \right)^{1/q_{\theta,\beta}}
    \end{equation}
    and the subadditivity of $t\mapsto t^\theta$ on $[0,\infty)$ give
    \begin{equation}
        \label{eq:smoothness-tolerance-rank-upper-bound}
        \left(
        \frac{r_\tol}{\tol^2}
        \right)^\theta
        \leq
        \tol^{-2\theta}
        +
        A_{\theta,\beta}^{1/q_{\theta,\beta}}
        \tol^{-2\theta\left(1+1/q_{\theta,\beta}\right)}.
    \end{equation}
    Similarly,
    \begin{equation}
        \label{eq:smoothness-tolerance-log-bound}
        1+d_{\mathrm{eff}}(\lambda_\tol)
        \leq
        1+2r_\tol
        \leq
        3
        +
        2
        \left(
        \frac{A_{\theta,\beta}^{1/\theta}}{\tol^2}
        \right)^{1/q_{\theta,\beta}}.
    \end{equation}
    Therefore \eqref{eq:smoothness-tolerance-sample-condition},
    after a harmless change of the universal
    constants $c_0,c_1$, implies the sampling condition
    \eqref{eq:samp-ineq-eta} in Theorem~\ref{thm:slas-ridge-quasi-opt} with
    $r=r_\tol$ and $\lambda=\lambda_\tol$.
    Applying \eqref{eq:quasi-opt-general-lambda} gives
    \begin{equation}
        \err(\hat\Pi_{r_\tol})^2
        \leq
        \frac{
            \left(1+\frac{\varepsilon}{4}\right)\err(\Pi_{r_\tol})^2
            +
            \frac{\varepsilon}{4}(r_\tol+s_{\lambda_\tol})\lambda_\tol
        }{
            1-\frac{\varepsilon}{4}
        }.
    \end{equation}
    Since $\err(\Pi_{r_\tol})^2\leq \tol^2$,
    $r_\tol\lambda_\tol=\tol^2$, and
    $s_{\lambda_\tol}\lambda_\tol\leq 2r_\tol\lambda_\tol=2\tol^2$,
    \begin{equation}
        \err(\hat\Pi_{r_\tol})^2
        \leq
        \frac{1+\varepsilon}{1-\frac{\varepsilon}{4}}\tol^2
        \leq
        \frac{1+\varepsilon}{1-\varepsilon}\tol^2.
    \end{equation}
    This proves \eqref{eq:smoothness-tolerance-error}.
\end{proof}

{
For fixed model-dependent quantities and fixed $\varepsilon,\eta$, the leading
tolerance dependence in \eqref{eq:smoothness-tolerance-sample-condition} as
$\tol\to0$ is
\begin{equation*}
    M
    \gtrsim
    \tol^{-2\theta(1+1/q_{\theta,\beta})}
    \log\left(
    \tol^{-1}
    \right).
\end{equation*}
The factor $\tol^{-2\theta}$ comes from regularized coherence, whereas the
additional power $\tol^{-2\theta/q_{\theta,\beta}}$ reflects the growth
$r_\tol\asymp\tol^{-2/q_{\theta,\beta}}$ of the sufficient rank.
The logarithmic factor also depends on this rank growth.}

The RKHS assumptions above are abstract at this stage.
The next section verifies them for concrete model classes, where the
$\theta$-kernel diagonal and the {RKHS covariance} can be bounded from
parametric regularity estimates.

\section{Parametric elliptic PDEs}
\label{sec:parametric-pdes-smoothness}

Partial differential equations (PDEs) are commonly used to model complex systems in a variety of physical contexts.
When solving a given PDE, one typically fixes certain parameters: the shape of the physical domain, the diffusion or velocity field, the source term, the flux or reaction law, etc.
We use the terminology parametric PDEs when some of these parameters are allowed to vary over a certain range of interest.
When the latter is endowed with a probability measure, we use the term random PDEs.
These mathematical objects have been extensively studied in recent years; see, e.g., \cite{cohen2015approximation,hesthaven2016certified,adcock2022sparse,bachmayr2017sparsei,bachmayr2017sparseii} and references therein.

When treating parametric PDEs, one is interested in finding the solution for all parameters in the range of interest.
Such problems can be described by considering the formulation
\begin{equation}
    \pde(u,a) = 0,
\end{equation}
where $a$ denotes the parameters, $u$ is the unknown of the problem, and $\pde: \calV \times \calX \to \calW$ is a linear or nonlinear partial differential operator, with $(\calV, \calX, \calW)$ a triplet of Banach spaces.
We allow the parameter $a$ to range over a subset of $\calX$ over which the solution map $a \mapsto u(a)$ is well defined.

In this paper, we focus on a simple guiding example given by the linear elliptic equation obtained by considering
\begin{equation}
    \pde(u,a) = f_D + \mathrm{div}(a \nabla u),
\end{equation}
set on a bounded Lipschitz domain $D \subset \R^s$.
Here, the real-valued function $f_D$ is a fixed forcing term, the parameter is given by the real-valued diffusion coefficients $a \in L^\infty(D)$, and a possible choice for the triplet of spaces is $(\calV, \calX, \calW) = (H^1_0(D), L^\infty(D), H^{-1}(D))$.

In deterministic modelling \cite{cohen2015approximation}, the parameters are deterministic design or control variables, which may be tuned by the user so that the solution $u$, or a scalar quantity of interest, has prescribed properties.
Two common models for the parameter $a$ are the lognormal and affine uniform models.
Given a basis $\{ \psi_j \}_{j\geq 1}$ of $\calX$, the lognormal model takes
\begin{equation}
    a(x,\yb)
    =
    \exp\left(\sum_{m\geq 1} y_m\psi_m(x)\right),
    \qquad
    Y\sim\mu,
\end{equation}
where $\mu$ is the standard Gaussian product measure on $\Gamma=\R^d$, while the affine uniform model is given by
\begin{equation}
    a(x,\yb)
    =
    \bar a(x)+\sum_{m\geq 1} y_m\psi_m(x),
    \qquad
    Y\sim\mu,
\end{equation}
where now $\mu$ is the product uniform measure {supported} on $[-1,1]^d{\subset\Gamma}$.
Denoting the corresponding solution map by $\yb \mapsto u(\yb)$, the considered elliptic PDE reads in weak form as
\begin{equation}
    \label{eq:parametric-elliptic-weak-form}
    \int_D
    a(x,\yb)\nabla u(x,\yb)\cdot\nabla v(x)
    \,\dif x
    =
    \langle f_D,v\rangle_{\calV',\calV},
    \qquad
    v\in \calV.
\end{equation}
In this paper, we restrict attention to a bounded linear quantity-of-interest functional $Q\in\calV'$ and define
\begin{equation}
    \label{eq:pde-quantity-of-interest}
    f(\yb) := Q(u(\yb)).
\end{equation}
The number of variables is countably infinite, that is $d = +\infty$, or very large if the above expansions have been truncated with high accuracy.
These models are motivated by considering a Karhunen-Lo\`eve expansion of the diffusion coefficient (or of its logarithm).

The naturally high dimensionality of the parameter spaces makes random PDEs with function-valued parameters fertile ground for the application of the AS method.
Furthermore, the solution map of parametric PDEs is often very smooth, which is a key ingredient for the success of the AS method as presented in Section~\ref{sec:sample-complexity-smoothness}.
Indeed, the regularity of the solution map is a well-studied topic in the literature on parametric PDEs, and it is known that under suitable assumptions on the basis $\{ \psi_j \}_{j\geq 1}$ the solution map is infinitely differentiable with respect to the parameters.
The goal of this section is hence to introduce RKHSs which capture the regularity of the solution map of this class of parametric PDEs, and to verify the gradient regularity assumptions needed for the results from Section~\ref{sec:sample-complexity-smoothness} to apply.

We use the following multi-index notation throughout the section.
Let $\mathcal F$ be the set of finitely supported multi-indices
$\nu=(\nu_m)_{m\geq 1}$ with entries in $\mathbb N_0$.
When $d<\infty$, this is simply the usual set of multi-indices in $\mathbb N_0^d$.
All sums over $m\geq 1$ are understood with this convention; in the finite-dimensional case
they terminate at $m=d$.
For $\alpha,\nu\in\mathcal F$, we write $\alpha\leq\nu$ componentwise and define
\begin{equation}
    \binom{\nu}{\alpha}
    :=
    \prod_{m\geq 1}\binom{\nu_m}{\alpha_m}.
\end{equation}
For any strictly positive sequence $\boldsymbol c=(c_m)_{m\geq1}$ and $\gamma\in\mathcal F$, we also write
\begin{equation*}
    \boldsymbol c^{\gamma}
    :=
    \prod_{m\geq1}c_m^{\gamma_m}.
\end{equation*}

\subsection{The lognormal model}
\label{subsec:lognormal-model}

We first consider the Gaussian lognormal setting from
\cite{bachmayr2017sparseii}.
Let $\mu$ be the standard Gaussian product measure on $\Gamma=\R^d$ and let
$\{H_n\}_{n\geq 0}$ denote the $L^2_{\mu_1}(\R)$-orthonormal Hermite
polynomials, where $\mu_1$ is the standard Gaussian probability measure on
$\R$.
For $\nu\in\mathcal F$, set
\begin{equation}
    H_\nu(\yb)
    :=
    \prod_{m\geq 1}H_{\nu_m}(y_m).
\end{equation}
Then $\{H_\nu\}_{\nu\in\mathcal F}$ is the tensorized normalized Hermite basis
of $L^2_\mu(\R^d)$.
The diffusion coefficient is
\begin{equation}
    \label{eq:lognormal-coefficient}
    a(x,\yb)
    =
    \exp(b(x,\yb)),
    \qquad
    b(x,\yb)
    :=
    \sum_{m\geq 1}y_m\psi_m(x).
\end{equation}
For every $\yb$ such that $b(\cdot,\yb)\in L^\infty(D)$, the coefficient
$a(\cdot,\yb)$ is strictly positive and bounded above and below on $D$.
Thus, the weak problem \eqref{eq:parametric-elliptic-weak-form} is well posed
for such values of $\yb$, and the solution satisfies
\begin{equation}
    \label{eq:lognormal-apriori}
    \norm{u(\yb)}_{\calV}
    \leq
    \norm{f_D}_{\calV'}
    \exp\left(
    \norm{b(\cdot,\yb)}_{L^\infty(D)}
    \right).
\end{equation}

Following Assumption A from \cite{bachmayr2017sparseii}, we assume that there
exists a strictly positive sequence
$\boldsymbol\varrho=(\varrho_m)_{m\geq 1}$ such that
$\sum_{m\geq 1}\varrho_m\abs{\psi_m}$ converges in $L^\infty(D)$ and
\begin{equation}
    \label{eq:lognormal-assumption-a}
    \sum_{m\geq 1}\exp(-\varrho_m^2)<\infty.
\end{equation}
In the countably infinite case, this ensures that the coefficient and the
solution map are well defined for $\mu$-almost every $\yb$ and have finite
moments of all orders.

For $\nu\in\mathcal F$ and an integer $\mathfrak r\geq 1$, define the Hermite
weight
\begin{equation}
    \label{eq:hermite-weight-lognormal}
    b_{\nu,\mathfrak r}(\boldsymbol\varrho)
    :=
    \sum_{\substack{\alpha\leq\nu\\ \norm{\alpha}_{\ell^\infty}\leq \mathfrak r}}
    \binom{\nu}{\alpha}
    \boldsymbol\varrho^{2\alpha}
    =
    \prod_{m\geq 1}
    \left(
    \sum_{\ell=0}^{\mathfrak r}
    \binom{\nu_m}{\ell}
    \varrho_m^{2\ell}
    \right).
\end{equation}
{Here and below, we use the convention
$\binom{n}{\ell}=0$ whenever $\ell>n$.}
Define further the associated weighted Hermite space by
\begin{equation}
    \label{eq:lognormal-hermite-rkhs}
    \mathcal A^{(\mathfrak r)}_{\boldsymbol\varrho}
    :=
    \left\{
    p=\sum_{\nu\in\mathcal F}p_\nu H_\nu
    \in L^2_\mu(\R^d)
    :
    \norm{p}_{\mathcal A^{(\mathfrak r)}_{\boldsymbol\varrho}}^2
    :=
    \sum_{\nu\in\mathcal F}
    b_{\nu,\mathfrak r}(\boldsymbol\varrho)
    \abs{p_\nu}^2
    <
    \infty
    \right\}.
\end{equation}
The embedding of this weighted space into $L^2_\mu(\R^d)$ is diagonal in the
Hermite basis.
In the notation of Section~\ref{sec:sample-complexity-smoothness}, its
eigenfunctions are $\phi_\nu=H_\nu$ and its eigenvalues are
$\gamma_\nu=b_{\nu,\mathfrak r}(\boldsymbol\varrho)^{-1}$.
For $\theta\in(0,1]$, the $\theta$-kernel diagonal and normalization constant are
\begin{equation}
    \label{eq:lognormal-hermite-diagonal}
    \kappa^\theta(\yb)
    :=
    \sum_{\nu\in\mathcal F}
    b_{\nu,\mathfrak r}(\boldsymbol\varrho)^{-\theta}
    H_\nu(\yb)^2
\end{equation}
and
\begin{equation}
    \label{eq:lognormal-hermite-normalization}
    Z_\theta
    =
    \int_{\R^d}\kappa^\theta(\yb)\,\dif\mu(\yb)
    =
    \sum_{\nu\in\mathcal F}
    b_{\nu,\mathfrak r}(\boldsymbol\varrho)^{-\theta}.
\end{equation}

\begin{lemma}[Admissible exponents for the weighted Hermite space]
    \label{lem:lognormal-hermite-admissible-scales}
    Let $\mathfrak r\geq 2$ {and suppose that
            $(\varrho_m^{-1})_{m\geq 1}\in\ell^2$.
            Then $\mathcal A^{(\mathfrak r)}_{\boldsymbol\varrho}$ is an RKHS.
            Moreover, every} $\theta\in(0,1]$ {satisfying}
    \begin{equation}
        \label{eq:lognormal-theta-admissibility}
        \theta>\frac{1}{\mathfrak r},
        \qquad
        (\varrho_m^{-1})_{m\geq 1}\in\ell^{2\theta}
    \end{equation}
    {is an admissible exponent for $\mathcal A^{(\mathfrak r)}_{\boldsymbol\varrho}$.}
\end{lemma}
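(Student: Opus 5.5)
The key computation is that $Z_\theta=\sum_{\nu}b_{\nu,\mathfrak r}(\boldsymbol\varrho)^{-\theta}$ factorizes over coordinates, because $b_{\nu,\mathfrak r}$ is a product of one-dimensional weights. I would therefore show $Z_\theta<\infty$ via an infinite product. Then I would deal with the pointwise conditions on $\kappa^\theta$ and with the RKHS property, which is the case $\theta=1$.

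\textbf{Step 1: factorization.} Write $\beta_m(n):=\sum_{\ell=0}^{\mathfrak r}\binom{n}{\ell}\varrho_m^{2\ell}$. Since the sum runs over finitely supported $\nu$ and $\beta_m(0)=1$, Tonelli gives
\begin{equation*}
Z_\theta=\prod_{m\geq1}\Bigl(1+\sum_{n\geq1}\beta_m(n)^{-\theta}\Bigr).
\end{equation*}
This product converges if and only if $\sum_m S_m<\infty$, where $S_m:=\sum_{n\geq1}\beta_m(n)^{-\theta}$.

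\textbf{Step 2: one-dimensional sums.} I would bound $S_m$ using the lower bound $\beta_m(n)\geq\binom{n}{\min(n,\mathfrak r)}\varrho_m^{2\min(n,\mathfrak r)}$. Split the sum at $n=\mathfrak r$.
\begin{itemize}
\item For $1\leq n\leq\mathfrak r$, use $\beta_m(n)\geq n\varrho_m^2$. The sum is bounded by $C\varrho_m^{-2\theta}$.
\item For $n>\mathfrak r$, use $\beta_m(n)\geq\binom{n}{\mathfrak r}\varrho_m^{2\mathfrak r}\geq c_{\mathfrak r}n^{\mathfrak r}\varrho_m^{2\mathfrak r}$. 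Then $\sum_{n>\mathfrak r}n^{-\theta\mathfrak r}<\infty$ because $\theta\mathfrak r>1$. This part contributes $C\varrho_m^{-2\theta\mathfrak r}$.
\end{itemize}

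\textbf{Step 3: summing over $m$.} The hypothesis $(\varrho_m^{-1})\in\ell^2$ forces $\varrho_m\to\infty$, so $\varrho_m\geq1$ for all but finitely many $m$. For those $m$, $\varrho_m^{-2\theta\mathfrak r}\leq\varrho_m^{-2\theta}$. Hence $\sum_m S_m\lesssim\sum_m\varrho_m^{-2\theta}+(\text{finitely many terms})<\infty$, using $(\varrho_m^{-1})\in\ell^{2\theta}$. This gives $Z_\theta<\infty$.

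\textbf{Step 4: the RKHS property ($\theta=1$).} With $\mathfrak r\geq2$ we have $1>1/\mathfrak r$, and the case $\theta=1$ needs exactly $(\varrho_m^{-1})\in\ell^2$. Steps 1--3 then give $Z_1<\infty$. The kernel $\kappa(\yb,\zb)=\sum_\nu b_\nu^{-1}H_\nu(\yb)H_\nu(\zb)$ has diagonal integrating to $Z_1<\infty$, so its diagonal is finite $\mu$-a.e. To make point evaluation bounded, I would restrict to the full-measure set $\Gamma_0$ where $\kappa^1(\yb)<\infty$. On $\Gamma_0$, Cauchy--Schwarz gives
\begin{equation*}
|p(\yb)|^2\leq\kappa^1(\yb)\norm{p}^2_{\mathcal A^{(\mathfrak r)}_{\boldsymbol\varrho}},
\end{equation*}
and the series $\sum_\nu p_\nu H_\nu(\yb)$ converges absolutely there. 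This defines pointwise representatives and makes the space an RKHS on $\Gamma_0$, consistent with the "up to restricting $\Gamma$ to a set of full measure" remark in the paper. The embedding into $L^2_\mu$ is injective, being diagonal with positive weights.

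\textbf{Step 5: the pointwise conditions on $\kappa^\theta$.} Finiteness a.e. follows from $\int\kappa^\theta\,\dif\mu=Z_\theta<\infty$. Strict positivity holds everywhere, since the $\nu=0$ term equals $b_0^{-\theta}H_0^2=1$.

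The main obstacle is the tail estimate in Step 2. One must check that $\binom{n}{\mathfrak r}\gtrsim n^{\mathfrak r}$ uniformly for $n>\mathfrak r$. One must also handle the finitely many $m$ with small $\varrho_m$, where $\varrho_m^{-2\theta\mathfrak r}$ may exceed $\varrho_m^{-2\theta}$; since there are only finitely many, this only affects the constant.
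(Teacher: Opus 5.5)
Your proof is correct, but it takes a different route from the paper. The paper never computes $Z_\theta$ by hand. It invokes \cite[Lemma~5.1]{bachmayr2017sparseii} twice: first with $(q,p)=(2,1)$ to get $Z_1<\infty$, and hence the RKHS property via Cauchy--Schwarz; then with $(q,p)=(2\theta,2\theta/(1+\theta))$, noting that $\theta>1/\mathfrak r$ is equivalent to the lemma's hypothesis $p>2/(\mathfrak r+1)$. Positivity and a.e.\ finiteness of $\kappa^\theta$ are then handled exactly as in your Step~5.

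You instead prove the summability directly, via the factorization $Z_\theta=\prod_m(1+S_m)$ and a univariate split at degree $\mathfrak r$. This is the same device the paper itself uses for the Legendre analogue in Lemma~\ref{lem:uniform-legendre-admissible-scales}.

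\emph{Comparison.} The citation buys brevity and a link to the sparse-approximation literature. Your computation is self-contained and shows exactly where each hypothesis enters: $\theta\mathfrak r>1$ only controls the univariate high-degree tail $\sum_{n>\mathfrak r}n^{-\theta\mathfrak r}$, while $(\varrho_m^{-1})\in\ell^{2\theta}$ controls the product over coordinates through the low-degree terms. Read as a lower bound, the same factorization also shows that both conditions in \eqref{eq:lognormal-theta-admissibility} are necessary for $Z_\theta<\infty$, since $Z_\theta\geq 1+\sum_m S_m$, $S_m\geq(1+\varrho_m^2)^{-\theta}$, and $\beta_m(n)\lesssim_m n^{\mathfrak r}$. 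This is not visible from the citation-based proof. Your explicit restriction to the full-measure set where $\kappa^1<\infty$ is also more careful than the paper's one-line Cauchy--Schwarz remark.

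\emph{Cosmetic points.}
\begin{itemize}
\item The interchange in Step~1 is monotone convergence over finite coordinate sets, not Tonelli proper.
\item The first bullet of Step~2 uses the $\ell=1$ term rather than the $\ell=\min(n,\mathfrak r)$ term you announced; both are valid lower bounds.
\item The ``obstacle'' you flag is settled by $\binom{n}{\mathfrak r}=\prod_{i=0}^{\mathfrak r-1}\frac{n-i}{\mathfrak r-i}\geq(n/\mathfrak r)^{\mathfrak r}$ for $n\geq\mathfrak r$.
\end{itemize}
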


\begin{proof}
    {In the notation of
        \cite[Lemma 5.1]{bachmayr2017sparseii}, first take $q=2$ and $p=1$.
        Since $\mathfrak r\geq2$, we have $p>2/(\mathfrak r+1)$, and the cited
        lemma gives}
    \begin{equation*}
        {
            \sum_{\nu\in\mathcal F}
            b_{\nu,\mathfrak r}(\boldsymbol\varrho)^{-1}
            <
            \infty.}
    \end{equation*}
    Thus the kernel diagonal of
    $\mathcal A^{(\mathfrak r)}_{\boldsymbol\varrho}$ is integrable.
    Cauchy--Schwarz therefore shows that
    $\mathcal A^{(\mathfrak r)}_{\boldsymbol\varrho}$ is an RKHS.
        {Now let $\theta\in(0,1]$ satisfy
            \eqref{eq:lognormal-theta-admissibility}.
            Taking $q=2\theta$ and $p=2\theta/(1+\theta)$, the condition
            $\theta>1/\mathfrak r$ is equivalent to $p>2/(\mathfrak r+1)$.
            The cited lemma therefore gives}
    \begin{equation}\label{eq:lognormal-hermite-normalization-finite}
        Z_\theta
        =
        \sum_{\nu\in\mathcal F}
        b_{\nu,\mathfrak r}(\boldsymbol\varrho)^{-\theta}
        <
        \infty.
    \end{equation}
    Finally, \eqref{eq:lognormal-hermite-normalization} and
    \eqref{eq:lognormal-hermite-normalization-finite} show that
    $\kappa^\theta(\yb)<\infty$ for $\mu$-almost every $\yb$.
    Since its defining sum contains the strictly positive constant-mode term
    corresponding to $H_0=1$, we also have $\kappa^\theta(\yb)>0$ for every
    $\yb$.
    Hence $\theta$ is admissible.
\end{proof}

The next lemma controls both the loss of one Hermite order under differentiation
and the weighted $\ell^2$-sum of all coordinate derivatives.

\begin{lemma}[Hermite differentiation and weighted derivative summability]
    \label{lem:lognormal-one-order-loss}
    Let $\mathfrak r\geq 1$.
    If
    $p=\sum_{\nu\in\mathcal F}p_\nu H_\nu
        \in\mathcal A^{(\mathfrak r+1)}_{\boldsymbol\varrho}$,
    then, for every $m\geq 1$,
    $\partial_m p\in\mathcal A^{(\mathfrak r)}_{\boldsymbol\varrho}$ and
    \begin{equation}
        \label{eq:lognormal-derivative-component-bound}
        \norm{\partial_m p}_{\mathcal A^{(\mathfrak r)}_{\boldsymbol\varrho}}^2
        \leq
        (\mathfrak r+1)\varrho_m^{-2}
        \norm{p}_{\mathcal A^{(\mathfrak r+1)}_{\boldsymbol\varrho}}^2.
    \end{equation}
    If, in addition,
    $\underline{\varrho}:=\inf_m\varrho_m>0$, $\tau>1$, and
    $p\in\mathcal A^{(\mathfrak r+1)}_{\tau\boldsymbol\varrho}$, then
    \begin{equation}
        \label{eq:lognormal-derivative-weighted-sum-bound}
        \sum_{m\geq1}
        \varrho_m^2
        \norm{\partial_m p}_{\mathcal A^{(\mathfrak r)}_{\boldsymbol\varrho}}^2
        \leq
        C_{\log}
        \norm{p}_{\mathcal A^{(\mathfrak r+1)}_{\tau\boldsymbol\varrho}}^2,
    \end{equation}
    where
    \begin{equation}
        \label{eq:lognormal-derivative-weighted-sum-constant}
        {C_{\log}
            :=
            (\mathfrak r+1)
            \sup_{s\geq1}
            s \chi^{s-1},
            \qquad\text{and}\qquad
            \chi
            :=
            \frac{
                1+\underline{\varrho}^{\,2}
            }{
                1+\tau^2\underline{\varrho}^{\,2}
            }.}
    \end{equation}
    In particular, $0<\chi<1$, hence $C_{\log}<\infty$.
\end{lemma}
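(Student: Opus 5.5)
The plan is to work entirely in Hermite coefficients and use the identity $H_n'=\sqrt{n}\,H_{n-1}$ for the $L^2_{\mu_1}$-normalized Hermite polynomials. For $p=\sum_\nu p_\nu H_\nu$ this suggests
\begin{equation*}
\partial_m p=\sum_{\nu:\nu_m\geq1}\sqrt{\nu_m}\,p_\nu H_{\nu-e_m},
\end{equation*}
where $e_m$ denotes the $m$th unit multi-index.

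The first step is to justify this formula for the weak derivative in \eqref{eq:gaussian-weak-derivative}. I would test the truncations $p^{(N)}$ (finitely many modes, which are polynomials and satisfy Gaussian integration by parts) against $\Phi\in\mathcal C^\infty_{\mathrm{cyl}}$. Then I pass to the limit: $p^{(N)}\to p$ in $L^2_\mu$, and the candidate series converges in $L^2_\mu$ once its $\mathcal A^{(\mathfrak r)}_{\boldsymbol\varrho}$-norm is shown to be finite below. Since $b_{\nu,\mathfrak r}\geq1$, that norm dominates the $L^2_\mu$ norm. With this in hand,
\begin{equation*}
\norm{\partial_m p}^2_{\mathcal A^{(\mathfrak r)}_{\boldsymbol\varrho}}=\sum_{\nu:\nu_m\geq1}\nu_m\, b_{\nu-e_m,\mathfrak r}(\boldsymbol\varrho)\abs{p_\nu}^2 .
\end{equation*}

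By the product form of the weight, only the $m$th factor of $b_{\nu-e_m,\mathfrak r}$ differs from that of $b_{\nu,\mathfrak r+1}$; the remaining factors only increase when the truncation order goes from $\mathfrak r$ to $\mathfrak r+1$. Writing $n=\nu_m\geq1$, I use $n\binom{n-1}{\ell}=(\ell+1)\binom{n}{\ell+1}\leq(\mathfrak r+1)\binom{n}{\ell+1}$ for $\ell\leq\mathfrak r$. This gives
\begin{equation*}
n\sum_{\ell=0}^{\mathfrak r}\binom{n-1}{\ell}\varrho_m^{2\ell}\leq(\mathfrak r+1)\varrho_m^{-2}\sum_{k=1}^{\mathfrak r+1}\binom{n}{k}\varrho_m^{2k}.
\end{equation*}
This yields the key pointwise weight inequality
\begin{equation*}
\varrho_m^2\,\nu_m\, b_{\nu-e_m,\mathfrak r}(\boldsymbol\varrho)\leq(\mathfrak r+1)\,b_{\nu,\mathfrak r+1}(\boldsymbol\varrho),
\end{equation*}
which proves \eqref{eq:lognormal-derivative-component-bound}.

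For \eqref{eq:lognormal-derivative-weighted-sum-bound}, summing the previous identity over $m$ gives
\begin{equation*}
\sum_m\varrho_m^2\norm{\partial_m p}^2_{\mathcal A^{(\mathfrak r)}_{\boldsymbol\varrho}}\leq(\mathfrak r+1)\sum_\nu s(\nu)\,b_{\nu,\mathfrak r+1}(\boldsymbol\varrho)\abs{p_\nu}^2,
\end{equation*}
where $s(\nu)=\#\{m:\nu_m\geq1\}$ is the support size; the term $\nu=0$ contributes nothing. The main step, and the only real obstacle, is to trade the factor $s(\nu)$ for the stronger weight at $\tau\boldsymbol\varrho$. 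Write $F_m(\boldsymbol\varrho)=\sum_{\ell\leq\mathfrak r+1}\binom{\nu_m}{\ell}\varrho_m^{2\ell}$ for the $m$th factor, and set $X:=\sum_{\ell\geq1}\binom{\nu_m}{\ell}\varrho_m^{2\ell}$. For $\nu_m\geq1$, scaling by $\tau$ multiplies each $\ell\geq1$ term by at least $\tau^2$, so
\begin{equation*}
\frac{F_m(\boldsymbol\varrho)}{F_m(\tau\boldsymbol\varrho)}\leq\frac{1+X}{1+\tau^2X}.
\end{equation*}
This bound is decreasing in $X$, and $X\geq\varrho_m^2\geq\underline\varrho^{\,2}$, so it is at most $\chi$. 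Factors with $\nu_m=0$ equal $1$ at both scales.

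It follows that $b_{\nu,\mathfrak r+1}(\boldsymbol\varrho)\leq\chi^{s(\nu)}b_{\nu,\mathfrak r+1}(\tau\boldsymbol\varrho)$. Hence
\begin{equation*}
s(\nu)\,b_{\nu,\mathfrak r+1}(\boldsymbol\varrho)\leq\chi\,s(\nu)\chi^{s(\nu)-1}\,b_{\nu,\mathfrak r+1}(\tau\boldsymbol\varrho)\leq\sup_{s\geq1}s\chi^{s-1}\;b_{\nu,\mathfrak r+1}(\tau\boldsymbol\varrho),
\end{equation*}
using $\chi\leq1$ in the last step. This gives the claim with $C_{\log}$. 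Finiteness of $C_{\log}$ follows from $0<\chi<1$, which holds because $\tau>1$ and $\underline\varrho>0$.
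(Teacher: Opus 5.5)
Your proof is correct and follows essentially the same route as the paper: the Hermite identity $H_n'=\sqrt{n}\,H_{n-1}$, the binomial shift $n\binom{n-1}{\ell}=(\ell+1)\binom{n}{\ell+1}$ giving $\varrho_m^2\nu_m b_{\nu-e_m,\mathfrak r}(\boldsymbol\varrho)\leq(\mathfrak r+1)b_{\nu,\mathfrak r+1}(\boldsymbol\varrho)$, and a per-factor contraction by $\chi$ on the nonzero coordinates, which produces the $s(\nu)\chi^{s(\nu)-1}$ count. The only difference is bookkeeping. The paper compares order $\mathfrak r$ at $\boldsymbol\varrho$ directly with order $\mathfrak r+1$ at $\tau\boldsymbol\varrho$ on the $s(\nu)-1$ factors $j\neq m$. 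You instead first pass to order $\mathfrak r+1$ at $\boldsymbol\varrho$ and then rescale all $s(\nu)$ nonzero factors, which even yields the slightly better constant $\chi\,C_{\log}$. Your limit argument identifying the series with the Gaussian weak derivative is also a welcome detail that the paper leaves implicit.
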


\begin{proof}
    For $n\geq 0$ and $\varrho>0$, set
    \begin{equation}
        b_{n,\mathfrak r}(\varrho)
        :=
        \sum_{\ell=0}^{\mathfrak r}
        \binom{n}{\ell}\varrho^{2\ell}.
    \end{equation}
    Using
    $(n+1)\binom{n}{\ell}=(\ell+1)\binom{n+1}{\ell+1}$, we obtain
    \begin{equation}
        \begin{aligned}
            (n+1)b_{n,\mathfrak r}(\varrho)
             & =
            \sum_{\ell=0}^{\mathfrak r}
            (n+1)\binom{n}{\ell}\varrho^{2\ell}
            \\
             & =
            \sum_{k=1}^{\mathfrak r+1}
            k\binom{n+1}{k}\varrho^{2(k-1)}
            \\
             & \leq
            (\mathfrak r+1)\varrho^{-2}
            \sum_{k=1}^{\mathfrak r+1}
            \binom{n+1}{k}\varrho^{2k}
            \\
             & \leq
            (\mathfrak r+1)\varrho^{-2}
            b_{n+1,\mathfrak r+1}(\varrho).
        \end{aligned}
    \end{equation}
    Applying this estimate to the $m$-th factor in
    \eqref{eq:hermite-weight-lognormal}, and using
    $b_{\nu_j,\mathfrak r}(\varrho_j)
        \leq b_{\nu_j,\mathfrak r+1}(\varrho_j)$ for $j\neq m$, gives
    \begin{equation}
        \label{eq:lognormal-weight-shift}
        (\nu_m+1)b_{\nu,\mathfrak r}(\boldsymbol\varrho)
        \leq
        (\mathfrak r+1)\varrho_m^{-2}
        b_{\nu+e_m,\mathfrak r+1}(\boldsymbol\varrho).
    \end{equation}

    The Hermite derivative identity gives
    \begin{equation}
        \partial_m p
        =
        \sum_{\nu\in\mathcal F}
        \sqrt{\nu_m+1}\,p_{\nu+e_m}H_\nu
    \end{equation}
    in $L^2_\mu$.
    Therefore
    \begin{equation}
        \begin{aligned}
            \norm{\partial_m p}_{\mathcal A^{(\mathfrak r)}_{\boldsymbol\varrho}}^2
             & =
            \sum_{\nu\in\mathcal F}
            b_{\nu,\mathfrak r}(\boldsymbol\varrho)
            (\nu_m+1)
            \abs{p_{\nu+e_m}}^2
            \\
             & \leq
            (\mathfrak r+1)\varrho_m^{-2}
            \sum_{\nu\in\mathcal F}
            b_{\nu+e_m,\mathfrak r+1}(\boldsymbol\varrho)
            \abs{p_{\nu+e_m}}^2
            \\
             & \leq
            (\mathfrak r+1)\varrho_m^{-2}
            \norm{p}_{\mathcal A^{(\mathfrak r+1)}_{\boldsymbol\varrho}}^2.
        \end{aligned}
    \end{equation}
    This proves \eqref{eq:lognormal-derivative-component-bound}.

    It remains to prove
    \eqref{eq:lognormal-derivative-weighted-sum-bound}.
    For $n\geq1$ and $\varrho\geq\underline{\varrho}$, set
    \begin{equation}
        \widetilde b_{n,\mathfrak r}(\varrho)
        :=
        b_{n,\mathfrak r}(\varrho)-1
        =
        \sum_{\ell=1}^{\mathfrak r}
        \binom{n}{\ell}\varrho^{2\ell}.
    \end{equation}
    Since
    $\widetilde b_{n,\mathfrak r}(\varrho)\geq\varrho^2
        \geq\underline{\varrho}^{\,2}$, we have
    \begin{equation}
        \label{eq:lognormal-weight-ratio-contraction}
        \frac{
            b_{n,\mathfrak r}(\varrho)
        }{
            b_{n,\mathfrak r+1}(\tau\varrho)
        }
        \leq
        \frac{
            1+\widetilde b_{n,\mathfrak r}(\varrho)
        }{
            1+\tau^2\widetilde b_{n,\mathfrak r}(\varrho)
        }
        \leq
        \chi.
    \end{equation}
    For $\nu\in\mathcal F$, let $s(\nu)$ denote the number of its nonzero
    components.
    For fixed $m$ in the support of $\nu$, the shifted-weight estimate
    \eqref{eq:lognormal-weight-shift} controls the $m$th factor, while each of
    the other $s(\nu)-1$ nonzero factors contributes at most $\chi$.
    Summing over the support of $\nu$ and using
    $b_{n,\mathfrak r+1}(\varrho)
        \leq b_{n,\mathfrak r+1}(\tau\varrho)$ gives
    \begin{equation}
        \begin{aligned}
             & \sum_{\substack{m\geq1 \\ \nu_m\geq1}}
            \frac{
                \varrho_m^2\nu_m
                b_{\nu-e_m,\mathfrak r}(\boldsymbol\varrho)
            }{
                b_{\nu,\mathfrak r+1}(\tau\boldsymbol\varrho)
            }
            \\
             & \qquad\leq
            (\mathfrak r+1)
            s(\nu)\chi^{s(\nu)-1}
            \leq
            C_{\log}.
        \end{aligned}
    \end{equation}
    Therefore, using the Hermite derivative identity and exchanging the
    nonnegative sums,
    \begin{equation}
        \begin{aligned}
            \sum_{m\geq1}
            \varrho_m^2
            \norm{\partial_m p}_{
            \mathcal A^{(\mathfrak r)}_{\boldsymbol\varrho}
            }^2
             & =
            \sum_{\nu\in\mathcal F}
            \abs{p_\nu}^2
            \sum_{\substack{m\geq1 \\ \nu_m\geq1}}
            \varrho_m^2\nu_m
            b_{\nu-e_m,\mathfrak r}(\boldsymbol\varrho)
            \\
             & \leq
            C_{\log}
            \sum_{\nu\in\mathcal F}
            b_{\nu,\mathfrak r+1}(\tau\boldsymbol\varrho)
            \abs{p_\nu}^2.
        \end{aligned}
    \end{equation}
    This is \eqref{eq:lognormal-derivative-weighted-sum-bound}.
\end{proof}

We now combine the Hermite coefficient estimate for the lognormal elliptic
model with the abstract RKHS consequences from
Section~\ref{sec:sample-complexity-smoothness}.

\begin{theorem}[Smoothness consequences for the lognormal elliptic model]
    \label{thm:lognormal-pde-smoothness-consequences}
    Let $\mathfrak r\geq 2$ and let $\theta\in(0,1]$ satisfy
    \begin{equation}
        \label{eq:lognormal-smoothness-exponent-assumption}
        \theta>\frac{1}{\mathfrak r},
        \qquad
        (\varrho_m^{-1})_{m\geq 1}\in\ell^{2\theta}.
    \end{equation}
    Assume that the series
    $\sum_{m\geq1}\varrho_m\abs{\psi_m}$ converges in $L^\infty(D)$,
    that \eqref{eq:lognormal-assumption-a} holds, and that the
        {sequence $(\varrho_m)_{m\geq1}$} satisfies
    the smallness condition
    \begin{equation}
        \label{eq:lognormal-weighted-smallness}
        \sup_{x\in D}
        \sum_{m\geq 1}
        \varrho_m\abs{\psi_m(x)}
        <
        \frac{\log 2}{\sqrt{\mathfrak r+1}}.
    \end{equation}
    Let $u(\yb)$ solve \eqref{eq:parametric-elliptic-weak-form} with the
    lognormal coefficient \eqref{eq:lognormal-coefficient}, let $Q\in\calV'$,
    and set $f=Q\circ u$ as in \eqref{eq:pde-quantity-of-interest}.
    Set $\mathcal H:=\mathcal A^{(\mathfrak r)}_{\boldsymbol\varrho}$ and
    $\beta:=\theta/(1+\theta)$.
    Lemma~\ref{lem:lognormal-hermite-admissible-scales} ensures that
    $\mathcal H$ is an RKHS and that $\theta$ is an admissible exponent.
    Then the following statements hold.

    \begin{enumerate}[label=(\roman*)]
        \item The target and its coordinate derivatives satisfy
              \begin{equation}
                  \label{eq:lognormal-target-and-gradient-regularity}
                  f\in
                  \mathcal A^{(\mathfrak r+1)}_{\boldsymbol\varrho},
                  \qquad
                  \partial_m f
                  \in
                  \mathcal A^{(\mathfrak r)}_{\boldsymbol\varrho},
                  \quad m\geq 1.
              \end{equation}
              The coordinate derivatives are jointly summable in $\mathcal H$:
              \begin{equation*}
                  B_f
                  :=
                  \sum_{m\geq1}
                  \varrho_m^2
                  \norm{\partial_m f}_{\mathcal H}^2
                  <
                  \infty.
              \end{equation*}
              In particular, $f\in H^1_\mu(\R^d)$ and
              $\nabla f=(\partial_m f)_{m\geq1}$ belongs to
              $L^2_\mu(\R^d;\ell^2(d))$.

        \item Define the scalar-projection operator
              \begin{equation}
                  \label{eq:lognormal-scalar-projection-operator}
                  G:\ell^2(d)\to\mathcal H,
                  \qquad
                  Gv
                  :=
                  \sum_{m\geq1}v_m\partial_m f.
              \end{equation}
              Then $G$ is well defined and bounded, and the smoothness
              covariance satisfies
              \begin{equation}
                  \label{eq:lognormal-schatten-smoothness}
                  H
                  =
                  G^*G
                  \in
                  \calS_\beta(\ell^2(d)),
                  \quad
                  \norm{H}_{\calS_\beta(\ell^2(d))}
                  \leq
                  B_f
                  \left(
                  \sum_{m\geq1}
                  \varrho_m^{-2\theta}
                  \right)^{1/\theta}.
              \end{equation}

        \item The {regularized} kernel diagonal measure
              $\rho_{\alpha_\lambda}$ from
              \eqref{eq:smoothness-ridge-diagonal-proposal} is well defined for
              every $\lambda>0$ and satisfies
              \begin{equation}
                  \label{eq:lognormal-ridge-diagonal-coherence-bound}
                  K_{\rho_{\alpha_\lambda},\lambda}(\nabla f)
                  \leq
                  d_{\mathrm{eff}}(\alpha_\lambda,\gamma)
                  \leq
                  Z_\theta
                  \norm{H}_{\mathcal L(\ell^2(d))}^{\theta}
                  \lambda^{-\theta}.
              \end{equation}
              The $\theta$-kernel diagonal measure $\rho^\theta$ from
              \eqref{eq:smoothness-diagonal-proposal} satisfies, for every
              $\lambda>0$,
              \begin{equation}
                  \label{eq:lognormal-ridge-coherence-bound}
                  K_{\rho^\theta,\lambda}(\nabla f)
                  \leq
                  Z_\theta
                  \norm{H}_{\mathcal L(\ell^2(d))}^{\theta}
                  \lambda^{-\theta}.
              \end{equation}

        \item The active subspace tail energy satisfies, for every $r\geq 1$, the
              a priori bound
              \begin{equation}
                  \label{eq:lognormal-active-subspace-tail-bound}
                  \err(\Pi_r)^2
                  \leq
                  Z_\theta^{1/\theta}
                  \norm{H}_{\calS_\beta(\ell^2(d))}
                  r^{-2/\theta}.
              \end{equation}

    \end{enumerate}

\end{theorem}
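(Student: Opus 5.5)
The proof goes through the four items in order; the main external input is the weighted Hermite summability for the lognormal solution map from \cite{bachmayr2017sparseii}.

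For (i), I would first show $f\in\mathcal A^{(\mathfrak r+1)}_{\tau\boldsymbol\varrho}$ for some $\tau>1$. The smallness condition \eqref{eq:lognormal-weighted-smallness} is strict, so $\boldsymbol\varrho$ can be replaced by $\tau\boldsymbol\varrho$ with $\tau>1$ small while keeping the bound below $\log 2/\sqrt{\mathfrak r+1}$. The weighted summability theorem of \cite{bachmayr2017sparseii} then gives
\[
\sum_\nu b_{\nu,\mathfrak r+1}(\tau\boldsymbol\varrho)\norm{u_\nu}_\calV^2<\infty .
\]
This theorem rests on the bound $\sum_{\norm{\alpha}_{\ell^\infty}\le k}\frac{\boldsymbol\varrho^{2\alpha}}{\alpha!}\int\norm{\partial^\alpha u}_\calV^2\dif\mu<\infty$ together with the Rodrigues-type identity relating derivative moments to $b_{\nu,k}$. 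Since $Q$ is bounded, $\abs{f_\nu}\le\norm{Q}_{\calV'}\norm{u_\nu}_\calV$, and so $f\in\mathcal A^{(\mathfrak r+1)}_{\tau\boldsymbol\varrho}\subset\mathcal A^{(\mathfrak r+1)}_{\boldsymbol\varrho}$. Lemma~\ref{lem:lognormal-one-order-loss} then gives $\partial_m f\in\mathcal H$ and $B_f\le C_{\log}\norm{f}^2_{\mathcal A^{(\mathfrak r+1)}_{\tau\boldsymbol\varrho}}<\infty$. Because $b_{\nu,\mathfrak r}\ge1$, the $\mathcal H$-norm dominates the $L^2_\mu$-norm, and $\inf_m\varrho_m>0$ (which follows from $\varrho_m^{-1}\in\ell^{2\theta}$, hence $\varrho_m\to\infty$). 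Together these give $\sum_m\norm{\partial_mf}^2_{L^2_\mu}<\infty$. The coordinate characterization \eqref{eq:coordinate-weak-derivative-characterization} then yields $f\in H^1_\mu$. I expect this step to be the main obstacle: one has to check that the hypotheses of the cited theorem match with $\mathfrak r+1$ in place of $\mathfrak r$, including the constant $\log2/\sqrt{\mathfrak r+1}$, and justify the slack $\tau$.

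For (ii), Cauchy--Schwarz gives
\[
\norm{Gv}_\mathcal H\le\sum_m\abs{v_m}\norm{\partial_mf}_\mathcal H\le\Big(\sum_m\varrho_m^{-2}v_m^2\Big)^{1/2}B_f^{1/2},
\]
so $G$ is bounded and the series converges. I would then factor $G=\tilde G D$, where $D=\operatorname{diag}(\varrho_m^{-1})$ and $\tilde G e_m=\varrho_m\partial_mf$. The map $\tilde G$ is Hilbert--Schmidt with $\norm{\tilde G}_{\calS_2}^2=B_f$, and $D\in\calS_{2\theta}$ with $\norm{D}_{\calS_{2\theta}}^2=(\sum_m\varrho_m^{-2\theta})^{1/\theta}$. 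By Schatten--H\"older with $\frac1{2\beta}=\frac12+\frac1{2\theta}$, that is $\beta=\theta/(1+\theta)$, we get $G\in\calS_{2\beta}$ and
\[
\norm{H}_{\calS_\beta}=\norm{G}_{\calS_{2\beta}}^2\le B_f\Big(\sum_m\varrho_m^{-2\theta}\Big)^{1/\theta}.
\]
It remains to identify $Gv$ with the scalar projection $p_v=\inp{v}{\nabla f}$ in $L^2_\mu$, so that Assumption~\ref{ass:smoothness-covariance-bounded} holds; this follows because $\mathcal H\hookrightarrow L^2_\mu$ is continuous and $\nabla f\in L^2_\mu(\ell^2)$.

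For (iii), Assumption~\ref{ass:smoothness-covariance-bounded} holds and $\theta$ is admissible by Lemma~\ref{lem:lognormal-hermite-admissible-scales}. Positivity of the diagonals comes from the $H_0$ term, so $\rho_{\alpha_\lambda}$ is well defined. Proposition~\ref{prop:smoothness-ridge-diagonal-sampling} with $c_2=1$ and \eqref{eq:smoothness-ridge-theta-dimension-comparison} give the first bound, and Corollary~\ref{prop:smoothness-diagonal-sampling} gives the second.

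For (iv), Proposition~\ref{prop:smoothness-active-subspace-tail} applies with $\beta=\theta/(1+\theta)$. Then $q_{\theta,\beta}=\frac1\theta+\frac{1+\theta}{\theta}-1=\frac2\theta>0$, which is exactly \eqref{eq:lognormal-active-subspace-tail-bound}.
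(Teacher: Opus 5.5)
Your proposal is correct and follows the paper's proof essentially step by step. The shared steps are the slack factor $\tau>1$ from the strict smallness condition, the weighted Hermite summability of \cite{bachmayr2017sparseii} applied at order $\mathfrak r+1$, Lemma~\ref{lem:lognormal-one-order-loss} for $B_f$, the factorization $G=\tilde G D$ with Schatten--H\"older, and then Proposition~\ref{prop:smoothness-ridge-diagonal-sampling}, Corollary~\ref{prop:smoothness-diagonal-sampling} and Proposition~\ref{prop:smoothness-active-subspace-tail} with $q_{\theta,\beta}=2/\theta$. The differences are only cosmetic: you get $\inf_m\varrho_m>0$ from $(\varrho_m^{-1})\in\ell^{2\theta}$ rather than from \eqref{eq:lognormal-assumption-a}, and you spell out the Cauchy--Schwarz boundedness of $G$ and the identification $Gv=\inp{v}{\nabla f}$ in $L^2_\mu$, which the paper leaves implicit.
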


\begin{proof}
    The summability condition \eqref{eq:lognormal-assumption-a} implies
    $\underline{\varrho}:=\inf_{m\geq1}\varrho_m>0$.
    By the strict inequality in \eqref{eq:lognormal-weighted-smallness}, we
    may choose $\tau>1$ such that
    \begin{equation}
        \tau
        \sup_{x\in D}
        \sum_{m\geq1}
        \varrho_m\abs{\psi_m(x)}
        <
        \frac{\log 2}{\sqrt{\mathfrak r+1}}.
    \end{equation}
    The scaled weights $\tau\boldsymbol\varrho$ still satisfy Assumption A.
    Hence \cite[Theorems 2.1, 3.1, and 4.2]{bachmayr2017sparseii}, applied at
    order $\mathfrak r+1$, give
    \begin{equation}
        \label{eq:lognormal-solution-hermite-regularity}
        u
        =
        \sum_{\nu\in\mathcal F}u_\nu H_\nu,
        \qquad
        \sum_{\nu\in\mathcal F}
        b_{\nu,\mathfrak r+1}(\tau\boldsymbol\varrho)
        \norm{u_\nu}_{\calV}^2
        <
        \infty.
    \end{equation}
    Since $Q\in\calV'$, the same estimate holds for the Hermite coefficients of
    $f=Q\circ u$, and therefore
    $f\in\mathcal A^{(\mathfrak r+1)}_{\tau\boldsymbol\varrho}
        \subset\mathcal A^{(\mathfrak r+1)}_{\boldsymbol\varrho}$.
    Applying Lemma~\ref{lem:lognormal-one-order-loss} with $p=f$ gives
    \eqref{eq:lognormal-target-and-gradient-regularity} and
    \begin{equation}
        \label{eq:lognormal-gradient-weighted-summability}
        B_f
        =
        \sum_{m\geq1}
        \varrho_m^2
        \norm{\partial_m f}_{\mathcal H}^2
        \leq
        C_{\log}
        \norm{f}_{
        \mathcal A^{(\mathfrak r+1)}_{\tau\boldsymbol\varrho}
        }^2
        <
        \infty.
    \end{equation}
    Since $b_{\nu,\mathfrak r}(\boldsymbol\varrho)\geq1$ and
    $\varrho_m\geq\underline{\varrho}$,
    {it follows that}
    $\sum_m\norm{\partial_m f}_{L^2_\mu(\R^d)}^2
        \leq\underline{\varrho}^{-2}B_f<\infty$.
    The coordinate characterization
    \eqref{eq:coordinate-weak-derivative-characterization} therefore shows that
    $f\in H^1_\mu(\R^d)$ and identifies its gradient with
    $(\partial_m f)_{m\geq1}$.
    We factor $G$ into a weighted derivative operator and a diagonal rescaling.
    Define
    \begin{equation*}
        G_{\boldsymbol\varrho}e_m
        :=
        \varrho_m\partial_m f,
        \qquad
        R^{-1}e_m
        :=
        \varrho_m^{-1}e_m.
    \end{equation*}
    By \eqref{eq:lognormal-gradient-weighted-summability} and
    $(\varrho_m^{-1})_{m\geq1}\in\ell^{2\theta}$,
    \begin{equation*}
        \begin{aligned}
            G_{\boldsymbol\varrho}
             & \in
            \calS_2(\ell^2(d),\mathcal H),
             &
            \norm{G_{\boldsymbol\varrho}}_{\calS_2}^2
             & =
            B_f,
            \\
            R^{-1}
             & \in
            \calS_{2\theta}(\ell^2(d)),
             &
            \norm{R^{-1}}_{\calS_{2\theta}}^2
             & =
            \left(
            \sum_{m\geq1}\varrho_m^{-2\theta}
            \right)^{1/\theta}.
        \end{aligned}
    \end{equation*}
    Since $G_{\boldsymbol\varrho}R^{-1}e_m=\partial_m f$, the series in
    \eqref{eq:lognormal-scalar-projection-operator} converges in $\mathcal H$ and
    \begin{equation*}
        G=G_{\boldsymbol\varrho}R^{-1}.
    \end{equation*}
    By the definition of the {RKHS covariance}, $H=G^*G$.
    Finally, $1/(2\beta)=1/2+1/(2\theta)$, and the Schatten--H\"older inequality
    yields
    \begin{equation}
        \begin{aligned}
            \norm{H}_{\calS_\beta(\ell^2(d))}
             & =
            \norm{G}_{\calS_{2\beta}(\ell^2(d),\mathcal H)}^2
            \\
             & \leq
            \norm{G_{\boldsymbol\varrho}}_{
                \calS_2(\ell^2(d),\mathcal H)}^2
            \norm{R^{-1}}_{\calS_{2\theta}(\ell^2(d))}^2
            \\
             & =
            B_f
            \left(
            \sum_{m\geq1}\varrho_m^{-2\theta}
            \right)^{1/\theta}.
        \end{aligned}
    \end{equation}
    This proves \eqref{eq:lognormal-schatten-smoothness}.

    The constant Hermite mode ensures that
    $\kappa_{\alpha_\lambda}(\yb)>0$ for every $\yb$, while
    \eqref{eq:smoothness-ridge-theta-dimension-comparison} and
    \eqref{eq:smoothness-ridge-theta-dimension-comparison} show that
    the {regularized} kernel diagonal measure is well defined and equivalent to $\mu$.
    Proposition~\ref{prop:smoothness-ridge-diagonal-sampling} gives the first
    inequality in \eqref{eq:lognormal-ridge-diagonal-coherence-bound}, and
    \eqref{eq:smoothness-ridge-to-theta-effective-dimension} gives the second.
    Admissibility gives finiteness of $\kappa^\theta$ $\mu$-a.e., while the
    positive constant Hermite mode makes it strictly positive.
    Hence $\rho^\theta\sim\mu$, and
    \eqref{eq:lognormal-ridge-coherence-bound} follows {directly} from
    Corollary~\ref{prop:smoothness-diagonal-sampling}.
    Finally, Proposition~\ref{prop:smoothness-active-subspace-tail} gives
    \eqref{eq:lognormal-active-subspace-tail-bound}, since
    $q_{\theta,\beta}=2/\theta$.
\end{proof}

The weight-summability condition
$(\varrho_m^{-1})_{m\geq 1}\in\ell^{2\theta}$ supplies two regularity
properties.
It gives the kernel spectral summability $Z_\theta<\infty$, which enters both
the coherence and tail estimates.
Together with the weighted derivative estimate
\eqref{eq:lognormal-gradient-weighted-summability}, it also gives the
coordinate-index summability $H\in\calS_\beta$, which enters only the tail
estimate.
Here $\beta=\theta/(1+\theta)$ and hence
$q_{\theta,\beta}=2/\theta$.
The resulting coherence grows as $\lambda^{-\theta}$, while the active
subspace tail energy decays as $r^{-2/\theta}$.
Thus the anisotropy of the parametric model governs both the sample size and the
active subspace rank.

We next make the resulting tolerance-dependent complexity bound explicit.
Since $\beta=\theta/(1+\theta)$, the constants in
Theorem~\ref{thm:smoothness-tolerance-complexity} are
\begin{equation}
    A_{\theta,\infty}
    =
    Z_\theta
    \norm{H}_{\mathcal L(\ell^2(d))}^{\theta},
    \qquad
    A_{\theta,\beta}
    =
    Z_\theta
    \norm{H}_{\calS_\beta(\ell^2(d))}^{\theta},
\end{equation}
where $A_{\theta,\infty}$ controls the sampling condition and
$A_{\theta,\beta}$ controls the rank required for a prescribed tolerance.
The {explicit} sufficient sample bound
    {\eqref{eq:smoothness-tolerance-sample-condition}} applies {to both the exact
        regularized kernel diagonal measure and} the $\theta$-kernel diagonal measure
$\rho^\theta$.
Since $q_{\theta,\beta}=2/\theta$, the choices
\begin{equation}
    \label{eq:lognormal-informal-tolerance-rates}
    r_{\tol}
    =
    \left\lceil
    A_{\theta,\beta}^{1/2}
    \tol^{-\theta}
    \right\rceil,
    \qquad
    \lambda_{\tol}
    =
    \frac{\tol^2}{r_{\tol}}
\end{equation}
agree with \eqref{eq:smoothness-rank-from-tolerance} and
\eqref{eq:smoothness-tolerance-ridge-scale}.
If $M$ satisfies the condition
\eqref{eq:smoothness-tolerance-sample-condition}, then the empirical projector
satisfies \eqref{eq:smoothness-tolerance-error} with probability at least
$1-\eta$.
After inserting $q_{\theta,\beta}=2/\theta$ in that condition, the leading
algebraic power of $\tol^{-1}$ outside the logarithm is
$2\theta+\theta^2$.

It is also interesting to compare the active subspace {approximation} estimate above with the natural axis-aligned
benchmark, that is the ridge approximation depending only on the best $r$ coordinates according to the weights $\varrho_m$.
    {For the active subspace, since} $\mu$ is the standard Gaussian product measure,
Proposition~\ref{prop:gaussian-conditional-poincare} gives the subspace conditional
Poincar\'e inequality with constant one.
Consequently, if $V_r:\R^r\to\ell^2(d)$ is an isometry satisfying
$\Pi_{V_r}=\Pi_r$, then
\begin{equation}
    \label{eq:lognormal-as-ridge-error}
    \norm{
        f-\mathbb E[f\mid \mathcal G_{V_r}]
    }_{L^2_\mu(\R^d)}
    \leq
    \err(\Pi_r)
    \leq
    \left(
    Z_\theta^{1/\theta}
    \norm{H}_{\calS_\beta(\ell^2(d))}
    \right)^{1/2}
    r^{-1/\theta}.
\end{equation}
For comparison, relabel the coordinates so that
$(\varrho_m^{-1})_{m\geq1}$ is nonincreasing.
The best ridge approximation depending only on the first $r$ coordinates is
the conditional expectation $\mathbb E[f\mid {\pi_1},\ldots,{\pi_r}]$.
Since $f\in\mathcal A^{(\mathfrak r+1)}_{\boldsymbol\varrho}$ and
$b_{\nu,\mathfrak r+1}(\boldsymbol\varrho)
    \geq 1+\varrho_{r+1}^2$
whenever $\nu_m>0$ for some $m>r$, Parseval's identity gives
\begin{equation}
    \label{eq:lognormal-coordinate-ridge-error}
    \begin{aligned}
        \norm{
            f-\mathbb E[f\mid {\pi_1},\ldots,{\pi_r}]
        }_{L^2_\mu(\R^d)}^2
         & =
        \sum_{\substack{\nu\in\mathcal F \\
                \nu_m>0\ \mathrm{for\ some}\ m>r}}
        \abs{f_\nu}^2
        \\
         & \leq
        \frac{
        \norm{f}_{\mathcal A^{(\mathfrak r+1)}_{\boldsymbol\varrho}}^2
        }{
        1+\varrho_{r+1}^2
        }
        \\
         & \leq
        \norm{f}_{\mathcal A^{(\mathfrak r+1)}_{\boldsymbol\varrho}}^2
        \left(
        \sum_{m\geq1}
        \varrho_m^{-2\theta}
        \right)^{1/\theta}
        (r+1)^{-1/\theta},
    \end{aligned}
\end{equation}
where the last inequality uses
$(1+\varrho_{r+1}^2)^{-1}\leq\varrho_{r+1}^{-2}$, the monotonicity of
$(\varrho_m^{-1})_{m\geq1}$, and
\begin{equation}
    (r+1)\varrho_{r+1}^{-2\theta}
    \leq
    \sum_{m\geq1}\varrho_m^{-2\theta}.
\end{equation}
Thus, at the level of these a priori projection-error bounds, the active subspace
approximation doubles the decay exponent relative to coordinate truncation,
improving $r^{-1/(2\theta)}$ to $r^{-1/\theta}$.
The admissibility restriction $\theta>1/\mathfrak r$ shows the benefit of using
larger Hermite regularity orders: increasing $\mathfrak r$ permits smaller values of
$\theta$, hence faster decay exponents in the displayed bounds.
    {Although \eqref{eq:lognormal-weighted-smallness} becomes more
        stringent with $\mathfrak r$ for fixed weights, for any fixed $\mathfrak r$ it
        can be enforced by replacing $\boldsymbol\varrho$ with
        $c\boldsymbol\varrho$ for sufficiently small $c>0$.
        This rescaling preserves reciprocal-weight summability, and hence
        \eqref{eq:lognormal-assumption-a}, at the price of worse constants in the
        resulting estimates.}

Finally, let us remark that, for the lognormal model, the coefficient $a(x,\yb)$ is positive for
$\mu$-almost every $\yb$, but it does not admit parameter-uniform ellipticity
constants over all $\yb\in\R^d$.
Consequently, the elementary bounded-gradient estimate
\eqref{eq:Klambda-bounded-gradient} cannot be applied under base Gaussian
sampling on the basis of a parameter-uniform gradient bound.
This argument therefore does not yield a finite base-measure coherence bound
for the lognormal model.
By contrast, the two kernel diagonal measures above turn weighted Hermite
regularity into finite {regularized coherence} bounds.
The {regularized} kernel diagonal measure gives the sharper kernel effective-dimension bound
\eqref{eq:lognormal-ridge-diagonal-coherence-bound}, but its definition depends
on the {regularization} scale $\lambda$ and the smoothness scale
$\norm{H}_{\mathcal L(\ell^2(d))}$.
The $\theta$-kernel diagonal measure
\eqref{eq:smoothness-diagonal-proposal} is independent of the {regularization} scale and
gives the explicit upper bound \eqref{eq:lognormal-ridge-coherence-bound}.

\subsection{The uniform model}
\label{subsec:uniform-model}

We next consider the affine uniform model from \cite{bachmayr2017sparsei}.
{Throughout this subsection $\mu$ is the product uniform probability measure supported on $[-1,1]^d\subset\Gamma$, hence we restrict $\yb\in[-1,1]^d$.}
Here
\begin{equation}
    \label{eq:uniform-coefficient}
    a(x,\yb)
    =
    \bar a(x)+\sum_{m\geq 1}y_m\psi_m(x).
\end{equation}
As before, $d$ may be finite or countably infinite.
We assume that $\bar a\in L^\infty(D)$ is strictly positive and write
\begin{equation}
    \bar a_{\min}
    :=
    \operatorname*{ess\,inf}_{x\in D}\bar a(x)>0.
\end{equation}
We assume that there exists a sequence
$\boldsymbol\varrho=(\varrho_m)_{m\geq1}$ with $\varrho_m>1$ satisfying the
weighted uniform ellipticity condition
\begin{equation}
    \label{eq:uniform-weighted-ellipticity}
    \delta_{\boldsymbol\varrho}
    :=
    \left\|
    \frac{
        \sum_{m\geq 1}\varrho_m\abs{\psi_m}
    }{
        \bar a
    }
    \right\|_{L^\infty(D)}
    <1.
\end{equation}
Since $\varrho_m>1$, this implies the unweighted uniform ellipticity condition
\begin{equation}
    \label{eq:uniform-ellipticity-bounds}
    (1-\delta_{\boldsymbol\varrho})\bar a(x)
    \leq
    a(x,\yb)
    \leq
    (1+\delta_{\boldsymbol\varrho})\bar a(x).
\end{equation}
Thus, the weak problem \eqref{eq:parametric-elliptic-weak-form} is uniformly
coercive over the full parameter domain.

Let $\{L_n\}_{n\geq 0}$ denote the $L^2_{\mu_1}([-1,1])$-orthonormal Legendre
polynomials, where $\mu_1$ is the uniform probability measure on $[-1,1]$.
For $\nu\in\mathcal F$, set
\begin{equation}
    L_\nu(\yb)
    :=
    \prod_{m\geq 1}L_{\nu_m}(y_m),
    \qquad
    \omega_\nu
    :=
    \prod_{m\geq 1}\sqrt{2\nu_m+1}.
\end{equation}
Then, $\{L_\nu\}_{\nu\in\mathcal F}$ is the tensorized normalized Legendre basis
of $L^2_\mu(\Gamma)$, which satisfies the uniform bound
\begin{equation}
    \norm{L_\nu}_{L^\infty(\Gamma)}\leq\omega_\nu
\end{equation}
since
\begin{equation}
    \sup_{y\in[-1,1]}\abs{L_n(y)}
    \leq
    \sqrt{2n+1}.
\end{equation}

The weighted ellipticity condition also yields anisotropic decay of the
Legendre coefficients.
More precisely, under \eqref{eq:uniform-weighted-ellipticity},
\cite[Theorem 3.1]{bachmayr2017sparsei} gives the Legendre expansion
\begin{equation}
    u(\yb)
    =
    \sum_{\nu\in\mathcal F}u_\nu L_\nu(\yb)
\end{equation}
with
\begin{equation}
    \label{eq:uniform-solution-legendre-regularity}
    \sum_{\nu\in\mathcal F}
    \left(
    \frac{\boldsymbol\varrho^\nu}{\omega_\nu}
    \norm{u_\nu}_{\calV}
    \right)^2
    \leq
    C_{\mathrm{UE}},
\end{equation}
where
\begin{equation}
    \label{eq:uniform-pde-constant}
    C_{\mathrm{UE}}
    :=
    \frac{
        (2-\delta_{\boldsymbol\varrho})(1+\delta_{\boldsymbol\varrho})
        \norm{\bar a}_{L^\infty(D)}^2
        \norm{f_D}_{\calV'}^2
    }{
        2(1-\delta_{\boldsymbol\varrho})^4\bar a_{\min}^4
    }.
\end{equation}

Motivated by \eqref{eq:uniform-solution-legendre-regularity}, for any sequence
$\boldsymbol\zeta=(\zeta_m)_{m\geq 1}$ with $\zeta_m>1$, we define the weighted
Legendre space
\begin{equation}
    \label{eq:uniform-legendre-rkhs}
    \mathcal A_{\boldsymbol\zeta}
    :=
    \left\{
    p=\sum_{\nu\in\mathcal F}p_\nu L_\nu
    \in L^2_\mu(\Gamma)
    :
    \norm{p}_{\mathcal A_{\boldsymbol\zeta}}^2
    :=
    \sum_{\nu\in\mathcal F}
    \frac{\boldsymbol\zeta^{2\nu}}{\omega_\nu^2}
    \abs{p_\nu}^2
    <
    \infty
    \right\}.
\end{equation}
The embedding of this weighted space into $L^2_\mu(\Gamma)$ is diagonal in the
Legendre basis.
In the notation of Section~\ref{sec:sample-complexity-smoothness}, the
eigenfunctions are $\phi_\nu=L_\nu$ and the corresponding eigenvalues are
$\gamma_\nu=\omega_\nu^2\boldsymbol\zeta^{-2\nu}$.
The corresponding $\theta$-kernel diagonal and trace are
\begin{equation}
    \label{eq:uniform-legendre-diagonal}
    \kappa_{\boldsymbol\zeta}^\theta(\yb)
    :=
    \sum_{\nu\in\mathcal F}
    \omega_\nu^{2\theta}
    \boldsymbol\zeta^{-2\theta\nu}
    L_\nu(\yb)^2
\end{equation}
and
\begin{equation}
    \label{eq:uniform-legendre-normalization}
    Z_{\theta,\boldsymbol\zeta}
    =
    \int_\Gamma
    \kappa_{\boldsymbol\zeta}^\theta(\yb)
    \,\dif\mu(\yb)
    =
    \sum_{\nu\in\mathcal F}
    \omega_\nu^{2\theta}
    \boldsymbol\zeta^{-2\theta\nu}.
\end{equation}

\begin{lemma}[Admissible exponents and bounded Legendre kernel diagonal]
    \label{lem:uniform-legendre-admissible-scales}
    {Let $\boldsymbol\zeta=(\zeta_m)_{m\geq 1}$ satisfy
        $\zeta_m>1$ and $(\zeta_m^{-1})_{m\geq1}\in\ell^2$.
        Then $\mathcal A_{\boldsymbol\zeta}$ is an RKHS.
        Moreover, every $\theta\in(0,1]$ satisfying}
    \begin{equation}
        \label{eq:uniform-legendre-theta-admissibility}
        {(\zeta_m^{-1})_{m\geq 1}\in\ell^{2\theta}}
    \end{equation}
    {is an admissible exponent for
    $\mathcal A_{\boldsymbol\zeta}$ and satisfies}
    \begin{equation}
        \label{eq:uniform-legendre-diagonal-bound}
        {\norm{
                \kappa_{\boldsymbol\zeta}^\theta
            }_{L^\infty(\Gamma)}
            <\infty.}
    \end{equation}
\end{lemma}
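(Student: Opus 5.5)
The plan is to reduce everything to a product computation over coordinates, in the spirit of the Hermite case (Lemma~\ref{lem:lognormal-hermite-admissible-scales}), using the uniform bound $\norm{L_\nu}_{L^\infty(\Gamma)}\leq\omega_\nu$.

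The central estimate is the pointwise bound
\begin{equation*}
    \kappa_{\boldsymbol\zeta}^\theta(\yb)
    \leq
    \sum_{\nu\in\mathcal F}\omega_\nu^{2\theta+2}\boldsymbol\zeta^{-2\theta\nu}
    =
    \prod_{m\geq1}\Bigl(\sum_{n\geq0}(2n+1)^{1+\theta}\zeta_m^{-2\theta n}\Bigr),
\end{equation*}
valid for every $\yb\in[-1,1]^d$. The factorization over $\mathcal F$ is justified by Tonelli, since all terms are nonnegative and the multi-indices are finitely supported. Each factor equals $1+\sum_{n\geq1}(2n+1)^{1+\theta}t_m^n$ with $t_m:=\zeta_m^{-2\theta}<1$.

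To show the product is finite, I will bound each factor by $1+c(\bar t)\,t_m$. Here $c(\bar t):=\sum_{n\geq1}(2n+1)^{1+\theta}\bar t^{\,n-1}$ and $\bar t:=\sup_m t_m$, using $t_m^n\leq t_m\bar t^{\,n-1}$. This $c(\bar t)$ is finite provided $\bar t<1$. Then $\log\prod_m(1+c\,t_m)\leq c\sum_m\zeta_m^{-2\theta}<\infty$ by the hypothesis \eqref{eq:uniform-legendre-theta-admissibility}.

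The point to check is $\bar t<1$. Since $\zeta_m^{-1}\to0$, only finitely many $\zeta_m$ lie below any fixed threshold, and each of these satisfies $\zeta_m>1$. So $\sup_m\zeta_m^{-1}<1$ is a maximum over finitely many terms combined with a tail that tends to zero. This is the only real obstacle, and it is a mild one. The argument gives \eqref{eq:uniform-legendre-diagonal-bound}, and the bound holds everywhere on $[-1,1]^d$, not only almost everywhere.

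Admissibility then follows quickly. Integrating the diagonal gives $Z_{\theta,\boldsymbol\zeta}\leq\norm{\kappa^\theta_{\boldsymbol\zeta}}_{L^\infty}<\infty$; alternatively, the same product argument with exponent $\theta$ in place of $1+\theta$ gives this directly. Positivity $\kappa^\theta_{\boldsymbol\zeta}(\yb)>0$ comes from the constant mode $\nu=0$, for which $L_0=1$ and $\omega_0=1$. Thus $\theta$ satisfies \eqref{eq:smoothness-admissible-scale}.

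For the RKHS claim, apply the above with $\theta=1$, which is allowed since $(\zeta_m^{-1})\in\ell^2$. This shows $\sup_{\yb}\sum_\nu\gamma_\nu L_\nu(\yb)^2<\infty$ with $\gamma_\nu=\omega_\nu^2\boldsymbol\zeta^{-2\nu}$. By Cauchy--Schwarz, for $p\in\mathcal A_{\boldsymbol\zeta}$,
\begin{equation*}
    \sum_\nu|p_\nu||L_\nu(\yb)|
    \leq
    \norm{p}_{\mathcal A_{\boldsymbol\zeta}}\Bigl(\sum_\nu\gamma_\nu L_\nu(\yb)^2\Bigr)^{1/2}.
\end{equation*}
Hence the Legendre series of $p$ converges absolutely and uniformly on $[-1,1]^d$, and point evaluation is a bounded functional. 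So $\mathcal A_{\boldsymbol\zeta}$ is an RKHS with kernel $\sum_\nu\gamma_\nu L_\nu(\yb)L_\nu(\zb)$ and integrable diagonal. Injectivity of the embedding is immediate, since the space is defined through its $L^2_\mu$ coefficients.
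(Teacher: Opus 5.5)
Your proposal is correct and follows essentially the same route as the paper: bound $\kappa^\theta_{\boldsymbol\zeta}$ using $\norm{L_\nu}_{L^\infty(\Gamma)}\leq\omega_\nu$, factor the resulting nonnegative sum over coordinates, and show that the infinite product converges by bounding each factor by $1+C\,t_m$ with $\sum_m t_m<\infty$. The differences are only organizational. You derive the RKHS property (the case $\theta=1$) and $Z_{\theta,\boldsymbol\zeta}\leq\norm{\kappa^\theta_{\boldsymbol\zeta}}_{L^\infty(\Gamma)}$ from the single uniform bound, whereas the paper runs three parallel product computations. You also control all factors at once via $\sup_m\zeta_m^{-2\theta}<1$, whereas the paper splits off finitely many factors.
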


\begin{proof}
    {Using the Legendre bound and
        $(\zeta_m^{-1})_{m\geq1}\in\ell^2$, we have}
    \begin{equation*}
        {
            \sum_{\nu\in\mathcal F}
            \omega_\nu^2
            \boldsymbol\zeta^{-2\nu}
            L_\nu(\yb)^2
            \leq
            \prod_{m\geq1}
            \left(
            \sum_{n\geq0}
                (2n+1)^2\zeta_m^{-2n}
            \right)
            <\infty.}
    \end{equation*}
    {The product is finite because
    $(\zeta_m^{-2})_{m\geq1}\in\ell^1$ and
    $\sum_{n\geq0}(2n+1)^2t^n=1+O(t)$ as $t\to0$.
    Hence point evaluation is bounded on
    $\mathcal A_{\boldsymbol\zeta}$, which is therefore an RKHS.}

    Since the Legendre basis is orthonormal in $L^2_\mu(\Gamma)$,
    \eqref{eq:uniform-legendre-normalization} gives
    \begin{equation}
        Z_{\theta,\boldsymbol\zeta}
        =
        \sum_{\nu\in\mathcal F}
        \omega_\nu^{2\theta}
        \boldsymbol\zeta^{-2\theta\nu}
        =
        \prod_{m\geq 1}
        \left(
        \sum_{n\geq 0}
            (2n+1)^\theta \zeta_m^{-2\theta n}
        \right),
    \end{equation}
    where the infinite product is understood through monotone convergence over
    finite coordinate sets.
    Set $t_m:=\zeta_m^{-2\theta}$.
    Since $\sum_m t_m<\infty$, there exist $q\in(0,1)$ and
    $\bar m\in\N$ such that $t_m\leq q$ for all $m\geq\bar m$.
    For those $m$, the function
    \begin{equation}
        t
        \mapsto
        \sum_{n\geq 1}
        (2n+1)^\theta t^{n-1}
    \end{equation}
    is bounded on $[0,q]$.
    Hence there exists $C_{\theta,q}>0$ such that
    \begin{equation}
        \sum_{n\geq 0}
        (2n+1)^\theta t_m^n
        \leq
        1+C_{\theta,q}t_m,
        \qquad
        m\geq\bar m.
    \end{equation}
    The infinite product therefore converges, while the finitely many remaining
    factors are finite because $\zeta_m>1$.
    This proves $Z_{\theta,\boldsymbol\zeta}<\infty$.

    It remains to prove the uniform $\theta$-kernel diagonal bound.
    Recalling that $\norm{L_\nu}_{L^\infty(\Gamma)}\leq\omega_\nu$, we have
    \begin{equation}
        \kappa_{\boldsymbol\zeta}^\theta(\yb)
        \leq
        \sum_{\nu\in\mathcal F}
        \omega_\nu^{2(1+\theta)}
        \boldsymbol\zeta^{-2\theta\nu}
        =
        \prod_{m\geq 1}
        \left(
        \sum_{n\geq 0}
            (2n+1)^{1+\theta}\zeta_m^{-2\theta n}
        \right).
    \end{equation}
    The same finite-tail argument applies with the polynomial factor
    $(2n+1)^{1+\theta}$ and proves
    \eqref{eq:uniform-legendre-diagonal-bound}.
    {Finally, the constant Legendre mode shows that
    $\kappa_{\boldsymbol\zeta}^\theta$ is strictly positive.
    Together with $Z_{\theta,\boldsymbol\zeta}<\infty$, this proves that
    $\theta$ is admissible.}
\end{proof}

For Legendre expansions, differentiation mixes several lower polynomial
degrees.
Reducing the weights from $\boldsymbol\varrho$ to
$\boldsymbol\zeta=\alpha\boldsymbol\varrho$ {with $\alpha<1$}
absorbs this growth and also
controls the weighted sum of all coordinate derivatives.

\begin{lemma}[Legendre differentiation and weighted derivative summability]
    \label{lem:uniform-legendre-alpha-radius-loss}
    Let $\boldsymbol\varrho=(\varrho_m)_{m\geq 1}$ satisfy
        {$\varrho_m>1$ for every $m\geq1$ and}
    $(\varrho_m^{-1})_{m\geq1}\in\ell^2$, {so that}
    $\underline{\varrho}:=\inf_m\varrho_m>1$.
    Choose $\alpha\in(\underline{\varrho}^{-1},1)$ and set
    \begin{equation}
        \label{eq:uniform-zeta-alpha-choice}
        \zeta_m
        :=
        \alpha\varrho_m,
        \qquad
        m\geq 1.
    \end{equation}
    Then there exists a constant $c_\alpha>0$, depending only on $\alpha$, such
    that, for every $m\geq 1$ and every
    $p\in\mathcal A_{\boldsymbol\varrho}$,
    one has $\partial_m p\in\mathcal A_{\boldsymbol\zeta}$ and
    \begin{equation}
        \label{eq:uniform-legendre-derivative-component-bound}
        \norm{\partial_m p}_{\mathcal A_{\boldsymbol\zeta}}^2
        \leq
        c_\alpha\varrho_m^{-2}
        \norm{p}_{\mathcal A_{\boldsymbol\varrho}}^2.
    \end{equation}
    Moreover,
    \begin{equation}
        \label{eq:uniform-legendre-derivative-weighted-sum-bound}
        \sum_{m\geq1}
        \varrho_m^2
        \norm{\partial_m p}_{\mathcal A_{\boldsymbol\zeta}}^2
        \leq
        C_{\mathrm{unif}}
        \norm{p}_{\mathcal A_{\boldsymbol\varrho}}^2,
    \end{equation}
    where
    \begin{equation}
        \label{eq:uniform-legendre-derivative-weighted-sum-constant}
        C_{\mathrm{unif}}
        :=
        c_\alpha
        \sup_{s\geq1}
        s\alpha^{2(s-1)}.
    \end{equation}
\end{lemma}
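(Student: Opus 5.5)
The plan is to work coordinatewise in the Legendre coefficients, using the explicit differentiation formula for normalized Legendre polynomials. Recall that $P_n'=\sum_{k<n,\ n-k\ \mathrm{odd}}(2k+1)P_k$. With $L_n=\sqrt{2n+1}\,P_n$ this becomes
\begin{equation*}
  L_n'=\sum_{\substack{k<n\\ n-k\ \mathrm{odd}}}\sqrt{2n+1}\,\sqrt{2k+1}\,L_k .
\end{equation*}
Hence, for $p=\sum_\nu p_\nu L_\nu$ and a multi-index $\nu$ with $\nu_m=k$, the $\nu$-th coefficient of $\partial_m p$ is
\begin{equation*}
  \sqrt{2k+1}\sum_{\substack{n>k\\ n-k\ \mathrm{odd}}}\sqrt{2n+1}\,p_{\nu+(n-k)e_m}.
\end{equation*}
I would first justify this termwise identity in $L^2_\mu$. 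One route is to check it on finite Legendre sums, where the weak derivative of \eqref{eq:uniform-weak-derivative} is classical. Then pass to the limit using the a priori bound below together with closedness of the weak derivative.

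The factor $\sqrt{2k+1}$ cancels exactly the $m$-th factor of $\omega_\nu$ in the weight $\boldsymbol\zeta^{2\nu}/\omega_\nu^2$. So I will introduce the normalized coefficients
\begin{equation*}
  q_\nu:=\frac{\boldsymbol\varrho^\nu}{\omega_\nu}\,p_\nu,
  \qquad
  \sum_\nu\abs{q_\nu}^2=\norm{p}_{\mathcal A_{\boldsymbol\varrho}}^2 .
\end{equation*}
In the $m$-th coordinate the contribution then reduces to a discrete operator with kernel
\begin{equation*}
  K_m(k,n)=(2n+1)\,\alpha^{k}\,\varrho_m^{-(n-k)},\qquad n>k.
\end{equation*}
This kernel is multiplied by $\prod_{j\neq m}\alpha^{2\nu_j}\leq1$ coming from the other coordinates, since $\zeta_j=\alpha\varrho_j$.

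The key elementary estimate uses $\alpha>\underline{\varrho}^{-1}$, which gives $\varrho_m^{-1}<\alpha$. Consequently $\varrho_m^{-(n-k-1)}\leq\alpha^{n-k-1}$, and therefore
\begin{equation*}
  K_m(k,n)\leq\varrho_m^{-1}(2n+1)\,\alpha^{n-1}.
\end{equation*}
To bound the squared sum over $n$ while keeping each $q$ localized, I would apply Cauchy--Schwarz with weights $a_n=\alpha^{n}$:
\begin{equation*}
  \Bigl|\sum_n K_m(k,n)\,q_n\Bigr|^2
  \leq
  \Bigl(\sum_n\alpha^n\Bigr)\sum_n\frac{K_m(k,n)^2}{\alpha^n}\,\abs{q_n}^2 .
\end{equation*}
Summing over $k<n$ then produces, for each fixed $n$, the factor
\begin{equation*}
  \varrho_m^{-2}\,\frac{1}{1-\alpha}\;n(2n+1)^2\,\alpha^{n-2}.
\end{equation*}
This is bounded uniformly in $n$ by a constant $c_\alpha\varrho_m^{-2}$ depending only on $\alpha$. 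Summing over the remaining indices gives \eqref{eq:uniform-legendre-derivative-component-bound}, and in particular $\partial_mp\in\mathcal A_{\boldsymbol\zeta}$.

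For the weighted sum \eqref{eq:uniform-legendre-derivative-weighted-sum-bound} I would redo the same bookkeeping while keeping track of the factors $\alpha^{2\nu_j}$ for $j\neq m$ instead of discarding them. A fixed coefficient $q_\nu$ only enters $\partial_mp$ when $\nu_m\geq1$, so $m$ must lie in the support of $\nu$. Multiplying by $\varrho_m^2$ cancels the $\varrho_m^{-2}$ above. Each of the other $s(\nu)-1$ nonzero coordinates $j$ contributes $\alpha^{2\nu_j}\leq\alpha^2$. Exchanging the nonnegative sums, the total weight attached to $\abs{q_\nu}^2$ is therefore at most $c_\alpha\,s(\nu)\,\alpha^{2(s(\nu)-1)}\leq C_{\mathrm{unif}}$. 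This mirrors the argument for \eqref{eq:lognormal-derivative-weighted-sum-bound} in Lemma~\ref{lem:lognormal-one-order-loss}.

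I expect the main obstacle to be organizing the Cauchy--Schwarz step so that each $q_\nu$ is charged only once with the stated factors. This matters because differentiation mixes several degrees, so one coefficient $q_{\nu}$ feeds many output indices $(k,\nu')$. The weighted choice $a_n=\alpha^n$ is what keeps the charge to each input coefficient bounded uniformly. A naive Hilbert--Schmidt bound would lose the localization needed to combine with the $s(\nu)\alpha^{2(s-1)}$ count.
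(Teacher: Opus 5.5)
Your argument is correct and follows the paper's proof: the normalized Legendre derivative formula, the cancellation of $\sqrt{2k+1}$ against the $m$th factor of $\omega_\nu$, a per-$k$ Cauchy--Schwarz in the $m$th index, exchange of the sums over $k<n$ using $\alpha\varrho_m>1$, the support count $s(\nu)\alpha^{2(s(\nu)-1)}$, and passage to the limit from finite Legendre expansions. The only difference is your geometric weighting $\alpha^n$ in Cauchy--Schwarz. The paper uses the unweighted split, which in your normalized variables is exactly the Hilbert--Schmidt bound $\sum_{n}\sum_{k<n}K_m(k,n)^2\leq\varrho_m^{-2}\sum_{n\geq1}n(2n+1)^2\alpha^{2(n-1)}$. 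So your closing worry is unfounded: since $K_m$ acts only on the $m$th index, the factor $\alpha^{2\sum_{j\neq m}\nu_j}$ stays attached to each $\abs{q_\nu}^2$ under that bound as well, and no localization in $n$ is needed.
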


\begin{proof}
    Since $\alpha>\underline{\varrho}^{-1}$, the choice
    $\zeta_m=\alpha\varrho_m$ gives
    \begin{equation}
        1<\zeta_m<\varrho_m,
        \qquad
        m\geq 1.
    \end{equation}
    We use the differentiation formula for the normalized Legendre basis,
    \begin{equation}
        \label{eq:normalized-legendre-derivative-formula}
        L_n'
        =
        \sum_{\substack{0\leq k<n\\ n-k\ \mathrm{odd}}}
        \sqrt{(2n+1)(2k+1)}\,L_k .
    \end{equation}
    Fix $m\geq 1$ and write a multi-index as $(\eta,n)$, where $n$ denotes the
    $m$-th component and $\eta$ collects all remaining components.
    We use the subscript $\neq m$ for sequences with their $m$-th component
    omitted.
    Thus
    \begin{equation}
        p(\yb)
        =
        \sum_{\eta}
        \sum_{n\geq 0}
        p_{\eta,n}
        L_\eta(\yb_{\neq m})L_n(y_m).
    \end{equation}
    By \eqref{eq:normalized-legendre-derivative-formula}, the coefficient of
    $L_\eta(\yb_{\neq m})L_k(y_m)$ in $\partial_m p$ is
    \begin{equation}
        q_{\eta,k}
        =
        \sum_{\substack{n>k\\ n-k\ \mathrm{odd}}}
        \sqrt{(2n+1)(2k+1)}\,p_{\eta,n}.
    \end{equation}
    {Since
    $\omega_{(\eta,k)}^2=\omega_\eta^2(2k+1)$ and
    $\boldsymbol\zeta^{2(\eta,k)}
        =\boldsymbol\zeta_{\neq m}^{2\eta}\zeta_m^{2k}$,}
    \begin{equation*}
        {
            \begin{aligned}
                \frac{\boldsymbol\zeta^{2(\eta,k)}}
                {\omega_{(\eta,k)}^2}
                \abs{q_{\eta,k}}^2
                 & =
                \frac{\boldsymbol\zeta_{\neq m}^{2\eta}}{\omega_\eta^2}
                \frac{\zeta_m^{2k}}{2k+1}
                \left|
                \sum_{\substack{n>k \\ n-k\ \mathrm{odd}}}
                \sqrt{(2n+1)(2k+1)}\,p_{\eta,n}
                \right|^2
                \\
                 & =
                \frac{\boldsymbol\zeta_{\neq m}^{2\eta}}{\omega_\eta^2}
                \zeta_m^{2k}
                \left|
                \sum_{\substack{n>k \\ n-k\ \mathrm{odd}}}
                \sqrt{2n+1}\,p_{\eta,n}
                \right|^2.
            \end{aligned}}
    \end{equation*}
    {Summing over $\eta$ and $k$ gives}
    \begin{equation}
        \begin{aligned}
            \norm{\partial_m p}_{\mathcal A_{\boldsymbol\zeta}}^2
             & =
            \sum_{\eta}
            \frac{\boldsymbol\zeta_{\neq m}^{2\eta}}{\omega_\eta^2}
            \sum_{k\geq 0}
            \zeta_m^{2k}
            \left|
            \sum_{\substack{n>k \\ n-k\ \mathrm{odd}}}
            \sqrt{2n+1}\,p_{\eta,n}
            \right|^2 .
        \end{aligned}
    \end{equation}
    {For each fixed $\eta$, applying weighted Cauchy--Schwarz for
    every $k$ and then interchanging the order of summation gives}
    \begin{equation}
        {
            \begin{aligned}
                 & \sum_{k\geq0}
                \zeta_m^{2k}
                \left|
                \sum_{\substack{n>k \\ n-k\ \mathrm{odd}}}
                \sqrt{2n+1}\,p_{\eta,n}
                \right|^2
                \\
                 & \quad\leq
                \sum_{k\geq0}
                \left(
                \sum_{\substack{n>k \\ n-k\ \mathrm{odd}}}
                \zeta_m^{2k}(2n+1)^2\varrho_m^{-2n}
                \right)
                \left(
                \sum_{\substack{n>k \\ n-k\ \mathrm{odd}}}
                \frac{\varrho_m^{2n}}{2n+1}
                \abs{p_{\eta,n}}^2
                \right)
                \\
                 & \quad\leq
                \left(
                \sum_{n\geq1}
                    (2n+1)^2\varrho_m^{-2n}
                \sum_{k=0}^{n-1}\zeta_m^{2k}
                \right)
                \left(
                \sum_{n\geq1}
                \frac{\varrho_m^{2n}}{2n+1}
                \abs{p_{\eta,n}}^2
                \right).
            \end{aligned}}
    \end{equation}
    Since $\zeta_m=\alpha\varrho_m$ and
    $\alpha\varrho_m\geq\alpha\underline{\varrho}>1$, we have
    \begin{equation}
        \sum_{k=0}^{n-1}\zeta_m^{2k}
        =
        \sum_{k=0}^{n-1}\alpha^{2k}\varrho_m^{2k}
        \leq
        n\alpha^{2(n-1)}\varrho_m^{2(n-1)}.
    \end{equation}
    Hence
    \begin{equation}
        \sum_{n\geq1}
        (2n+1)^2\varrho_m^{-2n}
        \sum_{k=0}^{n-1}\zeta_m^{2k}
        \leq
        \varrho_m^{-2}
        \sum_{n\geq 1}
        n(2n+1)^2\alpha^{2(n-1)}
        =:
        c_\alpha\varrho_m^{-2}.
    \end{equation}
    Since $\alpha<1$, the constant $c_\alpha$ is finite and depends only on
    $\alpha$.
    Combining this estimate with the norm identity above and using
    \begin{equation*}
        \boldsymbol\zeta_{\neq m}^{2\eta}
        =
        \alpha^{2\norm{\eta}_{\ell^1}}
        \boldsymbol\varrho_{\neq m}^{2\eta}
    \end{equation*}
    therefore gives
    \begin{equation}
        \label{eq:uniform-legendre-derivative-retained-bound}
        \norm{\partial_m p}_{\mathcal A_{\boldsymbol\zeta}}^2
        \leq
        c_\alpha\varrho_m^{-2}
        \sum_{\substack{\nu\in\mathcal F\\ \nu_m\geq1}}
        \alpha^{2\sum_{j\neq m}\nu_j}
        \frac{\boldsymbol\varrho^{2\nu}}{\omega_\nu^2}
        \abs{p_\nu}^2.
    \end{equation}
    Dropping the factor involving $\alpha$ proves
    \eqref{eq:uniform-legendre-derivative-component-bound}.

    To prove \eqref{eq:uniform-legendre-derivative-weighted-sum-bound}, let
    $s(\nu)$ denote the number of nonzero components of $\nu$.
    If $\nu_m\geq1$, then
    \begin{equation}
        \sum_{j\neq m}\nu_j
        \geq
        s(\nu)-1.
    \end{equation}
    Hence
    \begin{equation}
        \sum_{\substack{m\geq1\\ \nu_m\geq1}}
        \alpha^{2\sum_{j\neq m}\nu_j}
        \leq
        s(\nu)\alpha^{2(s(\nu)-1)}
        \leq
        \sup_{s\geq1}s\alpha^{2(s-1)}.
    \end{equation}
    Multiplying \eqref{eq:uniform-legendre-derivative-retained-bound} by
    $\varrho_m^2$, summing over $m$, and using the preceding bound gives
    \eqref{eq:uniform-legendre-derivative-weighted-sum-bound}.

    The calculation above applies first to finite Legendre expansions.
    Since $(\varrho_m^{-1})_{m\geq1}$ and
    $(\zeta_m^{-1})_{m\geq1}$ belong to $\ell^2$, both weighted spaces embed
    continuously into $L^2_\mu(\Gamma)$.
    Passing to the limit in finite Legendre partial sums proves the estimates
    for general $p\in\mathcal A_{\boldsymbol\varrho}$ and identifies the limits
    with the weak coordinate derivatives.
\end{proof}

We now combine the Legendre coefficient estimate with the abstract RKHS
consequences from Section~\ref{sec:sample-complexity-smoothness}.

\begin{theorem}[Smoothness consequences for the affine uniform elliptic model]
    \label{thm:uniform-pde-smoothness-consequences}
    Assume \eqref{eq:uniform-weighted-ellipticity} for a sequence
    $\boldsymbol\varrho$ with $\varrho_m>1$.
    Let $\theta\in(0,1]$ satisfy
    \begin{equation}
        \label{eq:uniform-smoothness-exponent-assumption}
        (\varrho_m^{-1})_{m\geq 1}\in\ell^{2\theta}.
    \end{equation}
    Then $\underline{\varrho}:=\inf_m\varrho_m>1$.
    Choose $\alpha\in(\underline{\varrho}^{-1},1)$ and set
    $\zeta_m=\alpha\varrho_m$.
    Let $u(\yb)$ solve \eqref{eq:parametric-elliptic-weak-form} with the
    affine coefficient \eqref{eq:uniform-coefficient}, let $Q\in\calV'$,
    and set $f=Q\circ u$ as in \eqref{eq:pde-quantity-of-interest}.
    Set
    \begin{equation}
        \label{eq:uniform-rkhs-choice}
        \mathcal H
        :=
        \mathcal A_{\boldsymbol\zeta}.
    \end{equation}
    Lemma~\ref{lem:uniform-legendre-admissible-scales} ensures that
    $\mathcal H$ is an RKHS and that $\theta$ is an admissible exponent.
    Set $\beta:=\theta/(1+\theta)$.
    Then the following statements hold.

    \begin{enumerate}[label=(\roman*)]
        \item The target and its coordinate derivatives satisfy
              \begin{equation}
                  \label{eq:uniform-target-and-gradient-regularity}
                  f\in\mathcal A_{\boldsymbol\varrho},
                  \qquad
                  \partial_m f
                  \in
                  \mathcal A_{\boldsymbol\zeta},
                  \quad m\geq 1.
              \end{equation}
              The coordinate derivatives are jointly summable in $\mathcal H$:
              \begin{equation*}
                  B_f
                  :=
                  \sum_{m\geq1}
                  \varrho_m^2
                  \norm{\partial_m f}_{\mathcal H}^2
                  <
                  \infty.
              \end{equation*}
              In particular, $f\in H^1_\mu(\Gamma)$ and
              $\nabla f=(\partial_m f)_{m\geq1}$ belongs to
              $L^2_\mu(\Gamma;\ell^2(d))$.

        \item Define the scalar-projection operator
              \begin{equation}
                  \label{eq:uniform-scalar-projection-operator}
                  G:\ell^2(d)\to\mathcal H,
                  \qquad
                  Gv
                  :=
                  \sum_{m\geq1}v_m\partial_m f.
              \end{equation}
              Then $G$ is well defined and bounded, and the smoothness
              covariance satisfies
              \begin{equation}
                  \label{eq:uniform-schatten-smoothness}
                  H
                  =
                  G^*G
                  \in
                  \calS_\beta(\ell^2(d)),
                  \quad
                  \norm{H}_{\calS_\beta(\ell^2(d))}
                  \leq
                  B_f
                  \left(
                  \sum_{m\geq1}
                  \varrho_m^{-2\theta}
                  \right)^{1/\theta}.
              \end{equation}

        \item The {regularized} kernel diagonal measure
              $\rho_{\alpha_\lambda}$ from
              \eqref{eq:smoothness-ridge-diagonal-proposal} is well defined for
              every $\lambda>0$ and satisfies
              \begin{equation}
                  \label{eq:uniform-ridge-diagonal-coherence-bound}
                  K_{\rho_{\alpha_\lambda},\lambda}(\nabla f)
                  \leq
                  d_{\mathrm{eff}}(\alpha_\lambda,\gamma)
                  \leq
                  Z_{\theta,\boldsymbol\zeta}
                  \norm{H}_{\mathcal L(\ell^2(d))}^{\theta}
                  \lambda^{-\theta}.
              \end{equation}
              The $\theta$-kernel diagonal measure $\rho^\theta$ from
              \eqref{eq:smoothness-diagonal-proposal} satisfies, for every
              $\lambda>0$,
              \begin{equation}
                  \label{eq:uniform-diagonal-ridge-coherence-bound}
                  K_{\rho^\theta,\lambda}(\nabla f)
                  \leq
                  Z_{\theta,\boldsymbol\zeta}
                  \norm{H}_{\mathcal L(\ell^2(d))}^{\theta}
                  \lambda^{-\theta}.
              \end{equation}
              Moreover, the base uniform measure is quasi-optimal for the {regularized}
              kernel diagonal {at scale $\alpha_\lambda$}. More precisely, for
              every $\lambda>0$,
              \begin{equation}
                  \label{eq:uniform-base-ridge-coherence-bound}
                  K_{\mu,\lambda}(\nabla f)
                  \leq
                  \norm{
                      \kappa_{\boldsymbol\zeta}^\theta
                  }_{L^\infty(\Gamma)}
                  \norm{H}_{\mathcal L(\ell^2(d))}^{\theta}
                  \lambda^{-\theta}.
              \end{equation}

        \item The active subspace tail energy satisfies, for every $r\geq 1$, the a priori bound
              \begin{equation}
                  \label{eq:uniform-active-subspace-tail-bound}
                  \err(\Pi_r)^2
                  \leq
                  Z_{\theta,\boldsymbol\zeta}^{1/\theta}
                  \norm{H}_{\calS_\beta(\ell^2(d))}
                  r^{-2/\theta}.
              \end{equation}
    \end{enumerate}

\end{theorem}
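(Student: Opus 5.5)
The proof will follow the lognormal proof of Theorem~\ref{thm:lognormal-pde-smoothness-consequences} almost step by step, with Lemma~\ref{lem:uniform-legendre-alpha-radius-loss} in place of the Hermite lemma. Two preliminary facts come first.

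First, since $(\varrho_m^{-1})\in\ell^{2\theta}$, we have $\varrho_m\to\infty$. Because every $\varrho_m>1$, the infimum is attained at some index or approached along a sequence bounded away from infinity, so $\underline\varrho>1$; hence the interval $(\underline\varrho^{-1},1)$ is nonempty. Second, $\ell^{2\theta}\subset\ell^2$ for $\theta\le1$, and $\zeta_m^{-1}=\alpha^{-1}\varrho_m^{-1}$. Therefore $(\zeta_m^{-1})$ lies in both $\ell^2$ and $\ell^{2\theta}$, and $\zeta_m=\alpha\varrho_m\ge\alpha\underline\varrho>1$. Lemma~\ref{lem:uniform-legendre-admissible-scales} then gives that $\mathcal H=\mathcal A_{\boldsymbol\zeta}$ is an RKHS, that $\theta$ is admissible, and that $\norm{\kappa^\theta_{\boldsymbol\zeta}}_{L^\infty}<\infty$.

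For (i), the Legendre estimate \eqref{eq:uniform-solution-legendre-regularity}, combined with $\abs{Q(u_\nu)}\le\norm{Q}_{\calV'}\norm{u_\nu}_\calV$, gives $f\in\mathcal A_{\boldsymbol\varrho}$ with $\norm{f}^2_{\mathcal A_{\boldsymbol\varrho}}\le\norm{Q}^2C_{\mathrm{UE}}$. Applying Lemma~\ref{lem:uniform-legendre-alpha-radius-loss} with $p=f$ yields $\partial_mf\in\mathcal A_{\boldsymbol\zeta}$ and $B_f\le C_{\mathrm{unif}}\norm{f}^2_{\mathcal A_{\boldsymbol\varrho}}$. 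Since the weights satisfy $\boldsymbol\zeta^{2\nu}/\omega_\nu^2\ge$ some uniform positive bound, I need $\norm{p}_{L^2_\mu}\le c\norm{p}_{\mathcal A_{\boldsymbol\zeta}}$. This does not follow termwise, because $\omega_\nu$ grows. It does follow from the continuous embedding stated at the end of that lemma's proof: $\mathcal A_{\boldsymbol\zeta}$ has integrable kernel diagonal, so the embedding is Hilbert--Schmidt and in particular bounded. Together with $\varrho_m\ge\underline\varrho$ this gives $\sum_m\norm{\partial_mf}^2_{L^2_\mu}\lesssim\underline\varrho^{-2}B_f<\infty$. The coordinate characterization \eqref{eq:coordinate-weak-derivative-characterization} for the uniform product measure then gives $f\in H^1_\mu$ and $\nabla f=(\partial_mf)_m$.

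For (ii), I repeat the lognormal factorization $G=G_{\boldsymbol\varrho}R^{-1}$, where $G_{\boldsymbol\varrho}e_m=\varrho_m\partial_mf$ is Hilbert--Schmidt with $\norm{G_{\boldsymbol\varrho}}^2_{\calS_2}=B_f$ and $R^{-1}\in\calS_{2\theta}$. Schatten--H\"older with $1/(2\beta)=1/2+1/(2\theta)$ gives \eqref{eq:uniform-schatten-smoothness}. This also shows that $G$ is bounded and that Assumption~\ref{ass:smoothness-covariance-bounded} holds with $H=G^*G$, because $p_v=Gv$ by linearity and continuity. Part (iv) then follows from Proposition~\ref{prop:smoothness-active-subspace-tail}, since $q_{\theta,\beta}=1/\theta+(1+\theta)/\theta-1=2/\theta$.

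For (iii), the constant mode $L_0=1$ makes $\kappa_{\alpha}(\yb)>0$, and $\kappa_\alpha\le\alpha^{-\theta}\kappa^\theta$ makes it finite. Hence $\rho_{\alpha_\lambda}\sim\mu$, and Proposition~\ref{prop:smoothness-ridge-diagonal-sampling} with $c_2=1$ together with \eqref{eq:smoothness-ridge-to-theta-effective-dimension} gives \eqref{eq:uniform-ridge-diagonal-coherence-bound}. Corollary~\ref{prop:smoothness-diagonal-sampling} gives \eqref{eq:uniform-diagonal-ridge-coherence-bound}. For the base measure I take $w=1$ in Lemma~\ref{prop:smoothness-ridge-coherence}, which gives $K_{\mu,\lambda}\le\sup_\yb\kappa_{\alpha_\lambda}(\yb)$. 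Then \eqref{eq:smoothness-ridge-theta-dimension-comparison} bounds this by $\alpha_\lambda^{-\theta}\norm{\kappa^\theta_{\boldsymbol\zeta}}_{L^\infty}=\norm{\kappa^\theta_{\boldsymbol\zeta}}_{L^\infty}\norm H^\theta\lambda^{-\theta}$. The quasi-optimality statement is this same bound, read with constant $c_2=\alpha_\lambda^{-\theta}\norm{\kappa^\theta_{\boldsymbol\zeta}}_\infty/d_{\mathrm{eff}}(\alpha_\lambda,\gamma)$.

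The main technical point I expect is in (i). It consists of justifying the $L^2$ bound on the gradient through the continuous embedding rather than termwise, and of checking that the compatible pointwise representative $g=G^*\kappa(\cdot,\yb)$ agrees with the weak gradient. The rest is bookkeeping.
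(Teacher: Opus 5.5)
Your proposal is correct and follows the paper's proof essentially step by step. The shared chain is: $f\in\mathcal A_{\boldsymbol\varrho}$ from the Legendre estimate; $B_f<\infty$ from Lemma~\ref{lem:uniform-legendre-alpha-radius-loss} together with the bounded embedding of $\mathcal H$ into $L^2_\mu$; the factorization $G=G_{\boldsymbol\varrho}R^{-1}$ with Schatten--H\"older; and then the abstract coherence and tail results. Your direct use of Lemma~\ref{prop:smoothness-ridge-coherence} with $w=1$ for the base measure is the same argument the paper phrases via quasi-optimality.

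One harmless inaccuracy: the bound $\norm{p}_{L^2_\mu(\Gamma)}\le c\norm{p}_{\mathcal A_{\boldsymbol\zeta}}$ does follow termwise. Since $\zeta_m\to\infty$, we have $\zeta_m^{2n}\ge 3^n\ge 2n+1$ for all but finitely many $m$, so $\inf_\nu\boldsymbol\zeta^{2\nu}/\omega_\nu^2>0$, which is exactly what boundedness of the embedding means. Your Hilbert--Schmidt argument is also valid.
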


\begin{proof}
    Since $\zeta_m=\alpha\varrho_m$, condition
    \eqref{eq:uniform-smoothness-exponent-assumption} is equivalent to
    $(\zeta_m^{-1})_{m\geq 1}\in\ell^{2\theta}$.
    Lemma~\ref{lem:uniform-legendre-admissible-scales} therefore gives the
    asserted RKHS and admissibility properties.

    The Legendre coefficient estimate
    \eqref{eq:uniform-solution-legendre-regularity} gives
    \begin{equation}
        \sum_{\nu\in\mathcal F}
        \frac{\boldsymbol\varrho^{2\nu}}{\omega_\nu^2}
        \norm{u_\nu}_{\calV}^2
        \leq
        C_{\mathrm{UE}}.
    \end{equation}
    Since $Q\in\calV'$ is bounded and linear, the Legendre coefficients of
    $f=Q\circ u$ are $f_\nu=Q(u_\nu)$.
    Hence
    \begin{equation}
        \label{eq:uniform-target-rkhs-bound}
        \norm{f}_{\mathcal A_{\boldsymbol\varrho}}^2
        =
        \sum_{\nu\in\mathcal F}
        \frac{\boldsymbol\varrho^{2\nu}}{\omega_\nu^2}
        \abs{f_\nu}^2
        \leq
        \norm{Q}_{\calV'}^2 C_{\mathrm{UE}}
        <
        \infty.
    \end{equation}
    This proves $f\in\mathcal A_{\boldsymbol\varrho}$.

    Since $\zeta_m=\alpha\varrho_m$ with
    $\alpha\in(\underline{\varrho}^{-1},1)$, we have
    $1<\zeta_m<\varrho_m$.
    Applying Lemma~\ref{lem:uniform-legendre-alpha-radius-loss} with $p=f$ gives
    \begin{equation}
        \label{eq:uniform-gradient-weighted-summability}
        B_f
        =
        \sum_{m\geq1}
        \varrho_m^2
        \norm{\partial_m f}_{\mathcal H}^2
        \leq
        C_{\mathrm{unif}}
        \norm{f}_{\mathcal A_{\boldsymbol\varrho}}^2.
    \end{equation}
    Thus $B_f<\infty$.
    The embedding of $\mathcal H$ into $L^2_\mu(\Gamma)$ is bounded, and
    $\varrho_m\geq\underline{\varrho}$.
    Hence \eqref{eq:uniform-gradient-weighted-summability} implies
    \begin{equation*}
        \sum_{m\geq1}
        \norm{\partial_m f}_{L^2_\mu(\Gamma)}^2
        <\infty.
    \end{equation*}
    The coordinate characterization
    \eqref{eq:coordinate-weak-derivative-characterization} shows that
    $f\in H^1_\mu(\Gamma)$ and identifies its gradient with
    $(\partial_m f)_{m\geq1}$.

    The factorization used in the proof of
    Theorem~\ref{thm:lognormal-pde-smoothness-consequences} now applies with the
    present RKHS and weights.
    Namely, the operator with columns
    $\varrho_m\partial_m f$ is Hilbert--Schmidt by
    \eqref{eq:uniform-gradient-weighted-summability}, while the diagonal
    operator with entries $\varrho_m^{-1}$ belongs to $\calS_{2\theta}$.
    Their product is the operator $G$ in
    \eqref{eq:uniform-scalar-projection-operator}.
    Thus $G$ is well defined, $H=G^*G$, and the Schatten--H\"older inequality
    gives \eqref{eq:uniform-schatten-smoothness}.

    The constant Legendre mode and
    \eqref{eq:smoothness-ridge-theta-dimension-comparison} show that the {regularized}
    kernel diagonal measure is well defined and equivalent to $\mu$.
    Proposition~\ref{prop:smoothness-ridge-diagonal-sampling} and
    \eqref{eq:smoothness-ridge-to-theta-effective-dimension} give
    \eqref{eq:uniform-ridge-diagonal-coherence-bound}.
    Admissibility gives finiteness of $\kappa^\theta$ $\mu$-a.e., while the
    positive constant Legendre mode makes it strictly positive.
    Hence $\rho^\theta\sim\mu$, and the same corollary gives
    \eqref{eq:uniform-diagonal-ridge-coherence-bound}.

    The {pointwise comparison
            \eqref{eq:smoothness-ridge-theta-dimension-comparison} and}
    \eqref{eq:uniform-legendre-diagonal-bound} {give}
    \begin{equation*}
        {\mu\text{-}\operatorname*{ess\,sup}_{\yb\in\Gamma}
            \kappa_{\alpha_\lambda}(\yb)
            \leq
            \alpha_\lambda^{-\theta}
            \norm{\kappa_{\boldsymbol\zeta}^\theta}_{L^\infty(\Gamma)}.}
    \end{equation*}
    {Thus} $\mu$ is quasi-optimal for the {regularized} kernel diagonal {at scale
            $\alpha_\lambda$} with constant
    \begin{equation*}
        c_2
        =
        \frac{
            {\alpha_\lambda^{-\theta}}
            \norm{\kappa_{\boldsymbol\zeta}^\theta}_{L^\infty(\Gamma)}
        }{
            {d_{\mathrm{eff}}(\alpha_\lambda},{\gamma)}
        }.
    \end{equation*}
    {This value is at least one because the denominator is the $\mu$-integral
    of $\kappa_{\alpha_\lambda}$.}
    Applying {Proposition}~{\ref{prop:smoothness-ridge-diagonal-sampling}} with
    this value
    gives \eqref{eq:uniform-base-ridge-coherence-bound}.

    Finally, Proposition~\ref{prop:smoothness-active-subspace-tail} gives
    \eqref{eq:uniform-active-subspace-tail-bound}, since
    $q_{\theta,\beta}=2/\theta$.
\end{proof}

As in the lognormal model, the condition
$(\varrho_m^{-1})_{m\geq 1}\in\ell^{2\theta}$ gives kernel spectral
summability, which enters both the coherence and tail estimates.
Together with the weighted derivative estimate
\eqref{eq:uniform-gradient-weighted-summability}, it also gives the
coordinate-index summability $H\in\calS_\beta$, which enters only the tail
estimate.
Here $\beta=\theta/(1+\theta)$, so the resulting {regularized coherence} growth is
$\lambda^{-\theta}$ and the active subspace tail energy decay is
$r^{-2/\theta}$.
Consequently, the choices of $r_\tol$ and $\lambda_\tol$ in
\eqref{eq:lognormal-informal-tolerance-rates} also apply here, and the leading
algebraic power of $\tol^{-1}$ in the sufficient sample bound is
$2\theta+\theta^2$.

Recall, however, that we cannot convert this active subspace tail energy bound into a
ridge-reconstruction error bound.
Establishing the required subspace conditional Poincar\'e inequality for
uniform product measures is delicate and beyond the scope of this work; see
Remark~\ref{rem:uniform-product-poincare}.

The affine uniform model offers an additional certified sampling option.
The uniform bound on the normalized Legendre basis makes the base measure
quasi-optimal for the {regularized} kernel diagonal {at every regularization scale}, so the
$\lambda^{-\theta}$ coherence rate does not require a change of measure.
Sampling from $\rho^\theta$ replaces the base-measure constant
$\norm{\kappa_{\boldsymbol\zeta}^\theta}_{L^\infty(\Gamma)}$ by
$Z_{\theta,\boldsymbol\zeta}$, while the {regularized} kernel diagonal measure retains
the sharper kernel effective-dimension bound in
\eqref{eq:uniform-ridge-diagonal-coherence-bound}.

Under the hypotheses of Theorem~\ref{thm:uniform-pde-smoothness-consequences},
uniform ellipticity also gives a direct bounded-gradient estimate under the
base measure.
Differentiating the weak problem with respect to $y_m$ gives
\begin{equation*}
    \int_D
    a(x,\yb)\nabla\partial_m u(x,\yb)\cdot\nabla v(x)
    \,\dif x
    =
    -
    \int_D
    \psi_m(x)\nabla u(x,\yb)\cdot\nabla v(x)
    \,\dif x,
    \qquad
    v\in\calV.
\end{equation*}
The ellipticity bounds \eqref{eq:uniform-ellipticity-bounds}, the energy
estimate for $u$, and
$\varrho_m\abs{\psi_m}\leq\delta_{\boldsymbol\varrho}\bar a$ imply
\begin{equation*}
    \norm{\partial_m u(\yb)}_{\calV}
    \lesssim
    \varrho_m^{-1}
\end{equation*}
uniformly in $\yb$, with a constant independent of $m$.
Since $Q\in\calV'$, it follows that
\begin{equation*}
    \abs{\partial_m f(\yb)}
    \lesssim
    \varrho_m^{-1}
\end{equation*}
with the same uniformity.
Since $(\varrho_m^{-1})_{m\geq1}\in\ell^{2\theta}\subseteq\ell^2$, it follows
that
\begin{equation*}
    \mu\text{-}\operatorname*{ess\,sup}_{\yb\in\Gamma}
    \norm{\nabla f(\yb)}_{\ell^2(d)}^2
    \lesssim
    \sum_{m\geq1}\varrho_m^{-2}
    <
    \infty.
\end{equation*}
Thus, the bounded-gradient estimate \eqref{eq:Klambda-bounded-gradient} gives
the baseline dependence $K_{\mu,\lambda}(\nabla f)\lesssim\lambda^{-1}$, while the kernel diagonal estimate
\eqref{eq:uniform-base-ridge-coherence-bound} improves this dependence to
$\lambda^{-\theta}$ when $\theta<1$.

\section{Numerical illustrations}
\label{sec:numerical-experiments}

\newcommand{\hermitejointmarginalsfigure}{figures/hermite_joint_marginals_coherent_packet.pdf}

\subsection{A rare-activation Hermite model}
\label{subsec:numerics-hermite-rare-activation}
{
In this section, we construct a model with a simple covariance spectrum in which selected
gradient coordinates are difficult to resolve from Gaussian samples.
The example aims to show how the sampling measure can affect active subspace recovery.

Let $\mu=\mathcal N(0,1)^{\otimes\mathbb N}$, and let $H_n$ denote the
$L^2_{\mu_1}(\R)$-normalized Hermite polynomial of degree $n$.
We use the weighted Hermite RKHS
$\mathcal A_{\boldsymbol\varrho}^{(\mathfrak r)}$ from
Section~\ref{subsec:lognormal-model}.
Here we set $\mathfrak r=2$ and
$\varrho_j=j$.
Thus, for $p=\sum_{\nu\in\mathcal F}p_\nu H_\nu$,
\begin{equation*}
    \norm{p}_{\mathcal A_{\boldsymbol\varrho}^{(2)}}^2
    =
    \sum_{\nu\in\mathcal F}
    b_{\nu,2}(\boldsymbol\varrho)\abs{p_\nu}^2,
    \qquad
    b_{\nu,2}(\boldsymbol\varrho)
    =
    \prod_{j\geq1}
    \left(
    1+\nu_j j^2+\binom{\nu_j}{2}j^4
    \right).
\end{equation*}
The embedding eigenvalues are
$\gamma_\nu=b_{\nu,2}(\boldsymbol\varrho)^{-1}$.
For a univariate mode of degree $n$ in coordinate $j$, write
\begin{equation*}
    b_{n,2}(j)
    :=
    1+n j^2+\binom{n}{2}j^4.
\end{equation*}
For $j\geq1$ and $n\geq0$, set
\begin{equation*}
    a_{j,n}
    :=
    \exp(-j/2)\frac{j^{n/2}}{\sqrt{n!}}.
\end{equation*}
The Hermite generating function gives
\begin{equation*}
    \exp(\sqrt j\,y-j)
    =
    \sum_{n\geq0}a_{j,n}H_n(y),
    \qquad
    a_{j,n}^2
    =
    e^{-j}\frac{j^n}{n!}.
\end{equation*}
Thus, $a_{j,n}^2$ is the probability mass function of a random variable
$N\sim\operatorname{Poisson}(j)$.  In particular, for every function $\psi$
for which the expectation is finite,
\begin{equation}
    \label{eq:numerics-hermite-poisson-expectation}
    \sum_{n\geq0}a_{j,n}^2\psi(n)
    =
    \mathbb E[\psi(N)].
\end{equation}
Moreover, $\mathbb E[N]=j$ and
$\mathbb E[N(N-1)]=j^2$.
We now define the normalized function obtained by removing the
$H_0$, $H_1$, and $H_{2j}$ components from this expansion:
\begin{equation*}
    Q_j(y)
    :=
    \frac{
    \exp(\sqrt{j}\,y-j)
    -a_{j,0}H_0(y)
    -a_{j,1}H_1(y)
    -a_{j,2j}H_{2j}(y)
    }{
    \left(1-a_{j,0}^2-a_{j,1}^2-a_{j,2j}^2\right)^{1/2}
    }.
\end{equation*}
By construction,
\begin{equation*}
    \norm{Q_j}_{L^2_{\mu_1}}=1,
    \qquad
    \inp{Q_j}{H_n}_{L^2_{\mu_1}}=0,
    \quad
    n\in\{0,1,2j\}.
\end{equation*}
Let
\begin{equation*}
    \eta_j
    :=
    \begin{cases}
        1-\left(\dfrac{j}{j+1}\right)^8,
         & j\geq3\text{ odd}, \\
        0,
         & \text{otherwise},
    \end{cases}.
\end{equation*}
We define the centered univariate component
\begin{equation*}
    f_j(y)
    :=
    \sqrt{1-\eta_j}\,
    \frac{H_{2j+1}(y)}{\sqrt{2j+1}}
    +
    \frac{\sqrt{\eta_j}}{
        \left(1-a_{j,0}^2-a_{j,1}^2-a_{j,2j}^2\right)^{1/2}}
    \sum_{n\notin\{0,1,2j\}}
    a_{j,n}\frac{H_{n+1}(y)}{\sqrt{n+1}}.
\end{equation*}
The Hermite derivative identity and the definition of $Q_j$ give
\begin{equation*}
    f_j'(y)
    =
    \sqrt{1-\eta_j}\,H_{2j}(y)
    +
    \sqrt{\eta_j}\,Q_j(y),
    \qquad
    \norm{f_j'}_{L^2_{\mu_1}}=1.
\end{equation*}
We now define an additive model by
\begin{equation*}
    f(\yb)
    =
    \sum_{j\geq1}j^{-3}f_j(y_j),
\end{equation*}
to which we want to apply the active subspace method.
Since each $f_j'$ has zero mean and unit $L^2_{\mu_1}$-norm,
\begin{equation*}
    \norm{j^{-3}f_j'}_{L^2_{\mu_1}}^2
    =
    j^{-6}.
\end{equation*}
The gradient covariance is therefore diagonal:
\begin{equation}
    \label{eq:numerics-hermite-model}
    g_j(\yb)
    =
    j^{-3}f_j'(y_j),
    \qquad
    C
    =
    \operatorname{diag}(j^{-6})_{j\geq1}.
\end{equation}
Consequently,
\begin{equation}
    \label{eq:numerics-hermite-canonical-scale}
    \err(\Pi_r)^2
    =
    \sum_{j>r}j^{-6}
    \sim
    \frac{1}{5r^5},
    \qquad
    \lambda_r
    :=
    \frac{\err(\Pi_r)^2}{r}
    \sim
    \frac{1}{5r^6},
\end{equation}
where $\lambda_r$ is the rank-$r$ canonical regularization scale.

The choice of $\eta_j$ creates rare activation at odd target ranks.
At an odd target rank $r$, the $H_{2r}$ component of the last retained
gradient coordinate satisfies
\begin{equation*}
    \norm{
    r^{-3}\sqrt{1-\eta_r}\,H_{2r}
    }_{L^2_{\mu_1}}^2
    =
    (1-\eta_r)r^{-6}
    =
    \left(\frac{r}{r+1}\right)^2(r+1)^{-6}
    <
    \norm{
    (r+1)^{-3}H_{2(r+1)}
    }_{L^2_{\mu_1}}^2.
\end{equation*}
The function $Q_r$ is
concentrated in the Gaussian tails while retaining unit
$L^2_{\mu_1}$-norm.
Thus, if a finite sample misses $Q_r$, the observed energy in coordinate $r$
falls below the full energy of the adjacent excluded coordinate $r+1$, which
can reverse their empirical spectral ordering.

To specify the regularized kernel diagonal measure at the canonical scale $\lambda_r$,
we first compute the RKHS covariance norm, which determines
$\alpha_{\lambda_r}=\lambda_r/\norm{H}_{\mathcal L(\ell^2)}$.

\begin{lemma}[RKHS covariance of the Hermite model]
    The RKHS covariance $H$ is diagonal in the canonical basis of $\ell^2$,
    with diagonal entries
    \begin{equation*}
        h_j:=\norm{j^{-3}f_j'}_{\mathcal A_{\boldsymbol\varrho}^{(2)}}^2.
    \end{equation*}
    These satisfy $h_1=4$ and $h_j<2$ for every $j\geq2$.
    Consequently,
    \begin{equation}
        \label{eq:numerics-hermite-smoothness-scale}
        \norm{H}_{\mathcal L(\ell^2)}
        =
        4,
        \qquad
        \alpha_{\lambda_r}
        =
        \frac{\lambda_r}{4}.
    \end{equation}
\end{lemma}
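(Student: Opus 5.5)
The plan is to show that $H$ is diagonal by exploiting the additive, coordinate-separated structure of $f$. Then compute $h_1$ exactly, and bound $h_j$ for $j\geq2$ by splitting into even and odd $j$.

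\emph{Diagonal structure.} By Assumption~\ref{ass:smoothness-covariance-bounded}, $\inp{e_i}{He_j}_{\ell^2}=\inp{g_i}{g_j}_{\mathcal A^{(2)}_{\boldsymbol\varrho}}$ with $g_j(\yb)=j^{-3}f_j'(y_j)$.

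\begin{itemize}
\item The Hermite expansion of $g_j$ involves only multi-indices $\nu=n e_j$ supported on coordinate $j$.
\item The $n=0$ coefficient vanishes: $H_{2j}$ with $j\geq1$ is orthogonal to $H_0$, and $Q_j\perp H_0$ by construction.
\item For $i\neq j$ the two expansions therefore share no common multi-index.
\item The $\mathcal A^{(2)}_{\boldsymbol\varrho}$ inner product is diagonal in the Hermite basis, so $\inp{g_i}{g_j}_{\mathcal A^{(2)}_{\boldsymbol\varrho}}=0$.
\end{itemize}

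Hence $H=\operatorname{diag}(h_j)$ and $\norm{H}_{\mathcal L(\ell^2)}=\sup_j h_j$. For a univariate mode, the weight $b_{ne_j,2}(\boldsymbol\varrho)$ reduces to $b_{n,2}(j)$. Using the explicit coefficients of $f_j'$ gives
\begin{equation*}
h_j = j^{-6}\left[(1-\eta_j)\,b_{2j,2}(j) + \frac{\eta_j}{D_j}\sum_{n\notin\{0,1,2j\}} a_{j,n}^2\, b_{n,2}(j)\right],
\qquad D_j:=1-a_{j,0}^2-a_{j,1}^2-a_{j,2j}^2 .
\end{equation*}

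\emph{The cases $j=1$ and even $j$.} For $j=1$ we have $\eta_1=0$, so $h_1=b_{2,2}(1)=1+2+1=4$. For even $j\geq2$ we also have $\eta_j=0$, and
\begin{equation*}
h_j=j^{-6}\bigl(1+2j^3+j(2j-1)j^4\bigr)=2-j^{-1}+2j^{-3}+j^{-6}.
\end{equation*}
This is $<2$ exactly when $j^5>2j^3+1$, which holds for $j\geq2$ (for $j=2$: $32>17$).

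\emph{The case odd $j\geq3$.} Here $h_j$ is a convex combination, with weights $1-\eta_j$ and $\eta_j$, of two terms.

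\begin{itemize}
\item The first term, $j^{-6}b_{2j,2}(j)$, is $<2$ by the even-case computation.
\item For the second term, drop the exclusion of $n\in\{0,1,2j\}$; this only adds nonnegative terms. The Poisson identity \eqref{eq:numerics-hermite-poisson-expectation}, with $\mathbb E[N]=j$ and $\mathbb E[N(N-1)]=j^2$, gives
\begin{equation*}
\sum_{n\geq0}a_{j,n}^2 b_{n,2}(j)=1+j^3+\tfrac12 j^6 .
\end{equation*}
\item So the second term is at most $(\tfrac12+j^{-3}+j^{-6})/D_j$.
\end{itemize}

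The main point requiring care is a lower bound on $D_j$ uniform in odd $j\geq3$. The plan is as follows.

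\begin{itemize}
\item Bound $a_{j,0}^2+a_{j,1}^2=(1+j)e^{-j}$ by $4e^{-3}<0.2$, since $(1+j)e^{-j}$ is decreasing.
\item Bound $a_{j,2j}^2=e^{-j}j^{2j}/(2j)!$ by the maximal Poisson mass, or via Stirling, giving something below $0.1$.
\item Together these give $D_j>0.7$.
\end{itemize}

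The second term is then below $(\tfrac12+\tfrac1{27}+3^{-6})/0.7<1$. Hence $h_j<2$ for all odd $j\geq3$ as well.

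Combining the cases, $\sup_j h_j=h_1=4$. This gives $\norm{H}_{\mathcal L(\ell^2)}=4$, and by \eqref{eq:smoothness-ridge-alpha-lambda}, $\alpha_{\lambda_r}=\lambda_r/4$.
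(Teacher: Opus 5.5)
Your proposal is correct and follows the paper's proof essentially step by step. Diagonality comes from the disjoint univariate Hermite supports, and you rightly note that the shared zero multi-index drops out because each $f_j'$ has zero mean. The $\eta_j=0$ computation gives $h_1=4$ and $h_j=2-j^{-1}+2j^{-3}+j^{-6}<2$. For odd $j\geq3$, the Poisson identity together with a lower bound on $D_j$ writes $h_j$ as a convex combination of two quantities below $2$.

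The only difference is how you bound the removed Poisson masses. The ``maximal Poisson mass'' option does not give $a_{j,2j}^2<0.1$ at $j=3$, since the mode mass there is $\tfrac{27}{6}e^{-3}\approx0.22$. Your stated constants therefore need Stirling's bound with the $\sqrt{4\pi j}$ factor. The argument survives either way: the cruder route still gives $D_j>0.57$, and only $D_j>0.27$ is needed for $h_j<2$.
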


\begin{proof}
    Different gradient coordinates have disjoint tensorized Hermite expansions,
    so the operator is diagonal.
    Hence, for $v\in\ell^2$,
    \begin{equation*}
        \inp{v}{Hv}_{\ell^2}
        =
        \sum_{j\geq1}
        \norm{j^{-3}f_j'}_{\mathcal A_{\boldsymbol\varrho}^{(2)}}^2
        \abs{v_j}^2.
    \end{equation*}
    Since $Q_j$ is $\mathcal A_{\boldsymbol\varrho}^{(2)}$-orthogonal to $H_{2j}$, the squared RKHS norm of the
    $j$-th gradient coordinate is
    \begin{equation*}
        \norm{j^{-3}f_j'}_{\mathcal A_{\boldsymbol\varrho}^{(2)}}^2
        =
        j^{-6}
        \left(
        (1-\eta_j)b_{2j,2}(j)
        +
        \frac{\eta_j
        \displaystyle\sum_{n\notin\{0,1,2j\}}
        a_{j,n}^2b_{n,2}(j)
        }{
        1-a_{j,0}^2-a_{j,1}^2-a_{j,2j}^2
        }
        \right).
    \end{equation*}
    Applying \eqref{eq:numerics-hermite-poisson-expectation} with
    $\psi(n)=b_{n,2}(j)$ gives
    \begin{equation*}
        \sum_{n\geq0}a_{j,n}^2b_{n,2}(j)
        =
        \mathbb E\left[
            1+j^2N+\frac{j^4}{2}N(N-1)
            \right]
        =
        1+j^3+\frac{j^6}{2}.
    \end{equation*}
    If $\eta_j=0$, then
    \begin{equation*}
        \norm{j^{-3}f_j'}_{\mathcal A_{\boldsymbol\varrho}^{(2)}}^2
        =
        \norm{j^{-3}H_{2j}}_{\mathcal A_{\boldsymbol\varrho}^{(2)}}^2
        =
        j^{-6}b_{2j,2}(j)
        =
        2-\frac1j+\frac{2}{j^3}+\frac{1}{j^6}.
    \end{equation*}
    This expression equals $4$ at $j=1$ and is smaller than $2$ for $j\geq2$.
    For odd $j\geq3$, the three removed Poisson masses satisfy
    \begin{equation*}
        a_{j,0}^2+a_{j,1}^2+a_{j,2j}^2
        =
        e^{-j}\left(1+j+\frac{j^{2j}}{(2j)!}\right)
        <
        \frac14.
    \end{equation*}
    The inequality follows by direct evaluation at $j=3$ while, for $j\geq5$, the
    bound $(2j)!\geq(2j/e)^{2j}$ gives
    \begin{equation*}
        e^{-j}\left(1+j+\frac{j^{2j}}{(2j)!}\right)
        \leq
        6e^{-5}+\left(\frac e4\right)^5
        <
        \frac14.
    \end{equation*}
    Consequently,
    \begin{equation*}
        \norm{j^{-3}Q_j}_{\mathcal A_{\boldsymbol\varrho}^{(2)}}^2
        =
        j^{-6}
        \frac{
        \displaystyle\sum_{n\notin\{0,1,2j\}}
        a_{j,n}^2b_{n,2}(j)
        }{
        1-a_{j,0}^2-a_{j,1}^2-a_{j,2j}^2
        }
        \leq
        \frac{\frac12+j^{-3}+j^{-6}}{3/4}
        <
        1.
    \end{equation*}
    For these odd indices, Hermite orthogonality also gives
    \begin{equation*}
        \norm{j^{-3}f_j'}_{\mathcal A_{\boldsymbol\varrho}^{(2)}}^2
        =
        (1-\eta_j)
        \norm{j^{-3}H_{2j}}_{\mathcal A_{\boldsymbol\varrho}^{(2)}}^2
        +
        \eta_j
        \norm{j^{-3}Q_j}_{\mathcal A_{\boldsymbol\varrho}^{(2)}}^2
        <
        2.
    \end{equation*}
    Thus every diagonal entry of $H$ except the first is smaller than $2$, whereas
    the first diagonal entry equals $4$.
    This proves \eqref{eq:numerics-hermite-smoothness-scale}.
\end{proof}

We want to compare measures that use different information about the gradient.
For each measure, we estimate the growth of its regularized coherence at $\lambda_r$;
Theorem~\ref{thm:slas-ridge-quasi-opt} converts this estimate into a sufficient
sample-size requirement for active subspace recovery.
The regularized Christoffel measure uses the exact covariance eigenfunctions.
In this model, its density, normalized by the covariance effective dimension,
is explicit and given by
\begin{equation*}
    \frac{\dif\rho_{\lambda_r}^\star}{\dif\mu}(\yb)
    =
    \frac{1}{d_{\mathrm{eff}}(\lambda_r)}
    \sum_{j\geq1}
    \frac{j^{-6}}{j^{-6}+\lambda_r}
    f_j'(y_j)^2,
    \quad
    d_{\mathrm{eff}}(\lambda_r)
    =
    \sum_{j\geq1}
    \frac{j^{-6}}{j^{-6}+\lambda_r}
    \leq
    r
    +
    \lambda_r^{-1}\sum_{j>r}j^{-6}
    =
    2r,
\end{equation*}
and Theorem~\ref{thm:slas-ridge-quasi-opt} gives a sufficient sample
size of order $r \log(r)$ for this measure.

The regularized kernel diagonal measure uses the full Hermite space, including modes
that are absent from the gradient.
Its normalizing constant is the kernel effective dimension
\begin{equation*}
    d_{\mathrm{eff}}(\alpha,\gamma)
    =
    \sum_{\nu\in\mathcal F}
    \frac{\gamma_\nu}{\gamma_\nu+\alpha},
\end{equation*}
which measures how many RKHS modes remain significant at scale $\alpha$.
The univariate linear modes belong to this space and have eigenvalues
$\gamma_{e_j}=(1+j^2)^{-1}$.
Consequently, for every admissible $\theta>1/2$, with
$Z_\theta:=\sum_{\nu\in\mathcal F}\gamma_\nu^\theta<\infty$,
\begin{equation*}
    \sum_{j\geq1}
    \frac{(1+j^2)^{-1}}{(1+j^2)^{-1}+\alpha}
    \leq
    d_{\mathrm{eff}}(\alpha,\gamma)
    \leq
    Z_\theta\alpha^{-\theta}.
\end{equation*}
The sum on the left equals
$\sum_{j\geq1}(1+\alpha(1+j^2))^{-1}$ and is of order
$\alpha^{-1/2}$ by an integral comparison.
Since $\alpha_{\lambda_r}$ is of order $r^{-6}$, choosing
$\theta=1/2+\epsilon/6$ gives the upper exponent $3+\epsilon$ for
$0<\epsilon\leq3$; larger values of $\epsilon$ follow from the case
$\epsilon=3$.  Thus, for every $\epsilon>0$, there are constants $c>0$ and
$C_\epsilon>0$ such that
\begin{equation*}
    c r^3
    \leq
    d_{\mathrm{eff}}(\alpha_{\lambda_r},\gamma)
    \leq
    C_\epsilon r^{3+\epsilon}.
\end{equation*}
The regularized kernel diagonal measure therefore pays for low-degree modes permitted
by the RKHS but absent from the gradient: its sufficient sample-size estimate
is hence of order $r^{3+\epsilon}$ for any $\epsilon>0$.
For a fixed $\theta$-kernel diagonal measure, the corresponding estimate is of
order $r^{6\theta}$.

Let us now describe how sampling can be performed in practice for these measures.
The non-Gaussian kernel diagonal measures share a common structure:
for nonnegative coefficients $(\omega_\nu)_{\nu\in\mathcal F}$ with
$Z_\omega=\sum_\nu\omega_\nu<\infty$, they have density
\begin{equation*}
    \frac{\dif\rho_\omega}{\dif\mu}(\yb)
    =
    \frac{1}{Z_\omega}
    \sum_{\nu\in\mathcal F}\omega_\nu H_\nu(\yb)^2.
\end{equation*}
Hence, we can first sample the degree index $\nu$ with probability
$\omega_\nu/Z_\omega$.  Conditional on $\nu$, the coordinates are independent and
coordinate $j$ has law $H_{\nu_j}^2\dif\mu_1$, which can be sampled by,
e.g., numerical inversion or rejection.
For the $\theta$-kernel diagonal measure,
$\omega_\nu=\gamma_\nu^\theta$ factorizes over the coordinates, with
\begin{equation*}
    \omega_\nu
    =
    \prod_{j\geq1}b_{\nu_j,2}(j)^{-\theta},
    \qquad
    \Prob{\nu_j=n}
    =
    \frac{b_{n,2}(j)^{-\theta}}
    {\sum_{k\geq0}b_{k,2}(j)^{-\theta}},
    \qquad
    n\geq0.
\end{equation*}
Thus, the degrees $\nu_j$ can be sampled independently.
In computations, each infinite degree distribution must be truncated at a
sufficiently large cutoff $N_j$ and renormalized.
The regularized kernel diagonal coefficients do not factorize, but the
unnormalized weights satisfy
\begin{equation*}
    \frac{\gamma_\nu}{\gamma_\nu+\alpha_{\lambda_r}}
    \leq
    \alpha_{\lambda_r}^{-\theta}\gamma_\nu^\theta.
\end{equation*}
Thus, the factorized $\theta$-kernel diagonal distribution provides a
proposal for rejection sampling.
The same coordinatewise cutoffs are used for the factorized proposal in the
regularized kernel diagonal sampler.
For the regularized Christoffel measure, we instead choose coordinate $j$ with
probability proportional to $j^{-6}/(j^{-6}+\lambda_r)$, sample that
coordinate from $f_j'^2\dif\mu_1$ using a rejection sampler, and draw
the remaining coordinates from $\mu_1$.

We compare the sample sizes required for active subspace recovery under the base Gaussian measure and the sampling measures described above, using
\begin{equation*}
    d=64,
    \qquad
    r\in\{3,5,7,9,11\},
    \qquad
    \theta\in\{0.8,1\}.
\end{equation*}
Both the model and the sampling measures are truncated to these $64$
coordinates.
The recovery errors below refer to this truncated model.
We take $N_j$ large enough to include every degree $n$ such that
$\gamma_{ne_j}\geq\alpha_{\lambda_{11}}$.
For each measure and rank, we estimate the recovery probability at each tested sample size using $200$ independent trials, counting a trial as successful when
\begin{equation*}
    \err(\hat\Pi_r)^2
    \leq
    1.1\err(\Pi_r)^2.
\end{equation*}
We search over powers of two to bracket the $90\%$ empirical success threshold, then refine this interval with three equally spaced interior sample sizes.
We report $\hat M$ as the smallest tested size at which the empirical success rate reaches $90\%$ and remains above this threshold at every larger tested size, requiring at least one such larger size.
If no qualifying size is found within $M_{\max}=2^{18}$, recovery is reported as unsuccessful.

Results are shown in Figure~\ref{fig:numerics-hermite-recovery}.
The figure shows that the regularized Christoffel measure achieves the best recovery,
followed by the regularized kernel diagonal measure, and then the $\theta$-kernel diagonal measures.
The base measure is the least effective, as expected, and it fails to reach the recovery criterion before $M_{\max}$ samples for $r\geq7$.
The dashed curves, which show the coherence-based rank dependence, closely match the empirical recovery sizes.

\begin{figure}[t]
    \centering
    \includegraphics[width=0.80\textwidth]{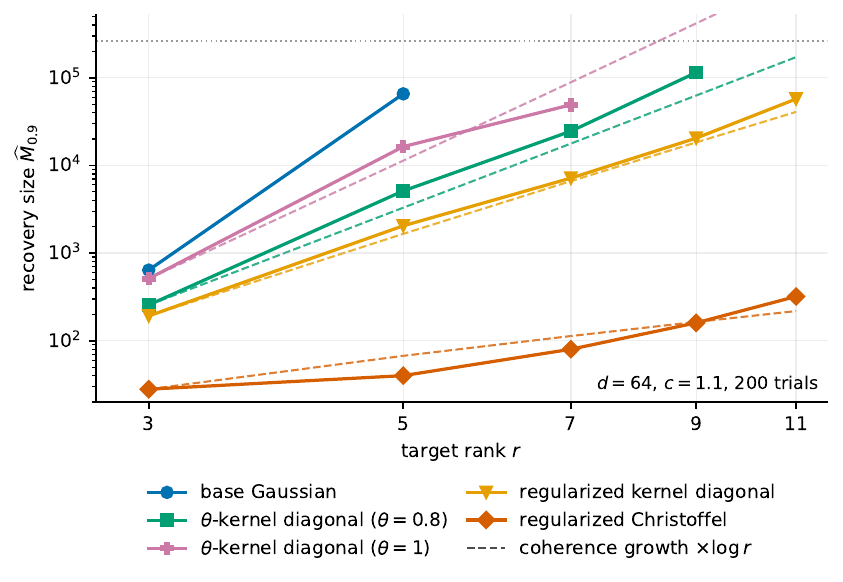}
    \caption{
    Empirical recovery size \(\hat M\) for the odd-rank Hermite experiment,
    based on \(200\) trials per tested sample size.
    The curves compare the base Gaussian, \(\theta\)-kernel diagonal measures
    with \(\theta=0.8\) and \(\theta=1\), the regularized kernel diagonal measure, and
    the regularized Christoffel measure.
    A curve ends when the \(90\%\) recovery criterion is not reached by
    \(M_{\max}=2^{18}\).
            The dashed curves show the coherence-based rank dependence
            \(d_{\mathrm{eff}}(\lambda_r)\log r\),
            \(d_{\mathrm{eff}}(\alpha_{\lambda_r},\gamma)\log r\), and
            \(\lambda_r^{-\theta}\log r\).
            Each guide is scaled to the first finite point of the corresponding curve.
        }
    \label{fig:numerics-hermite-recovery}
\end{figure}

Figure~\ref{fig:numerics-hermite-marginals} shows the densities of the absolute coordinate values,
illustrating how the sampling measures redistribute probability within individual coordinates.
They use $r=7$ and
$\theta\in\{0.8,1\}$.  The derivative $f_3'$ contains $Q_3$, whereas
$f_4'=H_8$.  The regularized Christoffel marginal uses the exact probability law
$f_j'^2\dif\mu_1$, whose density with respect to Lebesgue measure is
$f_j'(y)^2e^{-y^2/2}/\sqrt{2\pi}$.
The kernel diagonal measures also allocate probability
to modes permitted by the ambient Hermite class but absent from the gradient.
We refer to Appendix~\ref{app:numerics-hermite-joint-marginals} for additional
visualizations of the marginals.

\begin{figure}[t]
    \centering
    \includegraphics[width=\textwidth]{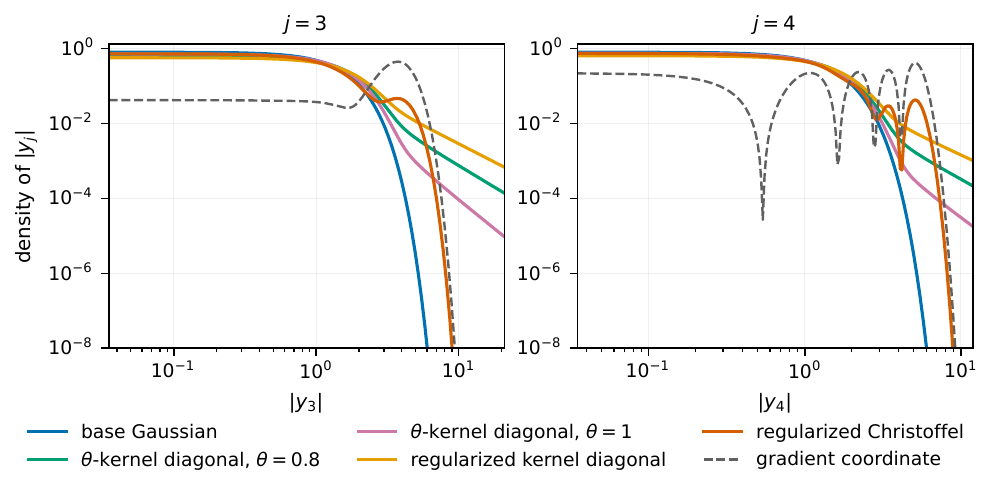}
    \caption{
    One-dimensional folded marginal densities with respect to Lebesgue
    measure on $[0,\infty)$, at $r=7$ for coordinates $j=3$ and $j=4$.
    The curves compare the base Gaussian, $\theta$-kernel diagonal
    measures with $\theta=0.8$ and $\theta=1$, the regularized kernel
    diagonal measure, and the regularized Christoffel measure.
    The gray curve is the folded density of the normalized-gradient
    probability law $f_j'(y)^2\dif\mu_1(y)$.
    }
    \label{fig:numerics-hermite-marginals}
\end{figure}
}

\subsection{A lognormal elliptic model with Haar spatial modes}
\label{subsec:numerics-lognormal-haar-modes}

We next consider a finite-dimensional instance of the lognormal model from
Section~\ref{subsec:lognormal-model} with a coefficient expansion in Haar wavelets.
More specifically, for $L=7$, let
\begin{equation*}
    \mathcal I_L
    :=
    \{(\ell,k):{1\leq\ell\leq L},\ 0\leq k<{2^{\ell-1}}\},
    \qquad
    d=\abs{\mathcal I_L}=2^L-1=127.
\end{equation*}
We order the pairs $(\ell,k)\in\mathcal I_L$ first by increasing level $\ell$
and then by increasing position $k$ within each level.
On the dyadic interval
$I_{\ell,k}=[k{2^{-(\ell-1)}},(k+1){2^{-(\ell-1)}})$, define the Haar function
\begin{equation*}
    \phi_{\ell,k}
    =
    \ind_{[k{2^{-(\ell-1)}},(k+1/2){2^{-(\ell-1)}})}
    -
    \ind_{[(k+1/2){2^{-(\ell-1)}},(k+1){2^{-(\ell-1)}})}.
\end{equation*}
Let $s>0$ denote the smoothness parameter and let $c_{\mathrm{amp}}>0$ denote
the amplitude.
For $Y\sim\mathcal N(0,I_d)$, define
\begin{equation}
    \label{eq:numerics-lognormal-haar-coefficient}
    a(x,\yb)
    =
    \exp\left(
    \sum_{(\ell,k)\in\mathcal I_L}
    y_{\ell,k}\psi_{\ell,k}(x)
    \right),
    \qquad
    \psi_{\ell,k}(x)
    =
    c_{\mathrm{amp}}{2^{-s(\ell-1)}}\phi_{\ell,k}(x).
\end{equation}
For a source location $x_{\mathrm s}\in(0,1)$, we solve
    {on $D=(0,1)$}
\begin{equation}
    \label{eq:numerics-lognormal-haar-pde}
    -\frac{\dif}{\dif x}
    \left(
    a(x,\yb)\frac{\dif u}{\dif x}(x,\yb)
    \right)
    =\delta_{x_{\mathrm s}},
    \qquad
    {x\in D},
    \qquad
    u(0,\yb)=u(1,\yb)=0.
\end{equation}
We fix
\begin{equation*}
    c_{\mathrm{amp}}=0.345077,
    \qquad
    x_{\mathrm s}=19/32,
    \qquad
    s\in\{1,1.5,2,2.5\}.
\end{equation*}
The quantity of interest is the point value $f(\yb)=u(3/8,\yb)$.
Both the point load and the point evaluation are bounded functionals on
$H_0^1(0,1)$.
We use a conservative second-order centered finite-difference scheme on a
uniform grid with $N=15359$ interior points and evaluate the coefficient at
cell midpoints.
Since the gradient covariance is not available in closed form, we use
$M_{\mathrm{ref}}=5\cdot10^5$ independent samples from the base Gaussian
measure to form the reference covariance
\begin{equation*}
    C_{\mathrm{ref}}
    :=
    \frac{1}{M_{\mathrm{ref}}}
    \sum_{i=1}^{M_{\mathrm{ref}}}
    \nabla f(Y_i)\otimes\nabla f(Y_i).
\end{equation*}
Let $\Pi_r^{\mathrm{ref}}$ be the leading rank-$r$ spectral projector of
$C_{\mathrm{ref}}$, and define the reference gradient projection error by
\begin{equation}
    \label{eq:numerics-lognormal-haar-reference-error}
    \err_{\mathrm{ref}}(\Pi)^2
    :=
    \operatorname{tr}\left((I-\Pi)C_{\mathrm{ref}}\right).
\end{equation}
We use $C_{\mathrm{ref}}$, $\Pi_r^{\mathrm{ref}}$, and
$\err_{\mathrm{ref}}$ for all reported reference active subspace errors, active directions, and recovery criteria, and note that these quantities remain subject to sampling error in $C_{\mathrm{ref}}$.
The experiments below compare the rank-$r$ reference active subspace error with the
theoretical decay estimates, estimate conditional ridge errors for coordinate
and reference active directions, and test empirical active subspace recovery
under several sampling measures.

We first record the theoretical decay scalings for the rank-$r$ reference
active subspace error.
Set
\begin{equation*}
    \epsilon_\varrho=0.1,
    \qquad
    a_s=s-\epsilon_\varrho,
\end{equation*}
and consider the corresponding infinite Haar hierarchy with weights
\begin{equation*}
    \varrho_{\ell,k}
    =
    \varrho_0 {2^{a_s(\ell-1)}}.
\end{equation*}
At each $x$, precisely one Haar function is active at each level.
For any Hermite order $\mathfrak r>2a_s$, the weight prefactor $\varrho_0$
can therefore be chosen sufficiently small that the weighted smallness
condition holds.
Moreover,
\begin{equation*}
    \sum_{{\ell\geq1}}\sum_{k=0}^{{2^{\ell-1}-1}}
    \varrho_{\ell,k}^{-2\theta}
    \asymp
    \sum_{{\ell\geq1}}{2^{(1-2\theta a_s)(\ell-1)}}
    <\infty
    \quad\Longleftrightarrow\quad
    \theta>\frac{1}{2a_s}.
\end{equation*}
Because $\mathfrak r>2a_s$, the condition $\theta>1/(2a_s)$ also implies
$\theta>1/\mathfrak r$.
Consequently, Theorem~\ref{thm:lognormal-pde-smoothness-consequences} gives
\begin{equation}
    \label{eq:numerics-lognormal-haar-tail-rate}
    \err(\Pi_r)
    \leq
    C_\theta r^{-1/\theta},
    \qquad
    \theta>\frac{1}{2a_s}.
\end{equation}
As $\theta$ decreases to $1/(2a_s)$, the exponent $-1/\theta$ approaches the
limiting slope $-2a_s$.
These limiting slopes are $-1.8,-2.8,-3.8$, and $-4.8$ for the four tested
values of $s$.
Figure~\ref{fig:numerics-lognormal-haar-population-tail} compares the normalized
reference active subspace errors
\begin{equation*}
    \frac{
        \err_{\mathrm{ref}}(\Pi_r^{\mathrm{ref}})
    }{
        \sqrt{\operatorname{tr}(C_{\mathrm{ref}})}
    }
\end{equation*}
with the limiting rates $r^{-2a_s}$.
The plateaus and drops reflect the dyadic level structure.
Away from these transitions, the envelopes of the numerical active subspace errors {qualitatively}
follow the theoretical slopes, providing numerical evidence that the bounds
capture both the observed decay and its dependence on $s$.
The corresponding reference spatial modes are shown in
Appendix~\ref{app:numerics-lognormal-haar-modes}.

\begin{figure}[t]
    \centering
    \includegraphics[width=0.80\textwidth]{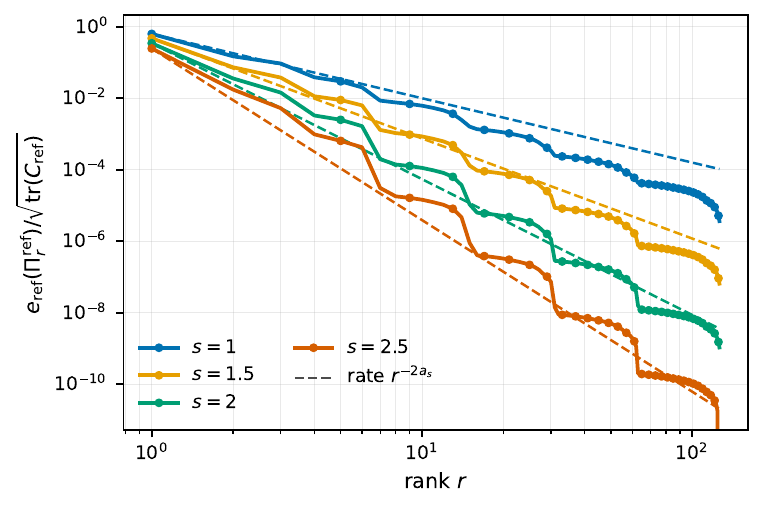}
    \caption{
        Normalized rank-$r$ reference active subspace error for the lognormal Haar
        model.
        The solid curves show
        $\err_{\mathrm{ref}}(\Pi_r^{\mathrm{ref}})/
            \sqrt{\operatorname{tr}(C_{\mathrm{ref}})}$.
        Dashed lines show the limiting scalings $r^{-2a_s}$ from
        \eqref{eq:numerics-lognormal-haar-tail-rate}, rescaled for comparison.
    }
    \label{fig:numerics-lognormal-haar-population-tail}
\end{figure}

We next estimate the error of the best ridge function associated with a fixed
subspace.
Let $V:\R^r\to\R^d$ be an isometry, and let
$V_\perp:\R^{d-r}\to\R^d$ be an isometry whose range is orthogonal to the
range of $V$.
    {Draw} mutually independent random variables
\begin{equation*}
    {X}\sim\mathcal N(0,I_r),
    \qquad
    \Xi^{(1)},\Xi^{(2)}\sim\mathcal N(0,I_{d-r}),
\end{equation*}
and set
\begin{equation*}
    {Y}^{(q)}
    =
    {VX}+V_\perp\Xi^{(q)},
    \qquad
    q\in\{1,2\}.
\end{equation*}
The two samples have the same retained coordinates
${X_V(Y^{(q)})}=V^\top {Y}^{(q)}={X}$ and conditionally independent discarded coordinates.
Therefore, the ridge error satisfies
\begin{equation}
    \label{eq:numerics-lognormal-haar-conditional-error}
    \mathcal E(V)^2
    :=
    \norm{f-\mathbb E[f\mid {\mathcal G_V}]}_{L^2_\mu}^2
    =
    \frac12\mathbb E
    \left[
        \left(f(Y^{(1)})-f(Y^{(2)})\right)^2
        \right].
\end{equation}
We approximate this expectation using $4096$ independent pairs for each
subspace.
We compare the coordinate subspace spanned by the first $r$ Haar coordinates
in the level-by-level ordering with the range of $\Pi_r^{\mathrm{ref}}$.
The coordinate estimate has the limiting scaling $r^{-a_s}$ from
\eqref{eq:lognormal-coordinate-ridge-error}, whereas
\eqref{eq:lognormal-as-ridge-error} gives the active scaling $r^{-2a_s}$ and
the exact ridge-error bound $\mathcal E(V_r)\leq\err(\Pi_r)$ in terms of the
rank-$r$ population active subspace error.
In the numerical comparison,
$\err_{\mathrm{ref}}(\Pi_r^{\mathrm{ref}})$ serves as a reference estimate
of this bound.
Figure~\ref{fig:numerics-lognormal-haar-ridge-errors} shows the estimates through
rank $20$.
    {
        Least-squares fits over the tested ranks $r\in\{8,9,10,15,20\}$ give
        coordinate slopes $-1.07$, $-1.47$, $-1.90$, and $-2.35$, and active slopes
        $-2.32$, $-3.15$, $-3.95$, and $-4.75$, for $s=1$, $1.5$, $2$, and $2.5$,
        respectively.
        The corresponding predicted coordinate slopes are $-0.9$, $-1.4$, $-1.9$,
        and $-2.4$, while the predicted active slopes are the limiting slopes
        listed above.
        The agreement is close for $s=2$ and $s=2.5$, while visible discrepancies remain for $s=1$ and $s=1.5$.
        The experiment shows that active
        ridge errors decay markedly faster than the coordinate ridge errors in all
        four cases, supporting the theoretical prediction that active subspaces can
        outperform coordinate projections.}

\begin{figure}[t]
    \centering
    \includegraphics[width=\textwidth]{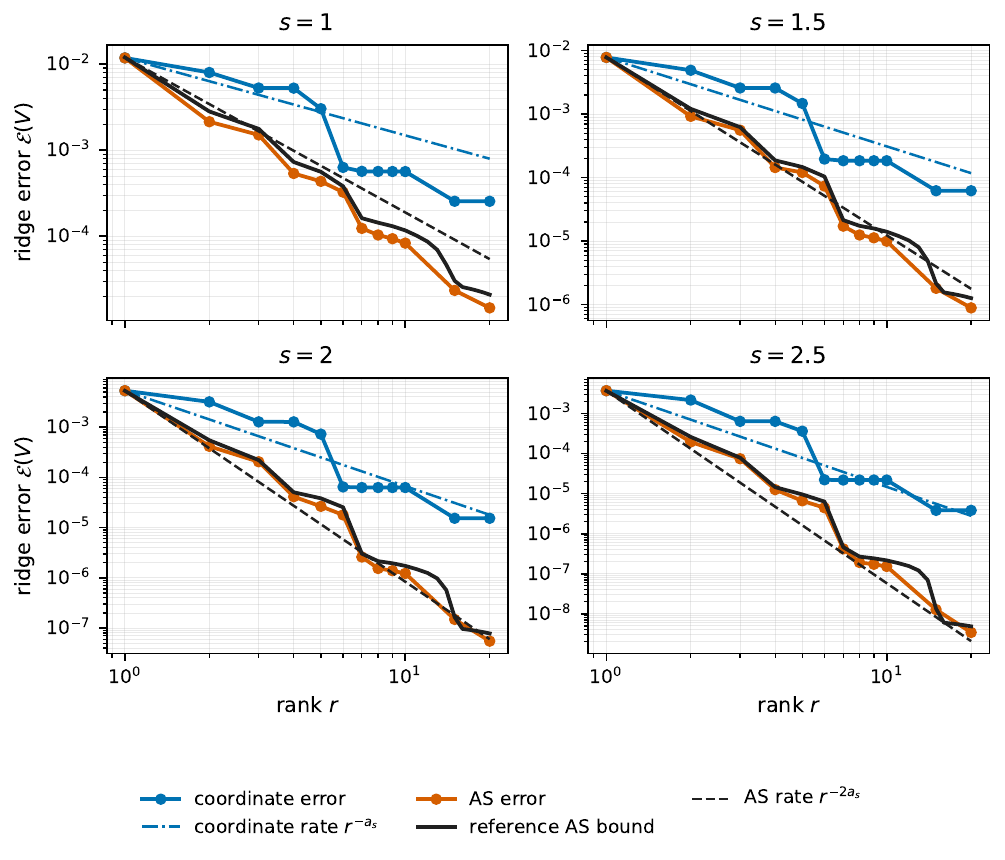}
    \caption{
        Conditional root-mean-square ridge error for coordinate and reference
        active directions in the lognormal Haar model.
        The black curve is the reference estimate of the active subspace bound,
        and the dash-dotted and dashed lines show the coordinate and active
        reference slopes, respectively.
    }
    \label{fig:numerics-lognormal-haar-ridge-errors}
\end{figure}

Finally, we test empirical active subspace recovery for $s=2.5$.
    {For this experiment, we use the $\mathfrak r=2$ weighted Hermite
        kernel introduced in Section~\ref{subsec:numerics-hermite-rare-activation}
        with $\varrho_{\ell,k}=\varrho_0 2^{a_s(\ell-1)}$, where
        $\varrho_0=0.2$ and $a_s=2.4$.}
Because precisely one Haar function is active at each level, these weights
satisfy
\begin{equation}
    \label{eq:numerics-lognormal-haar-weighted-size}
    \sup_{x\in D}
    \sum_{(\ell,k)\in\mathcal I_L}
    \varrho_{\ell,k}\abs{\psi_{\ell,k}(x)}
    =
    c_{\mathrm{amp}}\varrho_0
    \sum_{{\ell=1}}^{{L}}{2^{-\epsilon_\varrho(\ell-1)}}
    \approx
    0.3962
    <
    \frac{\log 2}{\sqrt{3}}
    \approx
    0.4002.
\end{equation}

In addition to the base measure $\mu$, we consider the
    {$\mathfrak r=2$} Hermite
$\theta$-kernel diagonal measures with
$\theta\in\{0.7,0.8,0.9,1\}$, which we sample
following the mixture construction from
Section~\ref{subsec:numerics-hermite-rare-activation}.
{Viewed as a finite truncation of the corresponding infinite
weighted Hermite space, the reciprocal-weight condition holds for
$\theta>1/(2a_s)$, whereas $\mathfrak r=2$ requires $\theta>1/2$.
Since $a_s=2.4$, every $\theta>1/2$ is admissible for the infinite-dimensional
weighted Hermite space, and hence for its finite-dimensional truncation.
Thus, $1/2$ is the infimum of the admissible exponents but is not itself
admissible.
The four tested values of $\theta$ lie in this range, and the weighted
smallness condition for the finite model has been verified above.}
As in the Hermite experiment, we estimate recovery probabilities by the fraction of successful independent trials.
Here, we use $10^3$ trials and a fixed grid of $78$ sample sizes between $1$ and $5000$, using the first $M$ samples of each trial to construct $\hat\Pi_r$.
We report $\hat M$ as the smallest grid value for which at least $90\%$ of the trials satisfy
\begin{equation*}
    \err_{\mathrm{ref}}(\hat\Pi_r)^2
    \leq
    2\err_{\mathrm{ref}}(\Pi_r^{\mathrm{ref}})^2.
\end{equation*}

As shown in Figure~\ref{fig:numerics-lognormal-haar-recovery}, sampling from the
base Gaussian measure requires substantially fewer samples than sampling from
the $\theta$-kernel diagonal measures over the tested range.
Among the latter measures, recovery improves as $\theta$ increases.
This behavior does not contradict the theory: the {regularized coherence} estimate in
\eqref{eq:lognormal-ridge-coherence-bound} is a sufficient worst-case bound
over the prescribed smoothness class and does not assert that the resulting
measures outperform the base measure for every problem instance.
For the present PDE, the base measure evidently resolves the relevant gradient
directions efficiently, whereas the modified measures also allocate samples to
behavior permitted by the smoothness class but not prominent in this instance.
Taken together, the two experiments illustrate that the effect of
kernel diagonal sampling is instance-dependent: it improves recovery in the
rare-activation example, whereas the base measure performs better for the
present PDE.

\begin{figure}[t]
    \centering
    \includegraphics[width=0.80\textwidth]{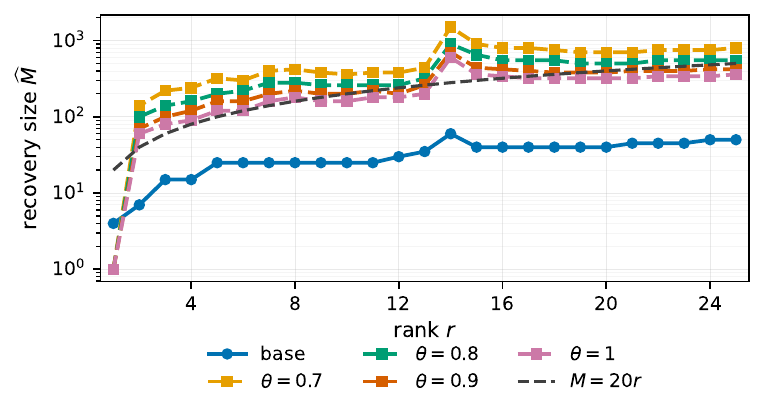}
    \caption{
        Empirical recovery size $\hat M$ for the lognormal Haar model,
        based on $10^3$ independent trials.
        The curves show sampling from the base Gaussian measure and from the
            {$\mathfrak r=2$} Hermite $\theta$-kernel diagonal measures.
        The dashed black line is the linear reference scaling $M=20r$.
    }
    \label{fig:numerics-lognormal-haar-recovery}
\end{figure}

\clearpage

\section{Conclusions}

This work has developed a finite-sample theory for empirical active subspaces
in the population projection-error metric relevant for ridge approximation.
The main point of our work is that recovering a quasi-optimal active subspace does not
require estimating the full gradient covariance accurately in operator norm.
By working directly with the projection objective and, in the positive-tail
regime, regularizing at the canonical {regularization} scale, namely the active subspace
tail energy normalized by the target rank, we obtained high-probability
quasi-optimality bounds governed by a {regularized} inverse Christoffel coherence.
In the exact finite-rank regime, the same analysis gives exact recovery at a
sufficiently small positive {regularization} scale.  Under a weighted
bounded-gradient assumption, this already improves the a priori tolerance dependence
of the standard covariance-concentration route.  Sampling from the {regularized} Christoffel
measure reduces the coherence to the covariance effective dimension and the sufficient
sample size is of order $r\log r$, up to constants and probability factors.

We then showed how a priori gradient smoothness makes this result usable without
knowing the active subspace in advance.
RKHS regularity of scalar gradient projections, together with summability of
the RKHS embedding spectrum, controls the {regularized} coherence.
The same parameter-space regularity enters the active subspace tail estimate,
where it is combined with Schatten summability of the {RKHS covariance}
across coordinate directions.
Importantly, this RKHS is only required to describe an
a priori regularity class and may contain modes that are not present in the
particular gradient field.  Its kernel diagonal nevertheless yields
constructive importance measures, and the resulting coherence and tail
estimates can be combined into prescriptions for the rank, {regularization} scale, and
number of gradient evaluations at a prescribed tolerance.
    { Weighted Hermite and Legendre regularity estimates verify these assumptions for quantities of interest of the solution to elliptic PDEs with lognormal Gaussian and affine uniform coefficients, respectively.
        In the lognormal Gaussian case, the resulting a priori ridge-error bound doubles the decay exponent obtained for ridge approximation based on the best-ranked coordinates according to the coefficients of the input field expansion. In the uniform case, the analysis also explains why sampling from the base measure can already be quasi-optimal for the associated smoothness class.}

The numerical experiments support the theoretical decay rates for the rank-$r$
active subspace error.  They also show that smoothness-adapted sampling
measures can improve empirical recovery, but may be conservative for problem
instances that are already well resolved by the base measure.

These observations suggest several directions for further work.  A natural
algorithmic goal is to refine an a priori kernel diagonal sampling measure
adaptively, using initial gradient samples to approach the unknown {regularized}
Christoffel measure.  On the analytical side, replacing worst-case coherence
by sharper instance-dependent or probabilistic quantities may better explain
typical recovery.  Finally, extending the framework to broader classes of
parametric PDEs would clarify how far the same link between smoothness,
sampling stability, and dimension reduction persists.

\clearpage
\bibliography{biblio.bib}

\begin{thebibliography}{35}

\bibitem[Adcock \emph{et~al.}(2022)Adcock, Brugiapaglia and
  Webster]{adcock2022sparse}
Adcock, B., Brugiapaglia, S. and Webster, C.~G. (2022) \emph{Sparse polynomial
  approximation of high-dimensional functions}.
\newblock Number~25 in Computational Science \& Engineering. SIAM.

\bibitem[Avron \emph{et~al.}(2019)Avron, Kapralov, Musco, Musco, Velingker and
  Zandieh]{avron2019universal}
Avron, H., Kapralov, M., Musco, C., Musco, C., Velingker, A. and Zandieh, A.
  (2019) A universal sampling method for reconstructing signals with simple
  {Fourier} transforms.
\newblock In \emph{Proceedings of the 51st Annual {ACM} {SIGACT} Symposium on
  Theory of Computing}, pp. 1051--1063.

\bibitem[Bach(2017)]{bach2017equivalence}
Bach, F. (2017) On the equivalence between kernel quadrature rules and random
  feature expansions.
\newblock \emph{Journal of Machine Learning Research} \textbf{18}(21), 1--38.

\bibitem[Bachmayr \emph{et~al.}(2017{a})Bachmayr, Cohen, {DeVore} and
  Migliorati]{bachmayr2017sparseii}
Bachmayr, M., Cohen, A., {DeVore}, R. and Migliorati, G. (2017{a}) Sparse
  polynomial approximation of parametric elliptic {PDEs}. {Part II}: Lognormal
  coefficients.
\newblock \emph{ESAIM: Mathematical Modelling and Numerical Analysis}
  \textbf{51}(1), 341--363.

\bibitem[Bachmayr \emph{et~al.}(2017{b})Bachmayr, Cohen and
  Migliorati]{bachmayr2017sparsei}
Bachmayr, M., Cohen, A. and Migliorati, G. (2017{b}) Sparse polynomial
  approximation of parametric elliptic {PDEs}. {Part I}: Affine coefficients.
\newblock \emph{ESAIM: Mathematical Modelling and Numerical Analysis}
  \textbf{51}(1), 321--339.

\bibitem[Bebendorf(2003)]{bebe2003}
Bebendorf, M. (2003) A note on the {Poincar{\'e}} inequality for convex
  domains.
\newblock \emph{Zeitschrift f{\"u}r Analysis und ihre Anwendungen} \textbf{22},
  751--756.

\bibitem[Bogachev(1998)]{bogachev1998gaussian}
Bogachev, V.~I. (1998) \emph{Gaussian measures}.
\newblock Number~62 in Mathematical Surveys and Monographs. American
  Mathematical Society.

\bibitem[Cohen and {DeVore}(2015)]{cohen2015approximation}
Cohen, A. and {DeVore}, R. (2015) Approximation of high-dimensional parametric
  {PDEs}.
\newblock \emph{Acta Numerica} \textbf{24}, 1--159.

\bibitem[Cohen and Migliorati(2023)]{cohen2023near}
Cohen, A. and Migliorati, G. (2023) Near-optimal approximation methods for
  elliptic {PDEs} with lognormal coefficients.
\newblock \emph{Mathematics of Computation} \textbf{92}(342), 1665--1691.

\bibitem[Cohen \emph{et~al.}(2017)Cohen, Musco and Musco]{cohen2017ridge}
Cohen, M.~B., Musco, C. and Musco, C. (2017) Input sparsity time low-rank
  approximation via ridge leverage score sampling.
\newblock In \emph{Proceedings of the Twenty-Eighth Annual {ACM--SIAM}
  Symposium on Discrete Algorithms}, pp. 1758--1777.

\bibitem[Constantine(2015)]{constantine2015active}
Constantine, P.~G. (2015) \emph{Active subspaces: Emerging ideas for dimension
  reduction in parameter studies}.
\newblock Number~2 in SIAM Spotlights. SIAM.

\bibitem[Constantine \emph{et~al.}(2017)Constantine, Eftekhari, Hokanson and
  Ward]{constantine2017near}
Constantine, P.~G., Eftekhari, A., Hokanson, J. and Ward, R.~A. (2017) A
  near-stationary subspace for ridge approximation.
\newblock \emph{Computer Methods in Applied Mechanics and Engineering}
  \textbf{326}, 402--421.

\bibitem[Constantine and Gleich(2014)]{constantine_gleich_2014_computing}
Constantine, P.~G. and Gleich, D.~F. (2014) Computing active subspaces with
  {Monte Carlo}.
\newblock arXiv:1408.0545.

\bibitem[Davies(1995)]{davies1995spectral}
Davies, E.~B. (1995) \emph{Spectral Theory and Differential Operators}.
  Volume~42 of \emph{Cambridge Studies in Advanced Mathematics}.
\newblock Cambridge: Cambridge University Press.

\bibitem[Diestel \emph{et~al.}(1995)Diestel, Jarchow and
  Tonge]{Diestel_Jarchow_Tonge_1995}
Diestel, J., Jarchow, H. and Tonge, A. (1995) \emph{Trace Duality}, pp.
  125--153.
\newblock Cambridge Studies in Advanced Mathematics. Cambridge: Cambridge
  University Press.

\bibitem[Hesthaven \emph{et~al.}(2016)Hesthaven, Rozza and
  Stamm]{hesthaven2016certified}
Hesthaven, J.~S., Rozza, G. and Stamm, B. (2016) \emph{Certified reduced basis
  methods for parametrized partial differential equations}.
\newblock SpringerBriefs in Mathematics. Springer.

\bibitem[Hoang and Schwab(2014)]{HoangSchwab2014}
Hoang, V.~H. and Schwab, C. (2014) {$N$-term} {Wiener} chaos approximation
  rates for elliptic {PDEs} with lognormal {Gaussian} random inputs.
\newblock \emph{Mathematical Models and Methods in Applied Sciences}
  \textbf{24}(04), 797--826.

\bibitem[Holodnak \emph{et~al.}(2018)Holodnak, Ipsen and
  Smith]{holodnak2018probabilistic}
Holodnak, J.~T., Ipsen, I. C.~F. and Smith, R.~C. (2018) A probabilistic
  subspace bound with application to active subspaces.
\newblock \emph{SIAM Journal on Matrix Analysis and Applications}
  \textbf{39}(3), 1208--1220.

\bibitem[Kundu and Wycoff(2026)]{kundu2026active}
Kundu, P. and Wycoff, N. (2026) Active subspaces in infinite dimension.
\newblock In \emph{Proceedings of the 29th International Conference on
  Artificial Intelligence and Statistics}, volume 300 of \emph{Proceedings of
  Machine Learning Research}, pp. 2089--2097. PMLR.

\bibitem[Lam \emph{et~al.}(2020)Lam, Zahm, Marzouk and
  Willcox]{lam2020multifidelity}
Lam, R.~R., Zahm, O., Marzouk, Y.~M. and Willcox, K.~E. (2020) Multifidelity
  dimension reduction via active subspaces.
\newblock \emph{SIAM Journal on Scientific Computing} \textbf{42}(2),
  A929--A956.

\bibitem[Luo \emph{et~al.}(2025)Luo, O'Leary-Roseberry, Chen and
  Ghattas]{luo2025dimension}
Luo, D., O'Leary-Roseberry, T., Chen, P. and Ghattas, O. (2025) Dimension
  reduction for derivative-informed operator learning: An analysis of
  approximation errors.
\newblock arXiv:2504.08730.

\bibitem[Milbradt and Wahl(2020)]{milbradt2020high}
Milbradt, C. and Wahl, M. (2020) High-probability bounds for the reconstruction
  error of {PCA}.
\newblock \emph{Statistics \& Probability Letters} \textbf{161}, 108741.

\bibitem[Minsker(2017)]{minsker2017bernstein}
Minsker, S. (2017) On some extensions of {Bernstein}'s inequality for
  self-adjoint operators.
\newblock \emph{Statistics \& Probability Letters} \textbf{127}, 111--119.

\bibitem[Musco and Musco(2020)]{musco2020pcp}
Musco, C. and Musco, C. (2020) Projection-cost-preserving sketches: Proof
  strategies and constructions.
\newblock arXiv:2004.08434.

\bibitem[Musco and Woodruff(2017)]{musco2017kernel}
Musco, C. and Woodruff, D.~P. (2017) Is input sparsity time possible for kernel
  low-rank approximation?
\newblock In \emph{Advances in Neural Information Processing Systems},
  volume~30, pp. 4435--4445.

\bibitem[Pauwels \emph{et~al.}(2018)Pauwels, Bach and
  Vert]{pauwels2018relating}
Pauwels, E., Bach, F. and Vert, J.-P. (2018) Relating leverage scores and
  density using regularized {Christoffel} functions.
\newblock In \emph{Advances in Neural Information Processing Systems},
  volume~31, pp. 1663--1672.

\bibitem[Rei{\ss} and Wahl(2020)]{reiss2020nonasymptotic}
Rei{\ss}, M. and Wahl, M. (2020) Nonasymptotic upper bounds for the
  reconstruction error of {PCA}.
\newblock \emph{The Annals of Statistics} \textbf{48}(2), 1098--1123.

\bibitem[Rudi \emph{et~al.}(2013)Rudi, Ca{\~n}as and Rosasco]{rudi2013sample}
Rudi, A., Ca{\~n}as, G.~D. and Rosasco, L. (2013) On the sample complexity of
  subspace learning.
\newblock In \emph{Advances in Neural Information Processing Systems},
  volume~26, pp. 2067--2075.

\bibitem[Simon(2005)]{simon2005trace}
Simon, B. (2005) \emph{Trace ideals and their applications}. Second edition.
\newblock Number 120 in Mathematical Surveys and Monographs. American
  Mathematical Society.

\bibitem[Stein(1970)]{stein1970singular}
Stein, E.~M. (1970) \emph{Singular Integrals and Differentiability Properties
  of Functions}.
\newblock Number~30 in Princeton Mathematical Series. Princeton, NJ: Princeton
  University Press.

\bibitem[T{\"o}lle(2012)]{tolle2012uniqueness}
T{\"o}lle, J.~M. (2012) Uniqueness of weighted {Sobolev} spaces with weakly
  differentiable weights.
\newblock \emph{Journal of Functional Analysis} \textbf{263}(10), 3195--3223.

\bibitem[Tsilifis and Ghanem(2017)]{tsilifis2017reduced}
Tsilifis, P. and Ghanem, R.~G. (2017) Reduced {Wiener} chaos representation of
  random fields via basis adaptation and projection.
\newblock \emph{Journal of Computational Physics} \textbf{341}, 102--120.

\bibitem[Tsilifis \emph{et~al.}(2019)Tsilifis, Huan, Safta, Sargsyan, Lacaze,
  Oefelein, Najm and Ghanem]{tsilifis2019compressive}
Tsilifis, P., Huan, X., Safta, C., Sargsyan, K., Lacaze, G., Oefelein, J.~C.,
  Najm, H.~N. and Ghanem, R.~G. (2019) Compressive sensing adaptation for
  polynomial chaos expansions.
\newblock \emph{Journal of Computational Physics} \textbf{380}, 29--47.

\bibitem[Williams(1991)]{williams_1991}
Williams, D. (1991) \emph{Probability with Martingales}.
\newblock Cambridge: Cambridge University Press.

\bibitem[Zahm \emph{et~al.}(2020)Zahm, Constantine, Prieur and
  Marzouk]{zahm2020gradient}
Zahm, O., Constantine, P.~G., Prieur, C. and Marzouk, Y.~M. (2020)
  Gradient-based dimension reduction of multivariate vector-valued functions.
\newblock \emph{SIAM Journal on Scientific Computing} \textbf{42}(1),
  A534--A558.

\end{thebibliography}
\bibliographystyle{CUP}    %

\appendix
\clearpage
\section{Auxiliary results for the active subspace setting}
\label{app:as-method}

This appendix proves the analytic facts used in Section~\ref{sec:slas}.
Throughout, $\mathcal C^\infty_{\mathrm{cyl}}$ denotes the space of smooth cylindrical functions introduced in Section~\ref{sec:slas}, and $\nabla_0$ denotes the cylindrical gradient defined in \eqref{eq:cylindrical-gradient}.
We write $\gamma$ for the standard Gaussian probability measure on $\R$ and
$\lambda=\frac12\mathcal L^1\vert_{[-1,1]}$ for the centered uniform probability measure on $[-1,1]$.

\subsection{Product Sobolev calculus}
\label{app:product-sobolev-calculus}

For both $\mu=\gamma^{\otimes d}$ and $\mu=\lambda^{\otimes d}$, the space
$\mathcal C^\infty_{\mathrm{cyl}}$ is dense in $L^2_\mu(\Gamma)$.
Indeed, let
\begin{equation}
    d_n:=\min\{n,d\},
    \qquad
    \mathcal F_n:=\sigma({\pi_1},\ldots,{\pi_{d_n}}),
\end{equation}
with the convention $\min\{n,+\infty\}=n$ {and where we recall that $\pi_m$ denotes the $m$th coordinate map}.
Conditional expectations onto $\mathcal F_n$ converge in $L^2_\mu$ to the original function, while {functions in $C_b^\infty(\R^{d_n})$} are dense in the corresponding finite-dimensional $L^2$ spaces.
When $d<+\infty$, the filtration is constant after $n=d$.

\begin{proposition}[Closability for Gaussian and uniform product measures]
    \label{prop:closability-product-measures}
    Let $d\in\N\cup\{+\infty\}$ and let either
    $\mu=\gamma^{\otimes d}$ or $\mu=\lambda^{\otimes d}$.
    Then
    \begin{equation}
        \nabla_0:
        \mathcal C^\infty_{\mathrm{cyl}}
        \subset L^2_\mu(\Gamma)
        \longrightarrow
        L^2_\mu(\Gamma;\ell^2(d))
    \end{equation}
    is closable.
\end{proposition}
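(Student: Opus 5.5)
We will prove closability through the standard criterion: if $h_n\in\mathcal C^\infty_{\mathrm{cyl}}$ satisfy $h_n\to0$ in $L^2_\mu(\Gamma)$ and $\nabla_0h_n\to G$ in $L^2_\mu(\Gamma;\ell^2(d))$, then $G=0$. The argument is coordinatewise. Since $G=\sum_m G_me_m$ with $G_m=\langle G,e_m\rangle$, and since convergence in $L^2_\mu(\Gamma;\ell^2(d))$ gives $\partial_mh_n\to G_m$ in $L^2_\mu(\Gamma)$ for each $m$, it is enough to show that $G_m=0$ for every fixed $m$. We will do this by testing $G_m$ against a dense class of cylindrical test functions and using an integration-by-parts formula for the one-dimensional factor $\mu_m$, with Fubini over the product structure.

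\emph{Gaussian case.} For $h,\Phi\in\mathcal C^\infty_{\mathrm{cyl}}$ we use the Gaussian integration by parts
\begin{equation*}
\int_\Gamma \partial_m h\,\Phi\,\dif\mu=\int_\Gamma h\,(\pi_m\Phi-\partial_m\Phi)\,\dif\mu .
\end{equation*}
Both functions depend on finitely many coordinates, so this is a finite-dimensional identity. It follows from Fubini and the one-dimensional identity $\int\phi'\psi\,\dif\gamma=\int\phi(y\psi-\psi')\,\dif\gamma$, which is valid for bounded smooth functions with bounded derivatives because $y\psi\in L^2_\gamma$. Take $h=h_n$ and let $n\to\infty$. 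The left side tends to $\int G_m\Phi\,\dif\mu$. The right side tends to $0$, since $\pi_m\Phi-\partial_m\Phi\in L^2_\mu$ (Gaussian moments are finite and $\Phi$ is bounded). Therefore $\int G_m\Phi\,\dif\mu=0$ for every $\Phi\in\mathcal C^\infty_{\mathrm{cyl}}$, and the density of $\mathcal C^\infty_{\mathrm{cyl}}$ in $L^2_\mu$ noted just before the proposition gives $G_m=0$.

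\emph{Uniform case.} Here boundary terms appear at $\pm1$, so we restrict the test functions. Let $\Phi=\chi(y_m)\Psi$, where $\chi\in C_c^\infty(-1,1)$ and $\Psi$ is cylindrical and independent of $y_m$. Then
\begin{equation*}
\int\partial_mh\,\Phi\,\dif\mu=-\int h\,\partial_m\Phi\,\dif\mu ,
\end{equation*}
and the same limit argument gives $\int G_m\chi(y_m)\Psi\,\dif\mu=0$. It remains to show that this class of products is total in $L^2_\mu$. We argue as follows. Linear combinations $\chi\otimes\Psi$ are dense in $L^2_{\lambda}(-1,1)\otimes L^2_{\lambda^{\otimes(d-1)}}$, because $C_c^\infty(-1,1)$ is dense in $L^2_\lambda(-1,1)$ and the cylindrical functions in the remaining coordinates are dense by the same martingale argument as before. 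The product measure then identifies $L^2_\mu$ with this Hilbert tensor product.

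The main technical point is this totality step, together with keeping the integration by parts strictly finite-dimensional. In the Gaussian case one must also check that the multiplier $\pi_m$ keeps $\pi_m\Phi$ in $L^2_\mu$, which is immediate. With $G_m=0$ for all $m$ we obtain $G=0$, and hence $\nabla_0$ is closable.
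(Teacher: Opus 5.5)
Your proposal is correct and follows essentially the same route as the paper: you use the graph criterion coordinatewise, Gaussian integration by parts with the multiplier $\pi_m\Phi-\partial_m\Phi\in L^2_\mu(\Gamma)$, and, in the uniform case, test functions compactly supported in $(-1,1)$ in the $m$th coordinate, whose density in $L^2_\mu(\Gamma)$ you justify somewhat more explicitly than the paper through the tensor-product identification. The paper also first checks that $\nabla_0$ is well defined on $L^2_\mu$ equivalence classes; your argument gives this too if you apply it to the constant sequence $h_n=F$ with $F=0$ $\mu$-almost everywhere.
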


\begin{proof}
    We first verify that the cylindrical gradient is well defined on $L^2_\mu$ equivalence classes.
    Suppose that $F\in\mathcal C^\infty_{\mathrm{cyl}}$ vanishes $\mu$-almost everywhere.
    In the Gaussian case, integration by parts gives
    \begin{equation}
        \int_\Gamma
        \partial_kF\,\Phi
        \dif\mu
        =
        \int_\Gamma
        F\left({\pi_k}\Phi-\partial_k\Phi\right)
        \dif\mu
        =0
    \end{equation}
    for every $\Phi\in\mathcal C^\infty_{\mathrm{cyl}}$.
    Density of the cylindrical functions implies $\partial_kF=0$ in $L^2_\mu$.
    In the uniform case, {take $\Phi\in\mathcal C^\infty_{\mathrm{cyl}}$ whose dependence on} the {$k$th coordinate is compactly supported in $(-1,1)$.
    Integration by parts in the $k$th coordinate gives}
    \begin{equation}
        \int_\Gamma
        \partial_kF\,\Phi
        \dif\mu
        =
        -
        \int_\Gamma
        F\partial_k\Phi
        \dif\mu
        =0
    \end{equation}
    {Since} these {test functions} are dense in $L^2_\mu{(\Gamma)}${, it follows that} ${\partial_kF=0}$.
    Hence two cylindrical representatives that agree $\mu$-almost everywhere have the same cylindrical gradient $\mu$-almost everywhere{.

    By the graph criterion for closability, it suffices to show that the following convergences imply $G=0$; see, for example, \cite[Section~1.1]{davies1995spectral}}.
    Let $F_n\in\mathcal C^\infty_{\mathrm{cyl}}$ now satisfy
    \begin{equation}
        F_n\to0
        \quad\text{in }L^2_\mu(\Gamma),
        \qquad
        \nabla_0F_n\to G
        \quad\text{in }L^2_\mu(\Gamma;\ell^2(d)).
    \end{equation}
    We prove that every coordinate $G_k$ vanishes.

    Suppose first that $\mu=\gamma^{\otimes d}$.
    For $\Phi\in\mathcal C^\infty_{\mathrm{cyl}}$, one-dimensional Gaussian integration by parts in the $k$th coordinate gives
    \begin{equation}
        \int_\Gamma
        \partial_kF_n\,\Phi
        \dif\mu
        =
        \int_\Gamma
        F_n\left({\pi_k}\Phi-\partial_k\Phi\right)
        \dif\mu.
    \end{equation}
    The factor ${\pi_k}\Phi-\partial_k\Phi$ belongs to $L^2_\mu(\Gamma)$.
    Passing to the limit yields
    \begin{equation}
        \int_\Gamma G_k\Phi\dif\mu=0.
    \end{equation}
    Density of $\mathcal C^\infty_{\mathrm{cyl}}$ in $L^2_\mu(\Gamma)$ implies $G_k=0$.

    Suppose next that $\mu=\lambda^{\otimes d}$.
    {For} ${\Phi}\in{\mathcal C}^\infty{_{\mathrm{cyl}}}$ {whose dependence on} the $k$th coordinate {is compactly supported in $(-1,1)$, integration by parts in that coordinate} gives
    \begin{equation}
        \int_\Gamma
        \partial_kF_n\,\Phi
        \dif\mu
        =
        -
        \int_\Gamma
        F_n\partial_k\Phi
        \dif\mu.
    \end{equation}
    Passing to the limit shows that
    \begin{equation}
        \int_\Gamma G_k\Phi\dif\mu=0.
    \end{equation}
    {Since these test functions} are dense in $L^2_\mu(\Gamma)$, {it follows that} $G_k=0$.

    Since $G_k=0$ for every coordinate,
    \begin{equation}
        \norm{G}_{L^2_\mu(\Gamma;\ell^2(d))}^2
        =
        \sum_{k=1}^d
        \norm{G_k}_{L^2_\mu(\Gamma)}^2
        =0.
    \end{equation}
    {Hence $G=0$, proving that $\nabla_0$} is {closable}.
\end{proof}

{In Section~\ref{sec:slas}, we stated} the {coordinate characterization \eqref{eq:coordinate-weak-derivative-characterization}.
We now prove this equivalence.
The weak derivatives used hereafter are those defined by \eqref{eq:gaussian-weak-derivative} and \eqref{eq:uniform-weak-derivative}.
The forward implication follows by passing smooth cylindrical approximations to} the {limit in the defining integration-by-parts identities}.
For the {converse}, {we first approximate a function by functions depending on finitely many coordinates and must then approximate each finite-dimensional function, together with its} weak {gradient, by a smooth} function.
{The following lemma provides this} finite-dimensional approximation.

\begin{lemma}[Finite-dimensional graph density]
    \label{lem:finite-dimensional-graph-density}
    Let $m\in\N$ and let either $\nu=\gamma^{\otimes m}$ on $\R^m$ or
    $\nu=\lambda^{\otimes m}$ on $[-1,1]^m$.
    Suppose that $u\in L^2_\nu$ has weak derivatives
    $\partial_ku\in L^2_\nu$, $k=1,\ldots,m$, in the sense of
    \eqref{eq:gaussian-weak-derivative} or \eqref{eq:uniform-weak-derivative}.
    Then there are $u_j\in C_b^\infty(\R^m)$ such that
    \begin{equation}
        u_j\to u
        \quad\text{in }L^2_\nu,
        \qquad
        \nabla u_j\to
        (\partial_1u,\ldots,\partial_mu)
        \quad\text{in }L^2_\nu(\R^m;\R^m).
    \end{equation}
\end{lemma}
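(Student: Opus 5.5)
I would treat the two measures separately and in each case reduce to the classical density of smooth functions in an unweighted Sobolev space, via truncation, cut-off and mollification.

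\emph{Uniform case.} Here $\nu$ is a multiple of Lebesgue measure on the cube $Q=[-1,1]^m$. The weak derivative \eqref{eq:uniform-weak-derivative} is tested against cylindrical functions compactly supported in the $k$th variable only. Still, it agrees with the usual distributional derivative on the interior $(-1,1)^m$, because every $C_c^\infty((-1,1)^m)$ function is such a test function. Hence $u\in H^1((-1,1)^m)$ in the classical sense. Since the cube is a bounded Lipschitz domain, the standard extension operator (e.g.\ Stein \cite{stein1970singular}, or simply reflection across faces) gives $Eu\in H^1(\R^m)$ with compact support. Mollifying $Eu$ produces $C_c^\infty(\R^m)\subset C_b^\infty(\R^m)$ functions converging to $u$ in $H^1((-1,1)^m)$. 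This is exactly the required convergence in $L^2_\nu$, since $\nu$ charges only the cube.

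\emph{Gaussian case.} This is the main work. First I would show that $u\in H^1_{\mathrm{loc}}(\R^m)$ with distributional gradient $(\partial_k u)_k$. Take $\varphi\in C_c^\infty(\R^m)$ and apply \eqref{eq:gaussian-weak-derivative} with $\Phi=\varphi\,e^{|y|^2/2}(2\pi)^{m/2}$. This $\Phi$ is smooth, bounded and compactly supported, hence in $C_b^\infty$. Using $\pi_k\Phi-\partial_k\Phi=-e^{|y|^2/2}(2\pi)^{m/2}\partial_k\varphi$, the identity becomes the Lebesgue integration-by-parts identity $\int \partial_k u\,\varphi\,dy=-\int u\,\partial_k\varphi\,dy$.

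Next, cut off. Let $\chi_R(y)=\chi(y/R)$ with $\chi\in C_c^\infty$, $\chi=1$ on the unit ball, and set $u_R=\chi_R u$. Then $u_R\to u$ in $L^2_\nu$ by dominated convergence. Also $\nabla u_R=\chi_R\nabla u+u\nabla\chi_R$ with $\|\nabla\chi_R\|_\infty\lesssim R^{-1}$, so $\nabla u_R\to\nabla u$ in $L^2_\nu$. Each $u_R$ lies in $H^1$ with compact support. On its support the Gaussian density is bounded above and below, so $L^2_\nu$ and Lebesgue $L^2$ norms are equivalent there. Mollifying $u_R$ at scale $\epsilon\le1$ keeps the support inside a fixed ball. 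It therefore gives $C_c^\infty$ approximants converging in $H^1$ with respect to Lebesgue measure, hence with respect to $\nu$. A diagonal choice of $R$ and then $\epsilon$ yields the sequence $u_j$.

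The main obstacle is the first Gaussian step: identifying the Gaussian-weighted weak derivative with the distributional one. The test function $e^{|y|^2/2}\varphi$ must be admissible in the class over which \eqref{eq:gaussian-weak-derivative} is required. Compact support guarantees this, so the step goes through. The remaining steps are routine.
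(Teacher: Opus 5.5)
Your proposal is correct and follows essentially the same route as the paper. In the Gaussian case you identify the weighted weak derivative with the distributional one by testing with $\varphi$ divided by the Gaussian density, then cut off, mollify on a fixed ball where the density is bounded above and below, and pass to a diagonal sequence; in the uniform case you extend from the cube and mollify. The only differences are cosmetic: your cut-off has gradient of order $R^{-1}$ rather than bounded gradient on a unit-width annulus, and Stein's extension need not be compactly supported. For the latter, either multiply by a further cut-off or note directly that $\rho_\varepsilon * Eu \in C_b^\infty(\R^m)$, as the paper does.
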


\begin{proof}
    Consider first $\nu=\gamma^{\otimes m}$.
    {We first identify the Gaussian weak derivatives with the usual distributional derivatives locally.
    We then cut off at infinity and mollify.
    The} density {of $\nu$} with respect to Lebesgue measure is
    \begin{equation}
        q_m(x)
        :=
        (2\pi)^{-m/2}\exp\left(-\frac12\abs{x}^2\right).
    \end{equation}
    Let $g_k=\partial_ku$ in the Gaussian weak sense.
    Given $\zeta\in C_c^\infty(\R^m)$, use
    $\Phi=\zeta/q_m$ in \eqref{eq:gaussian-weak-derivative}.
    Since $\partial_kq_m=-x_kq_m$, this gives
    \begin{equation}
        \int_{\R^m}g_k\zeta\dif x
        =
        -
        \int_{\R^m}u\partial_k\zeta\dif x.
    \end{equation}
    Thus $u\in W^{1,2}_{\mathrm{loc}}(\R^m)$ with distributional gradient $g=(g_1,\ldots,g_m)$.

    Choose $\chi_R\in C_c^\infty(\R^m)$ such that
    $0\leq\chi_R\leq1$, $\chi_R=1$ on the ball of radius $R$,
    $\chi_R=0$ outside the ball of radius $R+1$, and
    $\norm{\nabla\chi_R}_{L^\infty}\leq C$ independently of $R$.
    Then $\chi_Ru\in W^{1,2}(\R^m)$ has compact support and
    \begin{equation}
        \nabla(\chi_Ru)
        =
        \chi_Rg+u\nabla\chi_R.
    \end{equation}
    The Gaussian $L^2$ norm of $(1-\chi_R)u$ tends to zero, and
    \begin{equation}
        \norm{(1-\chi_R)g}_{L^2_\nu}
        +
        \norm{u\nabla\chi_R}_{L^2_\nu}
        \longrightarrow0
        \qquad
        \text{as }R\to+\infty.
    \end{equation}
    Hence $\chi_Ru\to u$ in {$L^2_\nu$ and
    $\nabla(\chi_Ru)\to g$ in $L^2_\nu(\R^m;\R^m)$}.
    For fixed $R$, standard convolution approximates $\chi_Ru$ in the unweighted $W^{1,2}(\R^m)$ norm by functions in $C_c^\infty(\R^m)$.
    On the common bounded support, the Gaussian density is bounded above and below by positive constants, so {unweighted $W^{1,2}$} convergence {implies $L^2_\nu$ convergence of both the functions and their gradients.
    Choosing $R\to+\infty$ and, for each $R$, a sufficiently accurate smooth approximation gives the sequence asserted} in the {lemma}.

    Consider now $\nu=\lambda^{\otimes m}$ and set $Q=(-1,1)^m$.
    {In this case, the weak derivatives are} the usual distributional {derivatives} on $Q$, so $u\in W^{1,2}(Q)${.
    We therefore extend $u$ from $Q$ to $\R^m$ and mollify the extension}.
    Since $Q$ is a bounded Lipschitz domain, there is a bounded linear extension operator
    \begin{equation}
        E:W^{1,2}(Q)\longrightarrow W^{1,2}(\R^m),
        \qquad
        Eu=u
        \quad\text{almost everywhere on }Q,
    \end{equation}
    by \cite[Chapter~VI, Theorem~5, p.~181]{stein1970singular}.
    Set $\widetilde u=Eu$, and let $\rho_\varepsilon$ be a standard mollifier.
    Then
    \begin{equation}
        u_\varepsilon
        :=
        \rho_\varepsilon*\widetilde u
        \in C_b^\infty(\R^m),
    \end{equation}
    and $u_\varepsilon\to\widetilde u$ in $W^{1,2}(\R^m)$ as $\varepsilon\to0$.
    Restriction to $Q$, together with $\lambda^{\otimes m}=2^{-m}\mathcal L^m\vert_Q$, gives
    {$u_\varepsilon\to u$} in {$L^2_\nu$ and
    $\nabla u_\varepsilon\to(\partial_1u,\ldots,\partial_mu)$ in $L^2_\nu(\R^m;\R^m)$}.
\end{proof}

\begin{proposition}[Coordinate characterization of $H^1_\mu$]
    \label{prop:coordinate-characterization-H1}
    Let $d\in\N\cup\{+\infty\}$ and let either
    $\mu=\gamma^{\otimes d}$ or $\mu=\lambda^{\otimes d}$.
    Then $h\in H^1_\mu(\Gamma)$ if and only if
    $h\in L^2_\mu(\Gamma)$ has weak coordinate derivatives
    $\partial_kh\in L^2_\mu(\Gamma)$ satisfying
    \begin{equation}
        \sum_{k=1}^d
        \norm{\partial_kh}_{L^2_\mu(\Gamma)}^2
        <+\infty.
    \end{equation}
    In this case,
    \begin{equation}
        \nabla h
        =
        (\partial_kh)_{k=1}^d
        \in L^2_\mu(\Gamma;\ell^2(d)).
    \end{equation}
\end{proposition}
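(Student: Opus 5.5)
The proof has two directions: the forward one passes to the limit in the integration-by-parts identities, and the converse builds smooth cylindrical approximants in two stages, first conditioning on finitely many coordinates and then invoking Lemma~\ref{lem:finite-dimensional-graph-density}.

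\emph{Forward direction.} Let $h\in H^1_\mu(\Gamma)$. By definition of the closure $\nabla$, there are $F_n\in\mathcal C^\infty_{\mathrm{cyl}}$ with $F_n\to h$ in $L^2_\mu(\Gamma)$ and $\nabla_0F_n\to\nabla h$ in $L^2_\mu(\Gamma;\ell^2(d))$. For each $k$, the $k$th component $\partial_kF_n$ converges in $L^2_\mu$ to $(\nabla h)_k$.

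Fix a test function $\Phi$: an arbitrary cylindrical function in the Gaussian case, or one whose dependence on the $k$th coordinate is compactly supported in $(-1,1)$ in the uniform case. Each $F_n$ satisfies the one-dimensional integration-by-parts identity used in the proof of Proposition~\ref{prop:closability-product-measures}. Both sides are continuous in $L^2_\mu$, because $\pi_k\Phi-\partial_k\Phi\in L^2_\mu$. Passing to the limit shows that $(\nabla h)_k$ is the weak derivative $\partial_kh$ in the sense of \eqref{eq:gaussian-weak-derivative} or \eqref{eq:uniform-weak-derivative}. Finally,
\begin{equation*}
\sum_k\norm{\partial_kh}^2_{L^2_\mu}=\norm{\nabla h}^2_{L^2_\mu(\Gamma;\ell^2(d))}<\infty .
\end{equation*}

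\emph{Converse direction, stage one (finitely many coordinates).} Let $h$ have weak derivatives with $\sum_k\norm{\partial_kh}^2<\infty$. Set $h_n:=\mathbb E[h\mid\mathcal F_n]$; then $h_n\to h$ in $L^2_\mu$ by martingale convergence. The claim to prove is that, for $k\le d_n$,
\begin{equation*}
\partial_kh_n=\mathbb E[\partial_kh\mid\mathcal F_n]\ \text{weakly, and}\qquad \partial_kh_n=0\ \text{for }k>d_n .
\end{equation*}
To verify this, test against $\Phi\in\mathcal C^\infty_{\mathrm{cyl}}$ and use the product structure. Let $\mathbb E_n$ denote conditional expectation onto $\mathcal F_n$, which integrates out the coordinates beyond $d_n$. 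For $k\le d_n$, the operator $\mathbb E_n$ commutes with $\Phi\mapsto\pi_k\Phi-\partial_k\Phi$ (respectively with $\partial_k$), and it preserves compact support in the $k$th coordinate. Self-adjointness of $\mathbb E_n$ then transfers the identity for $h$ to $h_n$. For $k>d_n$, $h_n$ does not depend on $y_k$, and $\int(\pi_k\Phi-\partial_k\Phi)\,\dif\gamma(y_k)=0$ (respectively $\int\partial_k\Phi\,\dif\lambda(y_k)=0$), so the weak derivative vanishes. Contractivity of conditional expectation and dominated convergence of the tail $\sum_{k>d_n}\norm{\partial_kh}^2\to0$ then give $\nabla h_n\to(\partial_kh)_k$ in $L^2_\mu(\Gamma;\ell^2(d))$.

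\emph{Converse direction, stage two (smoothing).} Each $h_n$ is a function $u$ of $d_n$ variables whose weak derivatives, computed in the $d_n$-dimensional product measure, coincide with the ones above. Lemma~\ref{lem:finite-dimensional-graph-density} then supplies $u_j\in C_b^\infty(\R^{d_n})$ converging to $u$ in graph norm. The functions $u_j(\pi_1,\dots,\pi_{d_n})$ are cylindrical, and they approximate $h_n$ in the graph norm of $\nabla_0$.

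A diagonal argument now gives cylindrical functions converging to $h$ with gradients converging to $(\partial_kh)_k$. Closedness of $\nabla$ yields $h\in H^1_\mu(\Gamma)$ and $\nabla h=(\partial_kh)_k$.

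The main obstacle is stage one, namely the commutation of conditional expectation with weak differentiation. It needs care with Fubini on the infinite product, in particular the step that reduces a test function on the full space to one on the first $d_n$ coordinates, together with the support condition in the uniform case. I would handle it by working directly with cylindrical $\Phi$ depending on finitely many coordinates, for which $\mathbb E_n\Phi$ is explicit.
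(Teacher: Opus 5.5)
Your proof is correct and follows essentially the same route as the paper. The forward direction passes to the limit in the integration-by-parts identities. The converse conditions on $\mathcal F_n$, identifies the weak derivatives of $\mathbb E[h\mid\mathcal F_n]$ as $\mathbb E[\partial_kh\mid\mathcal F_n]$, smooths with Lemma~\ref{lem:finite-dimensional-graph-density}, and controls the gradient error through martingale convergence plus the square-summable tail.

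Two small remarks. First, the paper sidesteps what you call the main obstacle by testing only against $\mathcal F_n$-measurable $\Phi$. That is all the finite-dimensional lemma requires, so neither the commutation of $\mathbb E_n$ with $\partial_k$ nor the claim for $k>d_n$ is needed. Second, in your gradient-convergence step, contractivity only supplies the majorant $\norm{\partial_kh}_{L^2_\mu}^2$. You should state explicitly the coordinatewise martingale convergence $\mathbb E[\partial_kh\mid\mathcal F_n]\to\partial_kh$, which dominated convergence over $k$ needs; the paper instead invokes Hilbert-valued martingale convergence for the gradient field directly.
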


\begin{proof}
    Suppose first that $h\in H^1_\mu(\Gamma)$.
    There are $F_n\in\mathcal C^\infty_{\mathrm{cyl}}$ such that
    \begin{equation}
        F_n\to h
        \quad\text{in }L^2_\mu(\Gamma),
        \qquad
        \nabla_0F_n\to\nabla h
        \quad\text{in }L^2_\mu(\Gamma;\ell^2(d)).
    \end{equation}
    For each fixed $k$, passing to the limit in the corresponding integration-by-parts identity shows that $(\nabla h)_k$ is the $k$th weak derivative of $h$.
    Square summability follows from $\nabla h\in L^2_\mu(\Gamma;\ell^2(d))$.

    Conversely, suppose that $h\in L^2_\mu(\Gamma)$ has weak derivatives
    $g_k:=\partial_kh$ and that
    \begin{equation}
        g:=(g_k)_{k=1}^d
        \in L^2_\mu(\Gamma;\ell^2(d)).
    \end{equation}
    Let $d_n=\min\{n,d\}$,
    let $\mathcal F_n=\sigma({\pi_1},\ldots,{\pi_{d_n}})$, and let
    $P_n$ denote conditional expectation onto $\mathcal F_n$.
    Then $P_nh\to h$ in $L^2_\mu(\Gamma)$.

    For $k\leq d_n$, the weak derivative of $P_nh$ in the $k$th coordinate is $P_ng_k$.
    To verify this in the Gaussian case, take an $\mathcal F_n$-measurable smooth test function $\Phi$ and compute
    \begin{align}
        \int_\Gamma P_ng_k\,\Phi\dif\mu
         & =
        \int_\Gamma g_k\Phi\dif\mu
        \nonumber \\
         & =
        \int_\Gamma h\left({\pi_k}\Phi-\partial_k\Phi\right)\dif\mu
        \nonumber \\
         & =
        \int_\Gamma P_nh\left({\pi_k}\Phi-\partial_k\Phi\right)\dif\mu.
    \end{align}
    The {calculation in the} uniform {case} is identical with the right-hand side of \eqref{eq:uniform-weak-derivative}.
    Therefore, by Lemma~\ref{lem:finite-dimensional-graph-density}, there is
    $F_n\in\mathcal C^\infty_{\mathrm{cyl}}$, depending only on the first $d_n$ coordinates, such that
    \begin{equation}
        \norm{F_n-P_nh}_{L^2_\mu(\Gamma)}
        +
        \norm{\nabla_0F_n-G^{(n)}}_{L^2_\mu(\Gamma;\ell^2(d))}
        \leq
        \frac1n,
    \end{equation}
    where
    \begin{equation}
        G^{(n)}
        :=
        (P_ng_1,\ldots,P_ng_{d_n},0,0,\ldots).
    \end{equation}

    Let $Q_n$ be the orthogonal projector in $\ell^2(d)$ onto
    $\operatorname{span}\{e_1,\ldots,e_{d_n}\}$.
    Since conditional expectation of the separable Hilbert-valued random variable $g$ is taken coordinatewise,
    \begin{equation}
        G^{(n)}
        =
        Q_n\mathbb E\left[g\,\middle\vert\,\mathcal F_n\right].
    \end{equation}
    The two {terms} in
    \begin{equation}
        G^{(n)}-g
        =
        Q_n\left(
        \mathbb E\left[g\,\middle\vert\,\mathcal F_n\right]-g
        \right)
        -
        (I-Q_n)g
    \end{equation}
    {lie pointwise in $\operatorname{Ran}(Q_n)$ and $\operatorname{Ran}(I-Q_n)$, respectively, and} are {therefore} orthogonal in $\ell^2(d)$.
    Hence
    \begin{align}
        \norm{G^{(n)}-g}_{L^2_\mu(\Gamma;\ell^2(d))}^2
         & \leq
        \norm{
            \mathbb E\left[g\,\middle\vert\,\mathcal F_n\right]-g
        }_{L^2_\mu(\Gamma;\ell^2(d))}^2
        \nonumber \\
         & \quad+
        \norm{(I-Q_n)g}_{L^2_\mu(\Gamma;\ell^2(d))}^2.
    \end{align}
    The first term tends to zero by the Hilbert-valued martingale convergence theorem, and the second tends to zero by square summability of the coordinates of $g$.
    In finite dimension, both statements are immediate once $n\geq d$.
    It follows that $F_n\to h$ in $L^2_\mu(\Gamma)$ and
    $\nabla_0F_n\to g$ in $L^2_\mu(\Gamma;\ell^2(d))$.
    Therefore $h\in H^1_\mu(\Gamma)$ and $\nabla h=g$.
\end{proof}

\subsection{Linear observations}
\label{app:linear-observables-H1}

{The following lemma applies under the standing closability assumption of Section~\ref{sec:slas} and the observation bound \eqref{eq:linear-observable-bound}.}
\begin{lemma}[Linear observations are Sobolev functions]
    \label{lem:linear-observables-H1}
    {Let} ${\mu}$ {be any Borel probability measure on $\Gamma$ satisfying these two assumptions}.
    Then, for every $v\in\ell^2(d)$, the observation ${X_v}$ belongs to $H^1_\mu(\Gamma)$ and
    \begin{equation}
        \nabla{X_v}=v.
    \end{equation}
\end{lemma}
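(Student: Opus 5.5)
The plan is to prove the claim first for finitely supported directions, where $X_a$ is a genuine linear function of finitely many coordinates, and then pass to general $v\in\ell^2(d)$ using that $\nabla$ is closed together with the observation bound \eqref{eq:linear-observable-bound}.

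\emph{Step 1: finitely supported directions.} Fix $a\in c_{00}(d)$, supported on indices $i_1,\ldots,i_k$. The map $X_a=\sum_j a_{i_j}\pi_{i_j}$ is not bounded, so it is not in $\mathcal C^\infty_{\mathrm{cyl}}$. I will approximate it by cylindrical functions $F_n:=\chi_n(X_a)$. Here $\chi_n\in C_b^\infty(\R)$ is a truncation with the following properties:
\begin{itemize}
\item $\chi_n(t)=t$ for $\abs{t}\leq n$;
\item $\abs{\chi_n(t)}\leq\abs{t}$ for all $t$;
\item $0\leq\chi_n'\leq1$.
\end{itemize}
For instance, take $\chi_n(t)=n\chi(t/n)$ for a fixed smooth bounded $\chi$ with these properties at $n=1$. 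Then $F_n=\phi_n(\pi_{i_1},\ldots,\pi_{i_k})$ with $\phi_n(x)=\chi_n(\sum_j a_{i_j}x_j)\in C_b^\infty(\R^k)$, so $F_n\in\mathcal C^\infty_{\mathrm{cyl}}$. By \eqref{eq:cylindrical-gradient}, $\nabla_0F_n=\chi_n'(X_a)\,a$.

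Since $X_a\in L^2_\mu$ by \eqref{eq:linear-observable-bound}, the following estimates hold:
\begin{itemize}
\item $\abs{F_n-X_a}\leq2\abs{X_a}\ind_{\{\abs{X_a}>n\}}$;
\item $\norm{\nabla_0F_n-a}_{\ell^2(d)}\leq\norm{a}_{\ell^2(d)}\ind_{\{\abs{X_a}>n\}}$.
\end{itemize}
Both right-hand sides tend to zero in $L^2_\mu$ by dominated convergence, because $\mu(\abs{X_a}>n)\to0$. Since $\nabla$ is the closure of $\nabla_0$, this gives $X_a\in H^1_\mu(\Gamma)$ with $\nabla X_a=a$ (the constant field). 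By linearity, this identification also shows
\[
\norm{X_a}_{H^1_\mu}^2\leq(C_\mu+1)\norm{a}_{\ell^2(d)}^2 .
\]

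\emph{Step 2: density and closedness.} For general $v\in\ell^2(d)$, let $a^{(n)}$ be the truncation of $v$ to its first $\min\{n,d\}$ coordinates, so $a^{(n)}\to v$ in $\ell^2(d)$. By \eqref{eq:linear-observable-bound} applied to the differences $a^{(n)}-a^{(m)}$, the sequence $X_{a^{(n)}}$ is Cauchy in $L^2_\mu$, and its limit is $X_v$ by definition \eqref{eq:linear-observation-functional}. The gradients $\nabla X_{a^{(n)}}=a^{(n)}$ are constant fields, and they converge to the constant field $v$ in $L^2_\mu(\Gamma;\ell^2(d))$ because $\mu$ is a probability measure. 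Since $\nabla$ is closed, $X_v$ lies in its domain $H^1_\mu(\Gamma)$ and $\nabla X_v=v$. When $d<\infty$, Step 2 is vacuous.

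\emph{Main obstacle.} The only delicate point is Step 1: the linear function is unbounded, so it is not itself a cylindrical test function. The issue is handled entirely by the smooth truncation and dominated convergence, which needs only the square integrability of $X_a$ supplied by \eqref{eq:linear-observable-bound}. Constant vector fields lie in $L^2_\mu(\Gamma;\ell^2(d))$ because $\mu$ is a probability measure.
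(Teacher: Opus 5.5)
Your proposal is correct and follows essentially the same route as the paper. The paper's truncation $\theta_R(t)=\int_0^t\eta_R(s)\,ds$ with a cutoff $\eta_R\in C_c^\infty(\R)$ is exactly your $\chi_n$. Both arguments then use dominated convergence in the graph norm to get $\nabla X_a=a$ for $a\in c_{00}(d)$, and closedness of $\nabla$ together with \eqref{eq:linear-observable-bound} to pass to general $v\in\ell^2(d)$.
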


\begin{proof}
    Let $a\in c_{00}(d)$.
    By \eqref{eq:linear-observable-bound}, ${X_a}\in L^2_\mu(\Gamma)$.
    Choose $\eta_R\in C_c^\infty(\R)$ such that
    $0\leq\eta_R\leq1$ {and} $\eta_R=1$ on $[-R,R]$.
    Define
    \begin{equation}
        \theta_R(t)
        :=
        \int_0^t\eta_R(s)\dif s,
        \qquad
        F_R
        :=
        \theta_R{(X_a)}.
    \end{equation}
    Then $F_R\in\mathcal C^\infty_{\mathrm{cyl}}$,
    $\abs{F_R}\leq\abs{{X_a}}$, and
    \begin{equation}
        \nabla_0F_R
        =
        \eta_R({X_a})a.
    \end{equation}
    {Because $\eta_R=1$ on $[-R,R]$, both
    $F_R\to X_a$ and $\nabla_0F_R\to a$ pointwise.
    Moreover,}
    \begin{equation}
        {\abs{F_R-X_a}^2
        \leq
        4\abs{X_a}^2,}
    \end{equation}
    {so dominated} convergence gives $F_R\to {X_a}$ in $L^2_\mu(\Gamma)${.
    For the gradients},
    \begin{align}
        {\norm{\nabla_0F_R-a}}_{{L^2_\mu(\Gamma;\ell^2(d))}^2}
        & {=
        \norm{a}_{\ell^2(d)}^2
        \norm{\eta_R(X_a)-1}_{L^2_\mu(\Gamma)}^2
        \longrightarrow 0,}
    \end{align}
    {because $\eta_R(X_a)-1$ converges pointwise to zero and is bounded in absolute value by $1$}.
    Thus ${X_a}\in H^1_\mu(\Gamma)$ and
    $\nabla{X_a}=a$.

    For $v\in\ell^2(d)$, let
    \begin{equation}
        v^{(n)}
        :=
        \sum_{m=1}^{\min\{n,d\}}v_me_m.
    \end{equation}
    Then ${X_{v^{(n)}}}\to {X_v}$ in $L^2_\mu(\Gamma)$ and
    \begin{equation}
        \nabla{X_{v^{(n)}}}
        =
        v^{(n)}
        \longrightarrow
        v
        \quad\text{in }\ell^2(d).
    \end{equation}
    Closedness of $\nabla$ proves the claim.
\end{proof}

\subsection{Gaussian subspace conditional Poincar\'e inequality}
\label{app:gaussian-conditional-poincare}

We prove Proposition~\ref{prop:gaussian-conditional-poincare} directly on the Gaussian Sobolev space $H^1_\mu(\Gamma)$.
The argument applies to both finite and infinite Gaussian {product measures}.

\begin{proof}[Proof of Proposition~\ref{prop:gaussian-conditional-poincare}]
    {By \eqref{eq:linear-observation-functional}, for every} $a\in\ell^2(d)$, {the observation} ${X_a}$ is the $L^2_\mu$ limit of finite linear combinations of the coordinate variables,
    which are independent and standard Gaussian.
    For $a_1,\ldots,a_m\in\ell^2(d)$, simultaneous coordinate truncation therefore gives centered Gaussian vectors converging in $L^2_\mu(\Gamma;\R^m)$ to
    \begin{equation}
        ({X_{a_1}},\ldots,{X_{a_m}}).
    \end{equation}
    Their covariance matrices converge to the Gram matrix
    $(\inp{a_i}{a_j}_{\ell^2(d)})_{i,j=1}^m$.
    Convergence of the corresponding characteristic functions shows that the limiting vector is centered Gaussian{.
    In particular},
    \begin{equation}
        \mathbb E\left[{X_aX_b}\right]
        =
        \inp{a}{b}_{\ell^2(d)}.
    \end{equation}
    Thus ${\{X_a:a\in\ell^2(d)\}}$ is a centered Gaussian process whose covariance is the $\ell^2(d)$ inner product.

    Let $\Pi=VV^\adj$, where $V:\R^r\to\ell^2(d)$ is an isometry with columns $v_1,\ldots,v_r$, and recall that
    \begin{equation}
        {X_V}
        =
        ({X_{v_1}},\ldots,{X_{v_r}}).
    \end{equation}

    We first prove the inequality for $h\in\mathcal C^\infty_{\mathrm{cyl}}$.
    {Choose distinct coordinate indices $i_1,\ldots,i_m$ and $\phi\in C_b^\infty(\R^m)$ representing $h$, and let $E:\R^m\to\ell^2(d)$ be the isometry with columns $e_{i_1},\ldots,e_{i_m}$.}
    Then
    \begin{equation}
        h
        =
        \phi({X_E}),
        \qquad
        {X_E}
        =
        ({X_{e_{i_1}}},\ldots,{X_{e_{i_m}}}).
    \end{equation}
    Define $B\in\R^{m\times r}$ by ${B:=E^\adj V}$, that is,
    \begin{equation}
        B_{kj}
        :=
        \inp{e_{i_k}}{v_j}_{\ell^2(d)}.
    \end{equation}
    The pair $({X_E,X_V})$ is jointly centered Gaussian, with
    \begin{equation}
        \mathbb E[{X_E X_E^\adj}]=I_m,
        \qquad
        \mathbb E[{X_V X_V^\adj}]=I_r,
        \qquad
        \mathbb E[{X_E X_V^\adj}]=B.
    \end{equation}
    Set
    \begin{equation}
        R:={X_E-BX_V}.
    \end{equation}
    Then
    \begin{equation}
        \mathbb E[{R X_V^\adj}]=0,
        \qquad
        \mathbb E[RR^\adj]=I_m-BB^\adj=:\Sigma.
    \end{equation}
    The vectors $R$ and ${X_V}$ are jointly Gaussian and uncorrelated, hence independent.
    Therefore, conditionally on ${X_V=z}$, the vector ${X_E}$ has the Gaussian law
    \begin{equation}
        \mathcal N(Bz,\Sigma).
    \end{equation}

    Let $A=\Sigma^{1/2}$ and let $\xi\sim\mathcal N(0,I_m)$.
    {Applying the} finite-dimensional Gaussian Poincar\'e inequality
    {\cite[Theorem~5.5.11]{bogachev1998gaussian}} to
    $\xi\mapsto\phi(Bz+A\xi)$ gives
    \begin{align}
        \operatorname{Var}\left(
        \phi({X_E})\,\middle\vert\,{X_V}=z
        \right)
         & \leq
        \mathbb E\left[
            \nabla\phi({X_E})^\adj
            \Sigma
            \nabla\phi({X_E})
            \,\middle\vert\,
            {X_V}=z
            \right].
    \end{align}
    This argument also covers singular $\Sigma$.
    Since $\mathcal G_V$ is the $\mu$-completion of $\sigma({X_V})$, the conditional-variance identity gives
    \begin{equation}
        \norm{
            h-\mathbb E\left[h\,\middle\vert\,\mathcal G_V\right]
        }_{L^2_\mu(\Gamma)}^2
        =
        \mathbb E\left[
            \operatorname{Var}\left(h\,\middle\vert\,{X_V}\right)
            \right].
    \end{equation}
    Integrating the preceding pointwise inequality with respect to the law of ${X_V}$ therefore yields
    \begin{align}
        \norm{
            h-\mathbb E\left[h\,\middle\vert\,\mathcal G_V\right]
        }_{L^2_\mu(\Gamma)}^2
         & \leq
        \int_\Gamma
        \nabla\phi({X_E(\yb)})^\adj
        (I_m-BB^\adj)
        \nabla\phi({X_E(\yb)})
        \dif\mu(\yb).
        \label{eq:gaussian-conditional-core-bound}
    \end{align}
    On the other hand,
    \begin{equation}
        \nabla_0h
        =
        \sum_{k=1}^m
        \partial_k\phi({X_E})e_{i_k},
    \end{equation}
    and
    \begin{equation}
        \left(V^\adj\nabla_0h\right)_j
        =
        \sum_{k=1}^m
        B_{kj}\partial_k\phi({X_E}).
    \end{equation}
    Since $\Pi=VV^\adj$ is an orthogonal projector,
    \begin{align}
        \norm{(I-\Pi)\nabla_0h}_{\ell^2(d)}^2
         & =
        \norm{\nabla_0h}_{\ell^2(d)}^2
        -
        \norm{V^\adj\nabla_0h}_{\R^r}^2
        \nonumber \\
         & =
        \nabla\phi({X_E})^\adj
        (I_m-BB^\adj)
        \nabla\phi({X_E}).
    \end{align}
    Combining this identity with \eqref{eq:gaussian-conditional-core-bound} proves
    \begin{equation}
        \norm{
            h-\mathbb E\left[h\,\middle\vert\,\mathcal G_V\right]
        }_{L^2_\mu(\Gamma)}^2
        \leq
        \norm{(I-\Pi)\nabla_0h}_{L^2_\mu(\Gamma;\ell^2(d))}^2
    \end{equation}
    for every $h\in\mathcal C^\infty_{\mathrm{cyl}}$.

    Let now $h\in H^1_\mu(\Gamma)$.
    By definition of the closed gradient, there are
    $h_n\in\mathcal C^\infty_{\mathrm{cyl}}$ such that
    \begin{equation}
        h_n\to h
        \quad\text{in }L^2_\mu(\Gamma),
        \qquad
        \nabla_0h_n\to\nabla h
        \quad\text{in }L^2_\mu(\Gamma;\ell^2(d)).
    \end{equation}
    Conditional expectation is an $L^2$ contraction, so
    \begin{equation}
        h_n-\mathbb E\left[h_n\,\middle\vert\,\mathcal G_V\right]
        \longrightarrow
        h-\mathbb E\left[h\,\middle\vert\,\mathcal G_V\right]
        \quad\text{in }L^2_\mu(\Gamma).
    \end{equation}
    Moreover,
    \begin{equation}
        (I-\Pi)\nabla_0h_n
        \longrightarrow
        (I-\Pi)\nabla h
        \quad\text{in }L^2_\mu(\Gamma;\ell^2(d)).
    \end{equation}
    Passing to the limit proves \eqref{eq:conditional-poincare} with $C_P(\Pi)=1$.
\end{proof}

\section{Additional numerical results}
\label{app:numerical-results}

\subsection{Joint marginals for the rare-activation Hermite model}
\label{app:numerics-hermite-joint-marginals}

Figure~\ref{fig:numerics-hermite-joint-marginals} shows the $(y_5,y_7)$
marginals at $r=7$ for the model introduced in
Section~\ref{subsec:numerics-hermite-rare-activation}.
The base and $\theta$-kernel diagonal measures factorize across coordinates,
whereas the {regularized} kernel diagonal measure generally does not.
For the {regularized} kernel diagonal measure, the dependence is induced by its
nonfactorized Hermite coefficients.
The {regularized} Christoffel measure is a mixture that selects one normalized
gradient coordinate at a time, while the remaining coordinates retain their
Gaussian law; this produces its horizontal and vertical features.
The joint marginals therefore exhibit the different ways in which the
sampling measures redistribute probability across coordinates.

\begin{figure}[H]
    \centering
    \includegraphics[width=\textwidth]{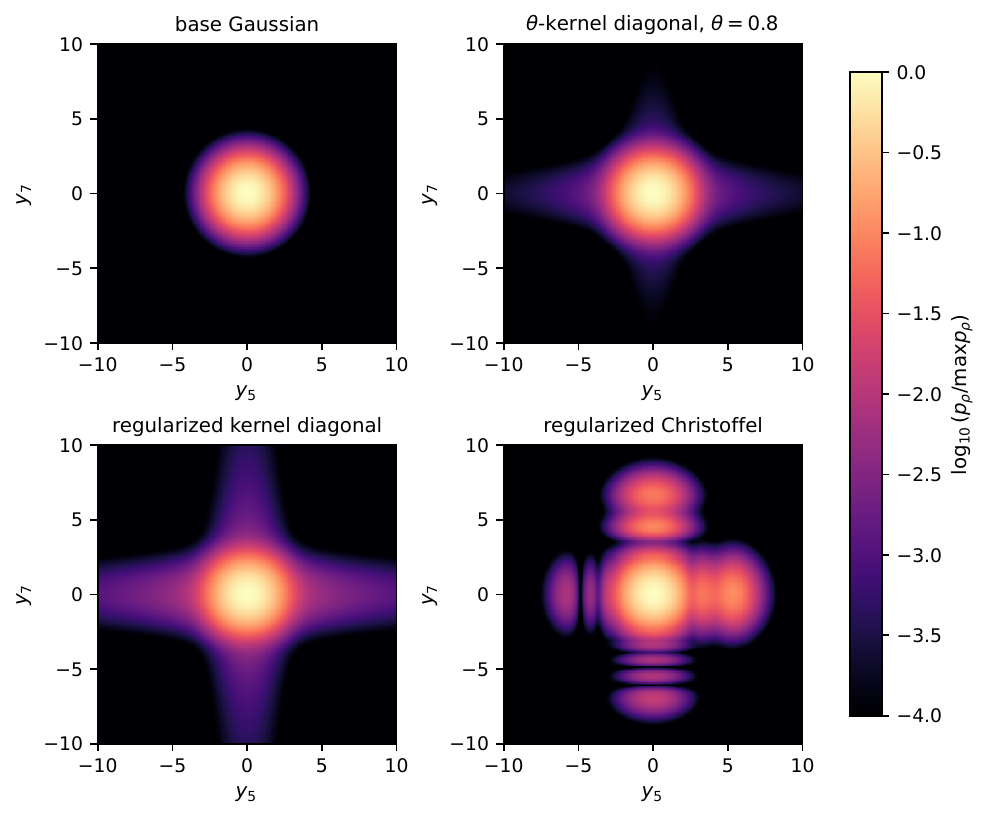}
    \caption{
        Joint $(y_5,y_7)$-marginals at $r=7$ for the base Gaussian, the
        $\theta$-kernel diagonal measure with $\theta=0.8$, the {regularized}
        kernel diagonal measure, and the {regularized} Christoffel measure.
        For each sampling measure $\rho$, let $p_\rho$ denote the Lebesgue
        density of its $(y_5,y_7)$-marginal.
        Colors show
        $\log_{10}\left(p_\rho/\norm{p_\rho}_{L^\infty}\right)$, truncated to
        $[-4,0]$.
        All four panels use the fixed box $[-10,10]^2$.
    }
    \label{fig:numerics-hermite-joint-marginals}
\end{figure}

\subsection{Spatial modes for the lognormal elliptic model}
\label{app:numerics-lognormal-haar-modes}

Let $V^{(s)}$ contain the leading eigenvectors of the reference gradient
covariance $C_{\mathrm{ref}}$ for the model in
Section~\ref{subsec:numerics-lognormal-haar-modes}.
The corresponding reference active spatial modes are
\begin{equation*}
    w_m^{(s)}(x)
    :=
    \sum_{(\ell,k)\in\mathcal I_L}
    V^{(s)}_{(\ell,k),m}\psi_{\ell,k}(x).
\end{equation*}
Figure~\ref{fig:numerics-lognormal-haar-spatial-modes} compares the first four
reference active spatial modes with the first four coordinate Haar functions.
The leading reference active mode remains close to the coarsest Haar pattern
for both values of $s$, whereas the following modes combine several dyadic
levels.
Thus, the reference active directions are not merely a reordering of the
coordinate directions.

\begin{figure}[H]
    \centering
    \includegraphics[width=\textwidth]{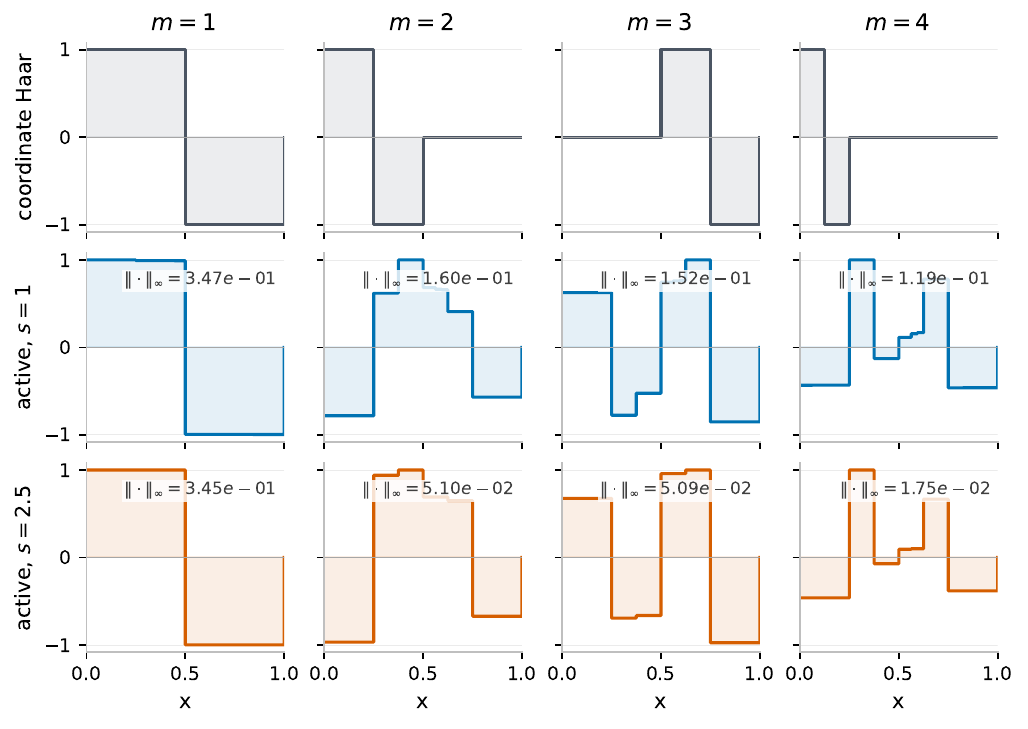}
    \caption{
        Spatial modes for the lognormal Haar benchmark.
        The top row shows the first four coordinate Haar functions, and the
        lower rows show the first four reference active modes for $s=1$ and
        $s=2.5$.
        Each curve is normalized by its spatial supremum norm, and the active-mode
        annotations report the norm before normalization.
    }
    \label{fig:numerics-lognormal-haar-spatial-modes}
\end{figure}

\end{document}